\renewenvironment{abstract}{\bf\small {\em\ Abstract---}}{}
\newtheorem{proposition}{Proposition}
\newtheorem{theorem}{Theorem}
\newtheorem{definition}{Definition}
\newtheorem{corollary}{Corollary}
\newtheorem{lemma}{Lemma}
\DeclareMathOperator*{\argmin}{arg\,min}
\def\ran{\mathrm{ran}}
\def\dist{\mathrm{dist}}
\def\Jac{\mathrm{Jac}}
\def\Re{\mathrm{Re}}
\def\Id{\mathrm{Id}}
\def\md{\mathrm{md}}
\def\R{\mathbb{R}}
\def\Nbb{\mathbb{N}}
\def\Z{\mathbb{Z}}
\def\C{\mathbb{C}}
\def\Nc{\mathcal{N}}
\def\Rc{\mathcal{R}}
\def\f12{\frac{1}{2}}
\def\ind{\mathds{1}}
\def\eqdef{\stackrel{\mathrm{def}}{=}}
\def\E{\mathbb{E}}
\title{Spurious minimizers in non uniform Fourier sampling optimization}
\author{Alban Gossard\textsuperscript{1,2} \\ \footnotesize \href{mailto:alban.paul.gossard@gmail.com}{alban.paul.gossard@gmail.com}
\and Frédéric de Gournay\textsuperscript{1,3} \\ \footnotesize \href{mailto:degourna@insa-toulouse.fr}{degourna@insa-toulouse.fr}
\and Pierre Weiss\textsuperscript{1,2} \\ \footnotesize \href{mailto:pierre.armand.weiss@gmail.com}{pierre.armand.weiss@gmail.com}
\and \footnotesize \textsuperscript{1} Institut de Math\'ematiques de Toulouse; UMR5219; Universit\'e de Toulouse; CNRS
\and \footnotesize \textsuperscript{2} Université de Toulouse, F-31062 Toulouse Cedex 9, France
\and \footnotesize \textsuperscript{3} INSA, F-31077 Toulouse, France
}
\date{\today}
\begin{document}
\maketitle

\begin{abstract}
A recent trend in the signal/image processing literature is the optimization of Fourier sampling schemes for specific datasets of signals.
In this paper, we explain why choosing optimal non Cartesian Fourier sampling patterns is a difficult nonconvex problem by bringing to light two optimization issues.
The first one is the existence of a combinatorial number of spurious minimizers for a generic class of signals.
The second one is a vanishing gradient effect for the high frequencies.
We conclude the paper by showing how using large datasets can mitigate the first effect and illustrate experimentally the benefits of using stochastic gradient algorithms with a variable metric.
\end{abstract}

\begin{figure*}[h]
\centering
\begin{subfigure}[b]{0.325\textwidth}
	\includegraphics[width=\linewidth]{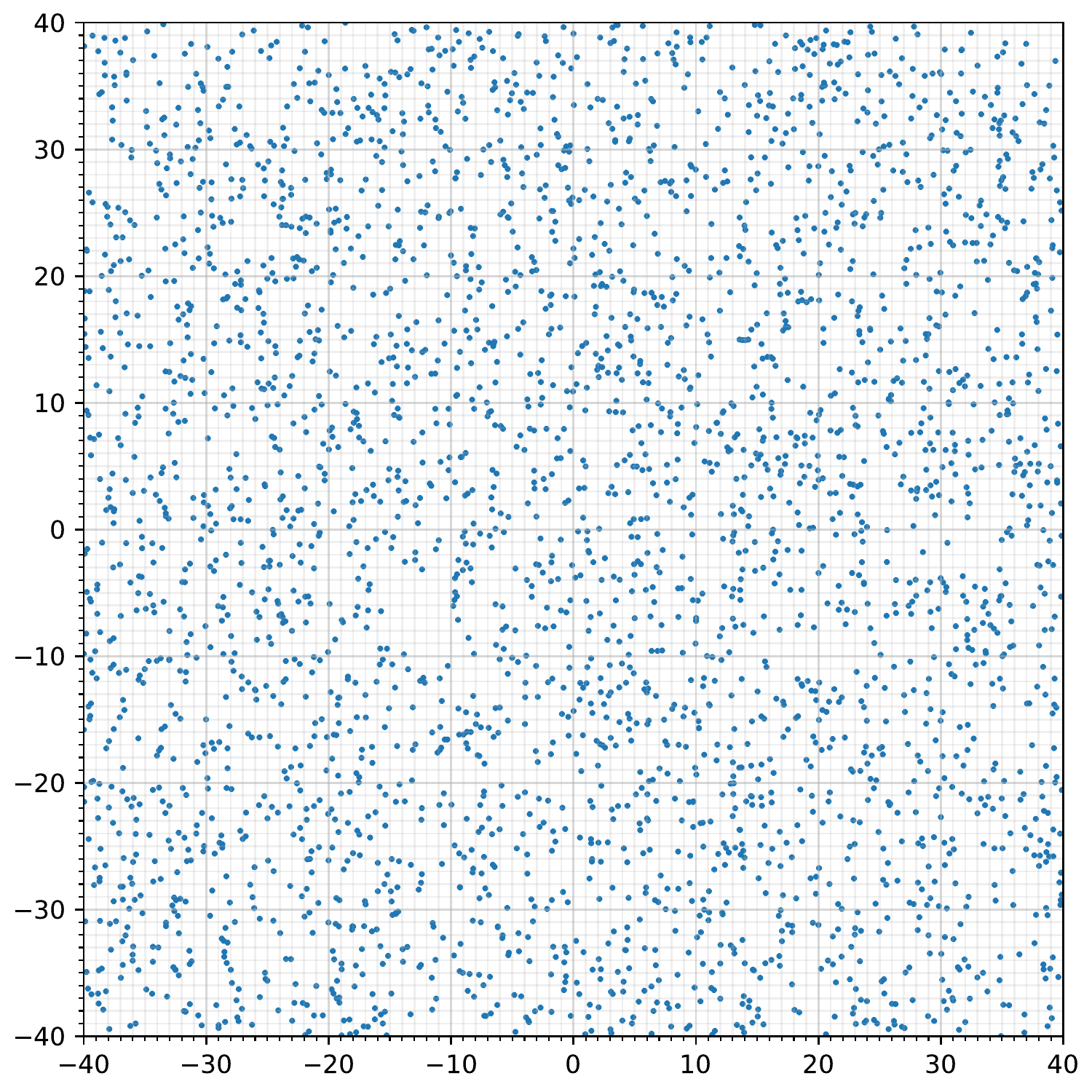}
	\caption{Initial}
\end{subfigure}
\begin{subfigure}[b]{0.325\textwidth}
	\includegraphics[width=\linewidth]{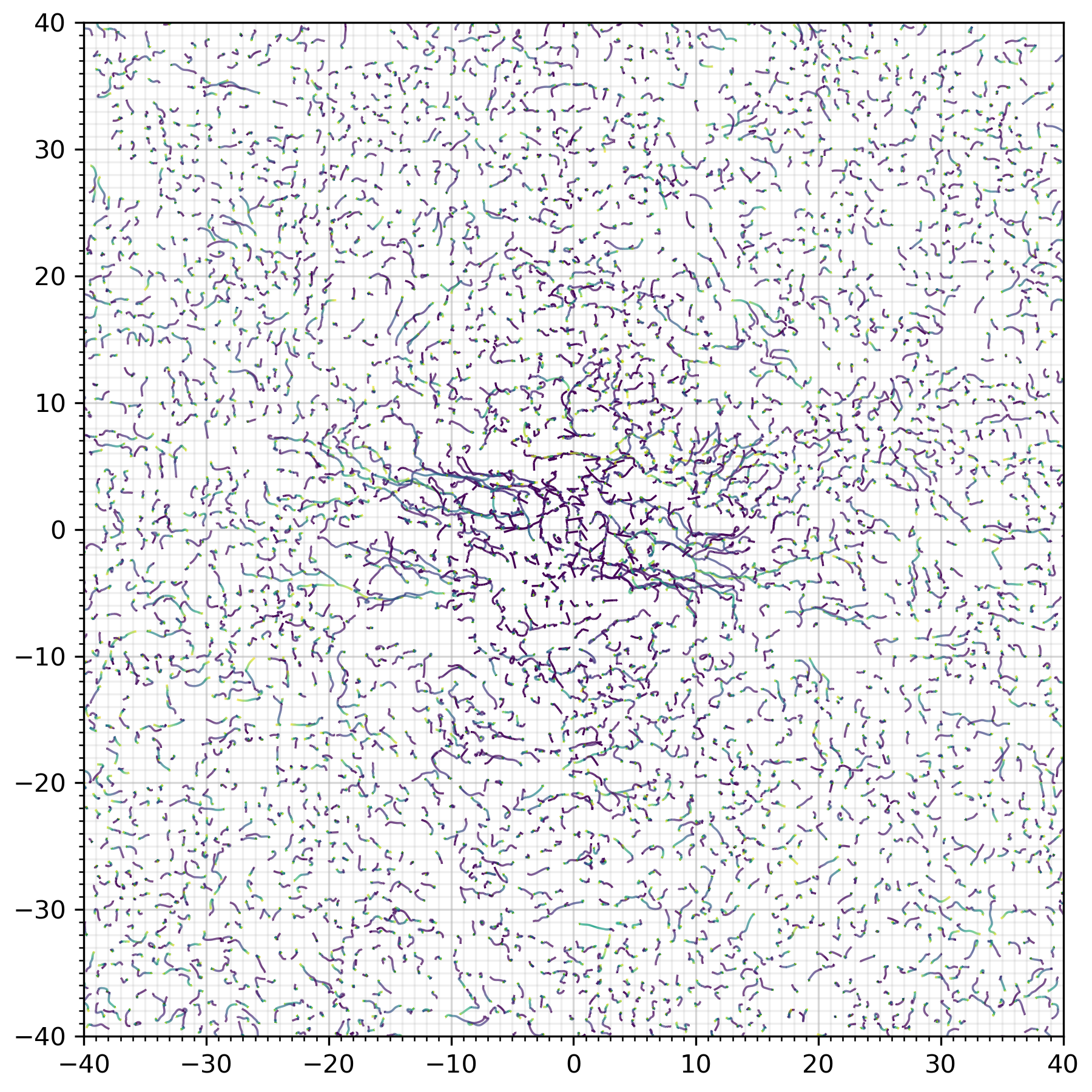}
	\caption{Trajectory over $10^7$ iterations \label{fig:traj_ini_2D}}
\end{subfigure}
\begin{subfigure}[b]{0.325\textwidth}
	\includegraphics[width=\linewidth]{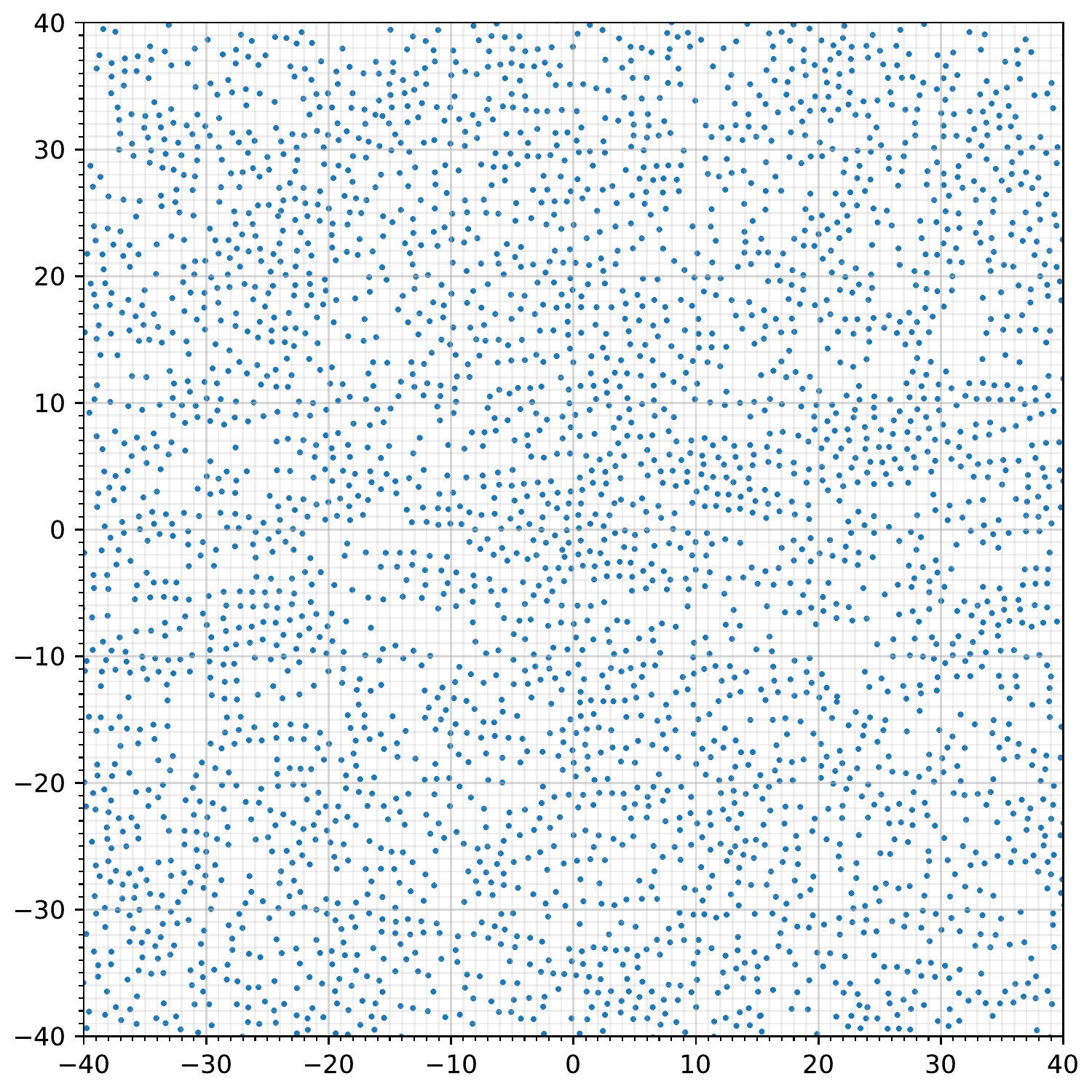}
	\caption{Final}
\end{subfigure}
\caption{A typical sampling optimization trajectory. Starting from the sampling configuration on the left (uniform point process), we obtain the sampling scheme on the right after $10^7$ iterations. The trajectory in the center corresponds to the $10^7$ iterations of a gradient descent with fixed step size. Notice that the points clusters have disappeared, but that the scheme is still essentially uniform, while we would expect the low frequencies to be sampled more densely.}
\label{fig:illustrations_2D}
\end{figure*}

\section{Introduction}

Finding efficient Fourier sampling schemes is a critical issue in communications and imaging. 
This led to various theories including the celebrated Shannon-Nyquist theorems for bandlimited signals and compressed sensing for sparse signals.
Unfortunately - in most practical cases - the signals to reconstruct are quite loosely described by these generic classes. 
For instance, magnetic resonance images of brains or knees have a rich structure due to the underlying object.
It is therefore tempting to optimize a sampling scheme directly for a given dataset rather than relying on a rough mathematical model.
The recent progresses in Graphical Processing Units (GPU) programming, automatic differentiation and machine learning make this idea even more tantalizing.
In the sole field of Magnetic Resonance Imaging (MRI), the following list of references \cite{gozcu2018learning,jin2019self,sherry2019learning,bahadir2019learning,zibetti2021fast,wang2021b,weiss2019pilot,wang2021b,gossard2020off,loktyushin2021mrzero,peng2022learning,aggarwal2020j} illustrates this novel trend.

Unfortunately, most of the above works report (more or less explicitly) optimization issues. Fig.~\ref{fig:illustrations_2D} illustrates one of them. In this example, we tried to optimize a sampling scheme for a single image from the fastMRI challenge \cite{zbontar2018fastmri}. To this end, we minimize the $\ell^2$ reconstruction error using a simple back-projection reconstructor with a subsampling factor of $2$. The trajectory of a gradient descent is displayed in Fig.~\ref{fig:traj_ini_2D}. As can be seen, the final sampling set covers approximately uniformly the Fourier domain, while we would expect the low frequencies to be sampled more densely. This likely highlights the presence of a spurious minimizer.

The aim of this paper is to explain this phenomenon from a mathematical perspective and to bring some solutions to mitigate the difficulties.
We focus on linear reconstruction methods, which simplifies the analysis and we highlight the critical role of the non uniform Fourier transform as an oscillation generator.
We expect that some of the arguments can be reused for more complex nonlinear reconstruction methods, which suffer from the same experimental issues.
We also focus on optimization schemes that continuously optimize the positions of some sampling locations.
These techniques have the advantage of not relying on a grid, which is an essential feature for various applications such as magnetic resonance imaging or radio-interferometry.
In addition, they spark the hope of avoiding the curse of dimensionality encountered in combinatorial problems.
We show that this dream is not realistic, but that the situation improves by considering large signals datasets and specific variable metric techniques.
We conclude the paper by illustrating our findings on 1D experiments.

\section{Notation}

In this paper, we will focus on discrete 1D signals, for the ease of exposition.
However, the main arguments apply to arbitrary dimensions and continuous signals as well.

We consider a signal $u$ as a vector of $\C^N$ with $N\in 2\Nbb$. 
We let $\mathcal{N}=\left\llbracket-\frac{N}{2},\frac{N}{2}-1\right\rrbracket$. 
An alternative way to represent a signal $u\in\C^N$ is to use a discrete measure $\mu$ of the form:
\begin{equation}\label{eq:def_mu}
\mu=\sum_{n\in \Nc} u_n \delta_{\frac n N}.
\end{equation}
Given a location $\xi\in\R$, we define:
\begin{equation}
\hat u(\xi)\eqdef\frac{1}{\sqrt N}\sum_{n\in \Nc} u_n e^{-2\iota\pi\left\langle \xi,\frac n N \right\rangle},
\end{equation}
which can be seen as the continuous Fourier transform of the measure $\mu$.
We consider $\Xi=[\xi_1,\hdots,\xi_M] \in \R^{M}$ a set of $M$ locations. The Fourier transform $\hat u (\Xi)\in \C^M$ at the locations $\Xi$ can be written as a matrix-vector product of the form $\hat u(\Xi)=A(\Xi)^* u$ with the normalized Vandermonde matrix $A(\Xi)\in\C^{N\times M}$ defined by
\begin{equation*}
    A(\Xi)_{n,m} \eqdef \frac 1{\sqrt N} e^{2\iota\pi\left\langle \xi_m,\frac{n}{N} \right\rangle}.
\end{equation*}
In what follows, we let $a(\xi)\in \C^N$ denote the vector defined for all $n\in \Nc$ by 
\begin{equation*}
    a(\xi)[n] \eqdef \frac 1{\sqrt N} e^{2\iota\pi\left\langle \xi,\frac n {N}\right\rangle},
\end{equation*}
so that 
\begin{equation*}
    A(\Xi) = [a(\xi_1),\hdots, a(\xi_M)].
\end{equation*}
The matrix $A(\Xi)^*$ can be seen as the nonuniform Fourier transform \cite{oppenheim1971computation} from the grid to the set of sampling locations $\Xi$.
We let $(A(\Xi)^*)^+$ denote the pseudo-inverse of $A(\Xi)^*$.

\section{Preliminaries}\label{sec:preliminaries}

Below, we first describe the precise mathematical setting and then turn to some preliminary results.

\subsection{The setting}

Let $u\in \C^N$ denote a signal. 
We assume that a sampling device allows to pick $M$ frequencies $\xi_1, \hdots, \xi_M$ in $\R$, yielding the set of measurements $y=A(\Xi)^*u+w$ with $w\sim\Nc(0,\sigma^2\Id)$ a white Gaussian noise.
A vast amount of reconstruction techniques have been designed in the literature to reconstruct $u$ from $y$.
A generic reconstructor can be defined as a mapping $\Rc:(\C^M\times \R^M)\to \C^N$ that takes as an input a measurement $y\in \C^M$ and a sampling scheme $\Xi\in \R^M$ and outputs a reconstructed signal $\Rc(y,\Xi)$.
Given a collection of signals $u_1,\hdots, u_P$ and a reconstructor $\Rc$, a natural framework to find the best sampling scheme $\Xi$ is to solve the following optimization problem:
\begin{equation}\label{eq:function_sum}
    \inf_{\Xi\in \R^M} \frac{1}{2P}\sum_{p=1}^P \E_w( \|\Rc(A(\Xi)^*u_p +w,\Xi) - u_p\|_2^2 ).
\end{equation}
This problem can be attacked with first order methods that continuously optimize the sampling locations $\xi_m$, see for instance \cite{weiss2019pilot,gossard2020off,wang2021b}.
In this work, we will concentrate on three simple linear reconstruction methods of the form
$\Rc(y,\Xi)=R(\Xi)y$:
\paragraph{The back-projection method} which consists in defining the reconstructor as $R_1(\Xi)=A(\Xi)$ or
	\begin{equation}\label{eq:recon_adj}
	    \Rc_1(y,\Xi)\eqdef A(\Xi)y.
	\end{equation}
\paragraph{The pseudo-inverse method} where the reconstructor is defined with $R_2(\Xi)=\left(A(\Xi)^*\right)^+$ or
	\begin{equation}\label{eq:recon_inv}
	    \Rc_2(y,\Xi)\eqdef \left(A(\Xi)^*\right)^+ y.
	\end{equation}
\paragraph{The Tikhonov method} (or regularized inverse) which consists in solving the following quadratic problem:
	\begin{equation}\label{eq:recon_tik}
	    \Rc_3(y,\Xi) \eqdef (1+\lambda)\argmin_{f\in\C^N} \f12\| A(\Xi)^*f - y\|_2^2+\frac\lambda 2 \|f\|_2^2
	\end{equation}
    for $\lambda>0$. Hence 
    \begin{equation}
        R_3(\Xi)=(1+\lambda)\left(A(\Xi)A(\Xi)^*+\lambda\Id\right)^{-1}A(\Xi).
    \end{equation}
    The multiplication by $(1+\lambda)$ is there to compensate the bias introduced by the regularization and will later simplify the expressions. 
A similar analysis can be carried out for the more standard solver $R_3(\Xi)=\left(A(\Xi)A(\Xi)^*+\lambda\Id\right)^{-1}A(\Xi)$, but it leads to significantly more complicated formulas, which we prefer avoiding for the sake of readability.

These techniques are quite popular in the actual practice.
We restrict our analysis to linear reconstructors of the type \eqref{eq:recon_adj}, \eqref{eq:recon_inv} and \eqref{eq:recon_tik} for simplicity reasons.
Note that \eqref{eq:recon_inv} corresponds to the limit case of \eqref{eq:recon_tik} when $\lambda$ tends to zero. Numerical experiments reveal that the optimization issues raised in Theorems~\ref{thm:main1} and \ref{thm:gradient_flatness} also apply to nonlinear reconstructors such as sparsity promoting convex penalties.
However, the techniques used in the proofs do not directly extend to this framework.

We first analyze the problem with a single image $u$ in the dataset, i.e. $P=1$.
Let us define three cost functions $J_1$, $J_2$ and $J_3$ which respectively correspond to the back-projection, the pseudo-inverse and the regularized inverse.
\begin{definition}[Cost function]
    Given a signal $u$, a sampling scheme $\Xi$ and a reconstruction method $R(\Xi)$, the cost function reads
    \begin{equation}\label{eq:def_J}
        J(\Xi) \eqdef \E_w\left( \f12\left\| R(\Xi)(A(\Xi)^*u+w)-u \right\|_2^2 \right)
    \end{equation}
    where $w\sim \Nc\left(0, \sigma^2\Id\right)$ is white Gaussian noise.
\end{definition}

\subsection{Elementary observations}\label{sec:elementary_observations}

We will make use of the following definitions.
\begin{definition}[The min distance]

Given a set of sampling points $\Xi$, the min distance $\md(\Xi)$ is defined by 
\begin{equation*}
\md(\Xi) \eqdef \min_{m\neq m'} \dist(\xi_m, \xi_{m'})
\end{equation*}
where $\dist$ is the distance on the torus defined for $(\xi_1,\xi_2) \in \R^2$ as
\begin{equation}
\dist(\xi_1,\xi_2) \eqdef \inf_{k\in\Z} \|\xi_1 - \xi_{2}-kN\|_\infty.
\end{equation}
\end{definition}

\begin{definition}[Subgrid]
Throughout the paper, we say that $\Xi \in [-N/2,N/2[^M$ is a \emph{subgrid} if $\xi_m-\xi_{m'}\in \Z^*$ for all $m\neq m'$.
\end{definition}

\begin{proposition}[$J$ is $N$-periodic]
We have 
\begin{equation}
J(\Xi \mod N) = J(\Xi).
\end{equation}
\end{proposition}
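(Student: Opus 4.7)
The plan is to trace the $N$-periodicity all the way up from the elementary atom $a(\xi)$ to the full cost $J(\Xi)$.

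First, I would observe that for every $n \in \mathcal{N} \subset \mathbb{Z}$ and every integer $k$,
\begin{equation*}
a(\xi + kN)[n] = \frac{1}{\sqrt{N}} e^{2\iota\pi (\xi+kN)\frac{n}{N}} = \frac{1}{\sqrt{N}} e^{2\iota\pi \xi \frac{n}{N}} e^{2\iota\pi k n} = a(\xi)[n],
\end{equation*}
because $kn \in \mathbb{Z}$. Hence $\xi \mapsto a(\xi)$ is $N$-periodic coordinatewise. This is really the only nontrivial step, and it hinges on the fact that the signal samples live on the integer grid $\mathcal{N}$, so the exponentials $e^{2\iota\pi k n}$ all collapse to $1$.

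Next I would lift this periodicity to $A(\Xi)$. Writing $\Xi' = \Xi \bmod N$, the columns of $A(\Xi')$ and $A(\Xi)$ are equal by the previous step, so $A(\Xi') = A(\Xi)$ and in particular $A(\Xi')^* = A(\Xi)^*$. Since each of the three reconstruction operators $R_1(\Xi) = A(\Xi)$, $R_2(\Xi) = (A(\Xi)^*)^+$ and $R_3(\Xi) = (1+\lambda)(A(\Xi)A(\Xi)^* + \lambda \Id)^{-1}A(\Xi)$ depends on $\Xi$ only through $A(\Xi)$, we obtain $R(\Xi') = R(\Xi)$ for all three choices.

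Finally, plugging into the definition \eqref{eq:def_J}, the random variable $R(\Xi)(A(\Xi)^*u + w) - u$ is unchanged when $\Xi$ is replaced by $\Xi \bmod N$ (the distribution of $w$ does not depend on $\Xi$), so its expected squared norm is too, giving $J(\Xi \bmod N) = J(\Xi)$. I expect no real obstacle here: everything reduces to the observation that shifting a sample frequency by an integer multiple of $N$ leaves all the Vandermonde exponentials invariant because we evaluate them at integer indices.
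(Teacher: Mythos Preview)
Your proof is correct and follows exactly the same approach as the paper: the paper's one-line proof simply records that $a(\xi+kN)=a(\xi)$ and declares the result a consequence. Your write-up is a more explicit unpacking of that same observation, tracing the periodicity from $a(\xi)$ through $A(\Xi)$ and $R(\Xi)$ to $J(\Xi)$.
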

\begin{proof}
Let $n=k N$ with $k\in \Nbb$. The proof simply stems from the fact that $a(\xi+n)=a(\xi)$.
\end{proof}

The previous proposition shows that we can restrict our attention to frequencies $\xi$ belonging to the set $[-N/2,N/2[$.
\begin{proposition}[Existence of minimizers]
For any $M\in \Nbb$ and any $u\in \C^N$, there exists at least one minimizer of $J$ on $[-N/2,N/2[^M$.
\end{proposition}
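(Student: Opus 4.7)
The plan is to apply the direct method of the calculus of variations. Using the preceding $N$-periodicity proposition, $J$ descends to a function on the compact quotient torus $\mathbb{T}^M \eqdef (\R/N\Z)^M$; since $[-N/2,N/2[^M$ is a fundamental domain for $\mathbb{T}^M$, any minimizer on the torus admits a representative in $[-N/2,N/2[^M$. It therefore suffices to prove that $J$ attains its infimum on the compact set $\mathbb{T}^M$, which will follow from Weierstrass's theorem as soon as $J$ is continuous (or at least lower semi-continuous).

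For the back-projection reconstructor $R_1$, taking the expectation with respect to $w$ yields
\begin{equation*}
J_1(\Xi) = \f12\|A(\Xi) A(\Xi)^* u - u\|^2 + \frac{\sigma^2}{2}\mathrm{tr}\bigl(A(\Xi)^* A(\Xi)\bigr),
\end{equation*}
which is a trigonometric polynomial in the sampling locations, hence smooth on $\mathbb{T}^M$. For the Tikhonov reconstructor $R_3$ with $\lambda>0$, the matrix $A(\Xi)A(\Xi)^* + \lambda \Id \succeq \lambda \Id$ uniformly in $\Xi$, so $R_3(\Xi)$ and therefore $J_3$ depend smoothly on $\Xi$. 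In both cases, continuity on the compact torus together with Weierstrass gives existence of a minimizer immediately.

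The pseudo-inverse case $R_2$ is more delicate. Here $J_2$ is only guaranteed continuous on the open subset $U \subset \mathbb{T}^M$ where $A(\Xi)^*$ has full column rank, and a singular value decomposition gives
\begin{equation*}
J_2(\Xi) = \f12\bigl\|u - P_{\ran A(\Xi)} u\bigr\|^2 + \frac{\sigma^2}{2}\sum_{i : s_i(\Xi) > 0} s_i(\Xi)^{-2}.
\end{equation*}
In the noisy regime $\sigma>0$, the second term diverges whenever any $s_i(\Xi) \to 0$, so any minimizing sequence remains in a compact subset of $U$, and Bolzano--Weierstrass extracts a subsequence converging to a minimizer in $U$.

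The main obstacle is the noiseless pseudo-inverse case ($\sigma = 0$), where $J_2$ is only upper semi-continuous at rank-deficient configurations: as $\Xi^{(k)} \to \Xi^*$ with two samples coalescing, the limit of $\ran A(\Xi^{(k)})$ is generically strictly larger than $\ran A(\Xi^*)$ (it picks up a derivative direction), so $J_2$ can drop strictly in the limit. I would circumvent this by showing that the set $\{\ran A(\Xi) : \Xi \in \mathbb{T}^M\}$ is closed in the relevant Grassmannian, such limit ranges being themselves realized by schemes in which repeated samples encode derivative directions; this closedness is the step I expect to require the most care.
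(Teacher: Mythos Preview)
For $J_1$ and $J_3$ your argument is essentially the paper's: the cost is smooth, the domain is (after periodicity) compact, Weierstrass applies. The paper is in fact terser than you --- it simply asserts that $J$ is $C^\infty$ as a composition of $C^\infty$ maps, hence continuous on $[-N/2,N/2]^M$, and concludes.

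Where you diverge is $J_2$. The paper draws no distinction and applies the same one-line argument; strictly speaking the map $\Xi \mapsto (A(\Xi)^*)^+$ is \emph{not} $C^\infty$ (nor even continuous) at rank-deficient configurations, so your extra care is justified and your $\sigma>0$ argument (the variance term $\tfrac{\sigma^2}{2}\sum_i s_i(\Xi)^{-2}$ blows up near the rank-drop locus, confining any minimizing sequence to a compact subset of the full-rank region) is a correct and genuinely more complete treatment than the paper's.

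Your proposed fix for the noiseless case $\sigma=0$, however, does not prove the statement as written. The limit subspaces you describe --- e.g.\ $\mathrm{span}\bigl(a(\xi),a'(\xi)\bigr)$ when two samples coalesce --- are generically \emph{not} of the form $\ran A(\Xi')$ for any $\Xi'\in\mathbb{T}^M$: when two entries of $\Xi$ coincide, the Moore--Penrose reconstructor $R_2$ merely sees a repeated column and projects onto the strictly smaller space $\ran A(\Xi)$; it does not ``encode a derivative direction''. Hence a Grassmannian compactification would establish existence for a \emph{modified} reconstructor (confluent/Hermite-type sampling), not for $R_2$ itself. In this regime $J_2$ is, as you note, only upper semi-continuous, which is the wrong direction for minimization; without a further argument the existence of a minimizer for $J_2$ at $\sigma=0$ remains open --- a gap the paper's proof silently shares.
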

\begin{proof}
We start by noticing that $J$ is a $C^\infty$ function since it is defined as a composition of $C^\infty$ functions. Hence it is also continuous on $[-N/2,N/2]^M$. This yields the existence of at least one minimizer.
\end{proof}

Now we proceed to a reformulation of the problem by rearranging the terms involved in the definition of $J$.

\begin{proposition}\label{prop:decompR}
    The reconstructors associated to $J_1$, $J_2$ and $J_3$ defined in \eqref{eq:def_J} can be expressed as
    \begin{equation}\label{eq:ReqAQ}
        R(\Xi) = A(\Xi)Q(\Xi)
    \end{equation}
    (i.e. the solution lives in $\ran(A)$) with: 
    \begin{itemize}
        \item $Q_1(\Xi)=\Id$
        \item $Q_2(\Xi)=(A(\Xi)^*A(\Xi))^+$
        \item $Q_3(\Xi)=(1+\lambda)(A(\Xi)^*A(\Xi)+\lambda\Id)^{-1}$.
    \end{itemize} 
\end{proposition}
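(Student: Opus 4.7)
The plan is to dispatch the three cases independently since each reconstructor has a different closed form. In every case, the goal is to pull the factor $A(\Xi)$ out to the left. I'll drop the argument $\Xi$ for readability.

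\textbf{Case 1 (back-projection).} Here $R_1 = A$ by definition, so the identity $R_1 = A \cdot \Id$ is immediate and $Q_1 = \Id$. Nothing to do.

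\textbf{Case 2 (pseudo-inverse).} I need to show $(A^*)^+ = A (A^*A)^+$. The standard identity $A^+ = (A^*A)^+ A^*$ holds for any matrix, and taking adjoints, using that $(A^*A)^+$ is Hermitian because $A^*A$ is, gives $(A^+)^* = A (A^*A)^+$. Combined with $(A^*)^+ = (A^+)^*$, this yields $R_2 = A Q_2$. The only subtle point is justifying $A^+ = (A^*A)^+ A^*$; this follows from the singular value decomposition of $A$ and the fact that $(A^*A)^+$ acts as the inverse of $A^*A$ on $\ran(A^*)$ and as zero on $\ker(A)$, which are exactly the mapping properties needed.

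\textbf{Case 3 (Tikhonov).} The target identity is the push-through lemma
\begin{equation*}
(AA^* + \lambda \Id)^{-1} A = A (A^*A + \lambda \Id)^{-1},
\end{equation*}
which I would prove by observing that
\begin{equation*}
(AA^* + \lambda \Id)\, A = AA^*A + \lambda A = A\,(A^*A + \lambda \Id),
\end{equation*}
and then multiplying on the left by $(AA^* + \lambda \Id)^{-1}$ and on the right by $(A^*A + \lambda \Id)^{-1}$. Both matrices are invertible since $\lambda > 0$ makes them positive definite. Multiplying by the scalar $(1+\lambda)$ gives $R_3 = A Q_3$.

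I expect no real obstacle: the result is a collection of standard linear-algebra manipulations and the main ``trick'' is simply recognizing that the push-through identity handles the Tikhonov case, while the pseudo-inverse case is a direct consequence of the factorization $A^+ = (A^*A)^+ A^*$.
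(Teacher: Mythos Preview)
Your proposal is correct and follows essentially the same route as the paper: Case~1 is trivial, Case~3 is exactly the push-through identity the authors use, and for Case~2 the paper simply invokes ``one of the standard properties of the pseudo-inverse,'' which is precisely the factorization $A^+=(A^*A)^+A^*$ (equivalently $(A^*)^+=A(A^*A)^+$) that you spell out. You have supplied more detail than the paper, but the argument is the same.
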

\begin{proof}
    For $Q_1$, there is nothing to prove.
    For $Q_2$, we use one of the standard properties of the pseudo-inverse.
    For $Q_3$, we use the equality $A\left(A^*A+\lambda\Id\right) = \left(AA^*+\lambda\Id\right)A$ and then left multiply by $\left(AA^*+\lambda\Id\right)^{-1}$ and right multiply by $\left(A^*A+\lambda\Id\right)^{-1}$.
\end{proof}

\begin{proposition}\label{prop:decompJ}
    Letting $\hat u(\Xi)=A(\Xi)^*u$, we have
    \begin{align}
        J(\Xi) &= \f12\|u\|_2^2 - \langle Q(\Xi)\hat u(\Xi), \hat u(\Xi)\rangle \label{eq:generic_decomp} \\
        &\quad + \f12 \|R(\Xi)\hat u(\Xi)\|_2^2 + \f12 \E_w\left( \|R(\Xi)w\|_2^2 \right) \nonumber
    \end{align}
\end{proposition}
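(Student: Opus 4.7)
The plan is to expand the squared norm inside $J(\Xi)$ using the noise/signal decomposition, then use the factorization $R(\Xi)=A(\Xi)Q(\Xi)$ from Proposition~\ref{prop:decompR} to rewrite the deterministic cross term in terms of $Q(\Xi)$ and $\hat u(\Xi)$.

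First I would split the argument of the expectation as
\begin{equation*}
R(\Xi)(A(\Xi)^*u+w)-u = \bigl(R(\Xi)\hat u(\Xi)-u\bigr)+R(\Xi)w,
\end{equation*}
expand $\|\cdot\|_2^2$, and take the expectation. The cross terms are linear in $w$ and vanish because $\E_w(w)=0$, so
\begin{equation*}
J(\Xi)=\tfrac12\|R(\Xi)\hat u(\Xi)-u\|_2^2+\tfrac12\E_w\bigl(\|R(\Xi)w\|_2^2\bigr).
\end{equation*}

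Next I would expand the deterministic squared norm as $\|R(\Xi)\hat u(\Xi)\|_2^2-2\Re\langle R(\Xi)\hat u(\Xi),u\rangle+\|u\|_2^2$. Using $R(\Xi)=A(\Xi)Q(\Xi)$ and moving $A(\Xi)$ to the right by adjunction, the cross term becomes
\begin{equation*}
\langle R(\Xi)\hat u(\Xi),u\rangle=\langle Q(\Xi)\hat u(\Xi),A(\Xi)^*u\rangle=\langle Q(\Xi)\hat u(\Xi),\hat u(\Xi)\rangle.
\end{equation*}
The only subtlety is to justify that this inner product is real so that the factor $\Re$ can be dropped; this follows because $Q(\Xi)$ is self-adjoint in each of the three cases ($Q_1=\Id$ trivially, $Q_2=(A^*A)^+$ as the pseudo-inverse of a Hermitian positive semidefinite matrix, and $Q_3=(1+\lambda)(A^*A+\lambda\Id)^{-1}$ as the inverse of a Hermitian positive definite matrix), so $\langle Q\hat u,\hat u\rangle\in\R$.

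Putting the pieces together and dividing by two yields exactly the claimed identity \eqref{eq:generic_decomp}. The main (minor) obstacle is really just the bookkeeping with the $\Re$ operator in the complex inner product; once self-adjointness of $Q(\Xi)$ is invoked, the whole proof reduces to a direct bilinear expansion.
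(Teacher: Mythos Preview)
Your proof is correct and follows essentially the same route as the paper's own proof: expand the square, drop the cross terms linear in $w$ via $\E_w(w)=0$, use $R=AQ$ and adjunction to rewrite $\Re\langle R\hat u,u\rangle$ as $\langle Q\hat u,\hat u\rangle$, and invoke $Q^*=Q$ to remove the real part. The only difference is organizational---you first isolate the noise contribution and then expand the deterministic block, whereas the paper expands everything in one shot---and you spell out the self-adjointness of each $Q_i$, which the paper simply asserts.
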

\begin{proof}
    We drop the dependency in $\Xi$ to simplify the notation.
    \begin{align*}
        2 J &= \|u\|_2^2 + \|RA^*u\|_2^2 + \E_w\left(\|Rw\|_2^2\right) \\
        &\quad + 2\E_w\left(\Re\langle RA^*u-u,w \rangle\right) -2\Re\langle RA^*u,u \rangle \\
        &= \|u\|_2^2 + \|RA^*u\|_2^2 + \E_w\left(\|Rw\|_2^2\right) -2\langle Q\hat u,\hat u \rangle
    \end{align*}
    where we used $Q(\Xi)^*=Q(\Xi)$ and $\E_w(w)=0$.
\end{proof}

Equation~\eqref{eq:generic_decomp} greatly simplifies when $\Xi$ is a subgrid. 
Let us define the following function

\begin{equation}\label{eq:Jtilde}
    \tilde J(\Xi) \eqdef \f12\| u\|_2^2 - \f12\|\hat u(\Xi)\|_2^2 +\frac{\sigma^2M}2.
\end{equation}

\begin{proposition}\label{prop:decomp_subgrid}
    When $\Xi$ is a subgrid, $J(\Xi)=\tilde J(\Xi)$.
\end{proposition}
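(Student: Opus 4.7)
The plan is to observe that on a subgrid the columns of $A(\Xi)$ are orthonormal, so all three reconstructors collapse to the back-projection, and then simplify the expression from Proposition~\ref{prop:decompJ} term by term.

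First I would verify that $A(\Xi)^* A(\Xi) = \Id_M$ when $\Xi$ is a subgrid. The inner product $\langle a(\xi_m), a(\xi_{m'})\rangle$ equals $\frac{1}{N}\sum_{n\in\Nc} e^{2\iota\pi(\xi_m-\xi_{m'})n/N}$, which is $1$ when $m=m'$. When $m\neq m'$, the difference $\xi_m-\xi_{m'}$ is a nonzero integer, and since $\xi_m,\xi_{m'}\in[-N/2,N/2[$ it satisfies $|\xi_m-\xi_{m'}|<N$, so it is not a multiple of $N$; the standard geometric sum then vanishes. Hence $A(\Xi)^*A(\Xi)=\Id_M$.

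Consequently $Q_1=\Id$ by definition, $Q_2=(\Id)^+=\Id$, and $Q_3=(1+\lambda)(\Id+\lambda\Id)^{-1}=\Id$, so the three reconstructors all reduce to $R(\Xi)=A(\Xi)$ via Proposition~\ref{prop:decompR}. Plugging $Q=\Id$ and $R=A(\Xi)$ into Proposition~\ref{prop:decompJ} gives
\begin{align*}
J(\Xi) &= \f12\|u\|_2^2 - \langle\hat u(\Xi),\hat u(\Xi)\rangle \\
&\quad + \f12\|A(\Xi)\hat u(\Xi)\|_2^2 + \f12\E_w(\|A(\Xi)w\|_2^2).
\end{align*}
Using $A^*A=\Id_M$ twice, $\|A(\Xi)\hat u(\Xi)\|_2^2=\hat u(\Xi)^*A^*A\hat u(\Xi)=\|\hat u(\Xi)\|_2^2$ and similarly $\E_w(\|A(\Xi)w\|_2^2)=\E_w(\|w\|_2^2)=\sigma^2 M$.

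Assembling the four terms yields $J(\Xi)=\f12\|u\|_2^2-\|\hat u(\Xi)\|_2^2+\f12\|\hat u(\Xi)\|_2^2+\frac{\sigma^2 M}{2}=\tilde J(\Xi)$, which is the claimed identity. There is no real obstacle here; the only point requiring care is the orthonormality computation, in particular checking that the constraint $\Xi\in[-N/2,N/2[^M$ precludes $\xi_m-\xi_{m'}$ from being a nonzero multiple of $N$ so that the geometric sum truly vanishes.
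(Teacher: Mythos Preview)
Your proof is correct and follows exactly the same approach as the paper's: use $A(\Xi)^*A(\Xi)=\Id$ on a subgrid to get $Q(\Xi)=\Id$ and $R(\Xi)^*R(\Xi)=\Id$, then simplify Proposition~\ref{prop:decompJ}. You have simply written out the orthonormality computation and the term-by-term simplification that the paper leaves implicit.
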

\begin{proof}
    When $\Xi$ is a subgrid, we have $A(\Xi)^* A(\Xi) = \Id$ and we use the decomposition of Proposition~\ref{prop:decompJ} with $Q(\Xi)=\Id$, $R(\Xi)^*R(\Xi)=\Id$.
\end{proof}

\section{Theoretical issues}\label{sec:main_results}

In this section, we give the main theoretical results of the paper.

\subsection{Spurious minimizers}\label{sec:spurious_min}

The aim of this section is to illustrate a common situation where the function $J$ possesses a combinatorial number of minimizers.
We construct examples where the function $\tilde J$ defined in \eqref{eq:Jtilde} is very oscillatory, while $J-\tilde J$ is of small amplitude.
The function $J$ is close to $\tilde J$ not only for subgrids as in Proposition~\ref{prop:decomp_subgrid} but also for well-spread schemes.
Following the proof of Proposition~\ref{prop:decomp_subgrid}, and the decomposition of Proposition~\ref{prop:decompJ}, it is sufficient to control how close $Q(\Xi)$ and $R(\Xi)^*R(\Xi)$ are to $\Id$.
This is the aim of the following proposition.

\begin{proposition}[Bound on $Q$ and $R^*R$]\label{prop:bnd_q_rr}
    Consider a sampling pattern $\Xi$ such that $\md(\Xi) > 1$ and set $\epsilon=1/\md(\Xi)$. Then
    \begin{eqnarray}
        -a_i\Id \preccurlyeq& Q_i-\Id &\preccurlyeq a_i\Id \label{eq:bnd_Q} \\
        -b_i\Id \preccurlyeq& R_i^*R_i-\Id &\preccurlyeq b_i\Id, \label{eq:bnd_RR}
    \end{eqnarray}
    with
    \begin{align*}
            a_1 &= 0, \quad a_2 = \frac\epsilon{1-\epsilon}, \quad a_3 = \frac{\epsilon}{1-\epsilon} \\ 
            b_1 &= \epsilon, \quad b_2 = \frac\epsilon{1-\epsilon}, \quad b_3 = \frac{4\epsilon}{(1-\epsilon)^2}
    \end{align*}
\end{proposition}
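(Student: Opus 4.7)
The plan is to reduce all six bounds to a single spectral estimate on $A(\Xi)^*A(\Xi)$, and then derive each conclusion by Hermitian functional calculus. The key step is to establish
\begin{equation*}
(1-\epsilon)\Id \preccurlyeq A(\Xi)^*A(\Xi) \preccurlyeq (1+\epsilon)\Id.
\end{equation*}
The entries of $A(\Xi)^*A(\Xi)$ are inner products $\langle a(\xi_{m'}), a(\xi_m)\rangle = D(\xi_m - \xi_{m'})$, where $D(\xi) = \frac{1}{N}\sum_{n\in\Nc} e^{2\iota\pi\xi n/N}$ is the periodic Dirichlet kernel. The diagonal equals $1$, and an elementary computation based on $|\sin(\pi\xi/N)| \ge 2|\xi|/N$ on $[-N/2,N/2]$ yields $|D(\xi)| \le \frac{1}{2\dist(\xi,0)}$. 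Combined with $\dist(\xi_m,\xi_{m'}) \ge 1/\epsilon$ for $m\ne m'$, a Hilbert-type inequality of Montgomery--Vaughan flavor, which exploits the full pairwise separation of the $\xi_m$ rather than the smallness of individual entries, then yields the two-sided spectral bound.

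Once this estimate is in hand, the bounds on $Q_i - \Id$ follow from straightforward algebra on functions of the Hermitian operator $A^*A$. The case $Q_1 = \Id$ is trivial and gives $a_1 = 0$. For the pseudo-inverse, I would write $Q_2 - \Id = (A^*A)^{-1}(\Id - A^*A)$, whose operator norm is at most $\epsilon/(1-\epsilon)$. For the Tikhonov case, I would rewrite $Q_3 - \Id = (A^*A + \lambda\Id)^{-1}(\Id - A^*A)$, bounded in operator norm by $\epsilon/(1-\epsilon+\lambda) \le \epsilon/(1-\epsilon)$. Since $Q_i - \Id$ is Hermitian, each operator-norm bound immediately gives the two-sided PSD inequality \eqref{eq:bnd_Q}.

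For \eqref{eq:bnd_RR}, I would use $R_i^*R_i = Q_i A^*A Q_i$, which is again a function of $A^*A$. The first two cases collapse: $R_1^*R_1 = A^*A$ directly inherits $b_1=\epsilon$, and $R_2^*R_2 = (A^*A)^{-1} = Q_2$ reuses $b_2 = \epsilon/(1-\epsilon)$. The Tikhonov case is the only one requiring separate work. On an eigenvalue $c$ of $A^*A$, the corresponding eigenvalue of $R_3^*R_3 - \Id$ is
\begin{equation*}
f(c) = \frac{(c-1)(\lambda^2 - c)}{(c+\lambda)^2},
\end{equation*}
which comes from the polynomial identity $(1+\lambda)^2 c - (c+\lambda)^2 = (c-1)(\lambda^2-c)$. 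Maximizing $|f(c)|$ for $c \in [1-\epsilon, 1+\epsilon]$ and $\lambda \ge 0$, using the bounds $\lambda^2 + 1 + \epsilon \le 2(\lambda+1)^2$ together with $(\lambda+1)/(\lambda+1-\epsilon) \le 1/(1-\epsilon)$, yields $b_3 = 4\epsilon/(1-\epsilon)^2$ with some slack.

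The main obstacle is the opening spectral estimate on $A^*A$. A naive Gershgorin or Schur test based solely on the pointwise decay $|D(\xi)| \le 1/(2\dist(\xi,0))$ would introduce a spurious $\log N$ factor, because the harmonic sum $\sum_k 1/k$ appears when the other $\xi_{m'}$ are packed on the torus at density $1/\md(\Xi)$. Getting the clean constant $\epsilon$ requires the Hilbert-type inequality that uses the global geometry of the sampling set, not only the individual off-diagonal bound; everything else is then essentially functional calculus on $A^*A$.
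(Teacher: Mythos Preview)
Your approach is essentially the same as the paper's: both reduce everything to the two-sided spectral bound $(1-\epsilon)\Id \preccurlyeq A^*A \preccurlyeq (1+\epsilon)\Id$ and then apply Hermitian functional calculus to each $Q_i$ and $R_i^*R_i$. The paper simply cites this spectral bound from Aubel--B\"olcskei rather than deriving it, so your identification of the Montgomery--Vaughan/Hilbert-type inequality as the underlying mechanism (and your remark that Gershgorin alone would lose a $\log N$) is correct and slightly more informative; your polynomial identity $(1+\lambda)^2c-(c+\lambda)^2=(c-1)(\lambda^2-c)$ for the Tikhonov case is also a bit cleaner than the paper's separate numerator/denominator bounds and indeed gives $2\epsilon/(1-\epsilon)^2$, confirming the slack you noticed in $b_3$.
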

The proof is postponed to Section~\ref{sec:proof_bnd_q_rr}.

\begin{theorem}[A combinatorial number of minimizers \label{thm:main1}]
    Set a number of samples $M\in \Nbb$ and consider a vector $u\in \C^N$ such that the following properties are verified
    \begin{enumerate}[i)]
      \item The modulus $|\hat u|^2$ possesses a subset of $K\geq M$ local maximizers $Z = \{\zeta_1,\hdots, \zeta_K\}$ separated by a distance at least $\delta=\md(Z)$ with $\delta>1+2r$ for some $r>0$. 
      \item The modulus $|\hat u|^2$ is locally strictly concave for each $\zeta_k$:
        \begin{equation*}
            |\hat u|^2(\zeta_k+h)\leq |\hat u|^2(\zeta_k)-\frac c 2 h^2, \forall h \in [-r,r]
        \end{equation*}
      for some $c>0$.
      \item For any subset $\bar \Xi$ of $M$ distinct points in $Z$, we have
    \begin{equation}\label{eq:condition_cryptic}
        \frac{cr^2}{2} > \left(b+2a\right)\|\hat u(\bar \Xi)\|_2^2+b M\sigma^2 
    \end{equation}
        where $a$ and $b$ are given in Proposition~\ref{prop:bnd_q_rr} with $\epsilon=\frac{1}{\delta-2r}$. 
    \end{enumerate}
    Then, the function $J$ possesses at least $\binom{K}{M} \cdot M\,!$ local minimizers.
\end{theorem}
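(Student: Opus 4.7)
My plan is to exhibit one local minimizer of $J$ inside each of the $\binom{K}{M}\cdot M!$ pairwise disjoint open boxes associated to the injections $\varphi:\{1,\dots,M\}\hookrightarrow\{1,\dots,K\}$. To $\varphi$ I attach the center $\bar\Xi_\varphi=(\zeta_{\varphi(1)},\dots,\zeta_{\varphi(M)})$ and the box $V_\varphi=\prod_m B(\zeta_{\varphi(m)},r)\subset\R^M$. Since $\delta>1+2r$, the balls $B(\zeta_k,r)$ are pairwise disjoint on the torus, the $V_\varphi$'s are disjoint, and every $\Xi\in\overline{V_\varphi}$ satisfies $\md(\Xi)\geq\delta-2r>1$. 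Hence Proposition~\ref{prop:bnd_q_rr} applies uniformly on $\overline{V_\varphi}$ with $\epsilon=1/(\delta-2r)$. Showing that $J$ is strictly larger on $\partial V_\varphi$ than at the single point $\bar\Xi_\varphi$ will, by compactness, force the minimum of $J$ on $\overline{V_\varphi}$ to be attained in the open interior, producing the desired local minimizer.

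The workhorse is a quantitative comparison with the surrogate $\tilde J$ of \eqref{eq:Jtilde}. Starting from Proposition~\ref{prop:decompJ} and the identity $\E_w\|R(\Xi)w\|_2^2=\sigma^2\mathrm{tr}(R^*R)$, I would rewrite
\begin{equation*}
J(\Xi)-\tilde J(\Xi) = -\langle (Q-\Id)\hat u,\hat u\rangle + \tfrac12\langle (R^*R-\Id)\hat u,\hat u\rangle + \tfrac{\sigma^2}{2}\mathrm{tr}(R^*R-\Id),
\end{equation*}
then apply Proposition~\ref{prop:bnd_q_rr} to control each term in terms of $\|\hat u(\Xi)\|_2^2$ and $M\sigma^2$. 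This upgrades Proposition~\ref{prop:decomp_subgrid} from subgrids to arbitrary well-spread configurations, with an error vanishing as $\md(\Xi)\to\infty$.

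The second ingredient is the boundary growth of $\tilde J$ on $\partial V_\varphi$. On that boundary, at least one coordinate $\xi_m$ lies at distance exactly $r$ from $\zeta_{\varphi(m)}$, so assumption (ii) gives $|\hat u(\xi_m)|^2\leq|\hat u(\zeta_{\varphi(m)})|^2-cr^2/2$, while $|\hat u(\xi_{m'})|^2\leq|\hat u(\zeta_{\varphi(m')})|^2$ for every other index by the local-max property. Summing yields $\|\hat u(\Xi)\|_2^2\leq\|\hat u(\bar\Xi_\varphi)\|_2^2-cr^2/2$, hence a decrease of $\tilde J$ of order $cr^2$ when moving from $\bar\Xi_\varphi$ to $\partial V_\varphi$. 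Combining this gain with the two-point application of the perturbation estimate, at $\Xi\in\partial V_\varphi$ and at $\bar\Xi_\varphi$, produces a lower bound on $J(\Xi)-J(\bar\Xi_\varphi)$ that is strictly positive precisely under the ``cryptic'' assumption~\eqref{eq:condition_cryptic}.

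Continuity of $J$ on the compact $\overline{V_\varphi}$ then guarantees that its minimum is attained in the open interior, hence at a local minimizer of $J$ on $\R^M$. Disjointness of the $\binom{K}{M}\cdot M!$ boxes yields the claimed count. The step I expect to be the most delicate is the constant-matching in the last paragraph: the cryptic condition~\eqref{eq:condition_cryptic} is essentially tight, so the perturbation bound from Proposition~\ref{prop:bnd_q_rr} must be used carefully (in particular exploiting that $\|\hat u(\Xi)\|_2^2\leq\|\hat u(\bar\Xi_\varphi)\|_2^2$ throughout $V_\varphi$) to avoid spurious numerical factors.
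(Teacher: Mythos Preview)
Your proposal is correct and follows essentially the same route as the paper's own proof: compare $J$ to the surrogate $\tilde J$ via Proposition~\ref{prop:bnd_q_rr}, use the strict concavity bound on $\partial V_\varphi$ to get $\|\hat u(\Xi)\|_2^2\le\|\hat u(\bar\Xi_\varphi)\|_2^2-\tfrac{cr^2}{2}$, and then exploit $\|\hat u(\Xi)\|_2^2\le\|\hat u(\bar\Xi_\varphi)\|_2^2$ to collapse the two perturbation terms into the single factor $(b+2a)\|\hat u(\bar\Xi_\varphi)\|_2^2$ appearing in \eqref{eq:condition_cryptic}. One wording slip: moving from $\bar\Xi_\varphi$ to $\partial V_\varphi$ \emph{increases} $\tilde J$ (since $\tilde J=\tfrac12\|u\|_2^2-\tfrac12\|\hat u(\Xi)\|_2^2+\tfrac{\sigma^2M}{2}$), which is precisely what you need.
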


The proof of Theorem~\ref{thm:main1} is postponed to Section~\ref{sec:proof_thm_main1_gen}.
The conditions in Theorem~\ref{thm:main1} may look cryptic at first sight.
We first show a simple example of a function $u$ that verifies the hypotheses and leads to a huge number of critical points.

\begin{corollary}\label{cor:example_spurious}
    Assume that $N\in 4\Nbb$ and define $u\in \C^N$ as follows
    \begin{equation}\label{eq:def_u_oscillatory}
    u[n] = 
    \begin{cases}
    \sqrt{N}/2 & \textrm{ if } n=\pm N/4,\\
    0 & \textrm{otherwise}.
    \end{cases}
    \end{equation}

    Let $M=\lfloor \eta\sqrt N \rfloor$ with $\eta = \frac{\pi^2 \sqrt{2}}{256 \cdot (20+16\sigma^2)}$ then all the functions $J_i$ possess a number of minimizers larger than $M! \cdot \left(\frac{1}{2\eta}\right)^{\eta\sqrt{N}}$.

    For $\sigma\leq 1$, the bound holds for $\eta = 3\cdot 10^{-3}$.
    For $\sigma=0$ and $J_1$ the bound can be increased to $\eta=1.09\cdot 10^{-1}$.
\end{corollary}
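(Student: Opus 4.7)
The plan is to directly verify the three hypotheses of Theorem~\ref{thm:main1} applied to the specific signal $u$ defined in~\eqref{eq:def_u_oscillatory}.

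First, I would compute the Fourier transform explicitly: a short calculation gives $\hat u(\xi) = \cos(\pi\xi/2)$, so $|\hat u|^2(\xi) = \cos^2(\pi\xi/2)$, which attains its global maximum of $1$ at every even integer. Using $N \in 4\Nbb$, this produces a set of $N/2$ global maximizers in the periodization domain $[-N/2, N/2)$, all with the same value. For condition (ii), the key elementary inequality is $\cos^2(\pi h/2) \le 1 - (c/2)h^2$ valid on $[-r, r]$. Rewriting as $\sin^2(\pi h/2) \ge (c/2)h^2$ and invoking the monotonicity of $(\sin x)/x$ on $(0, \pi/2]$ shows that one may take $c = 2\sin^2(\pi r/2)/r^2$ for any $r\in(0,1]$, which gives the convenient identity $cr^2/2 = \sin^2(\pi r/2)$. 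The parameter $r$ will be optimized at the end.

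The critical choice is the subset $Z$ of $K \ge M$ maxima. I would take $Z$ as a uniform sub-sampling of the even lattice with step $\delta = 2j$, producing $K \approx N/(2j)$ points with $\md(Z) = \delta$. Two tensions arise:
\begin{enumerate}[(a)]
\item The binomial bound $\binom{K}{M} \ge (K/M)^M$ yields the stated count $M!\,(1/(2\eta))^{\eta\sqrt{N}}$ as soon as $K \ge M/(2\eta)$, i.e., $j \lesssim \eta N/M = \sqrt{N}$.
\item Condition (iii) with $\|\hat u(\bar\Xi)\|_2^2 = M$ reduces to an inequality of the form $\sin^2(\pi r/2) > M\cdot f_i(\epsilon, \sigma^2)$ where $\epsilon = 1/(\delta-2r)$ and $f_i(\epsilon) = O(\epsilon)$ as $\epsilon \to 0$; this forces $\delta$ to grow like $M$.
\end{enumerate}
Balancing (a) and (b) by choosing $j = \Theta(\sqrt{N})$ yields $\delta \sim \sqrt{N}$, $\epsilon \sim 1/\sqrt{N}$, and thus $M\epsilon \sim \eta$. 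The remaining inequality then reads $\eta \lesssim \sin^2(\pi r/2)/(A_i + B_i\sigma^2)$, with explicit $A_i, B_i$ coming from the constants $a_i, b_i$ of Proposition~\ref{prop:bnd_q_rr}. The uniform bound $\eta = \pi^2\sqrt{2}/(256(20+16\sigma^2))$ is obtained by selecting a small $r$ near $1/8$ (so that $\sin^2(\pi r/2) \approx \pi^2/256$) and using the worst-case constants, namely those of $J_3$; the improved value $\eta = 1.09\cdot 10^{-1}$ for $J_1$ and $\sigma = 0$ exploits that $a_1 = 0$ and $b_1 = \epsilon$, which eliminates the dominant $(b+2a)$ contribution.

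The main obstacle is the careful book-keeping needed to turn the asymptotic balancing into explicit numerical constants: handling the integer constraints on $j$ and $K$, controlling the non-asymptotic remainders in the rational functions $f_i(\epsilon)$, and optimizing over $r$ so that the constraints from (a) and (b) can be met simultaneously with $M = \lfloor\eta\sqrt{N}\rfloor$. The underlying argument, however, is a direct application of Theorem~\ref{thm:main1}, with all geometric content already captured by the explicit form $|\hat u|^2 = \cos^2(\pi\cdot/2)$.
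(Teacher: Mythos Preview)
Your proposal is correct and follows essentially the same route as the paper: compute $\hat u(\xi)=\cos(\pi\xi/2)$, sub-sample the even lattice with step $\delta=2p$ where $p\approx\sqrt N$, note that $\|\hat u(\bar\Xi)\|_2^2=M$ on such subsets, plug into condition~(iii) of Theorem~\ref{thm:main1} with the coarse estimates $a\le 2/(\delta-2r)$, $b\le 16/(\delta-2r)$, and finish with $\binom{K}{M}\ge (K/M)^M$.

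The only substantive difference is how the concavity constant is extracted. The paper does not optimize over $r$: it fixes $r=1/4$ from the outset and bounds the second derivative, $(|\hat u|^2)''\le -\pi^2/(2\sqrt 2)$ on $[\zeta_k-r,\zeta_k+r]$, yielding $c=\pi^2\sqrt 2/8$ and hence $cr^2/2=\pi^2\sqrt 2/256$. Your route via $cr^2/2=\sin^2(\pi r/2)$ is perfectly valid (and actually a bit sharper), but your reverse-engineering of the stated constant is off: with $r\approx 1/8$ your formula gives $\sin^2(\pi/16)\approx \pi^2/256$, not $\pi^2\sqrt 2/256$, so the $\sqrt 2$ in the paper's $\eta$ does not come from the place you suggest. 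For the $J_1$, $\sigma=0$ case, your observation that $a_1=0$, $b_1=\epsilon$ collapses condition~(iii) to $cr^2/2>\epsilon M$ is exactly what the paper uses to obtain $\eta=1.09\cdot10^{-1}$.
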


\begin{proof}
    The choice of $u$ in \eqref{eq:def_u_oscillatory} leads to the oscillatory function $\hat u(\xi) = \cos\left(\frac{\pi}{2}\xi\right)$.
    The modulus $|\hat u|$ is maximal at every point $\xi\in 2\Nbb$. Let $\xi_0\in 2\Nbb$ and set $r=\frac{1}{4}$. For any $\xi\in [\xi_0-r,\xi_0+r]$, we have
    \begin{align*} 
    (|\hat u|^2)''(\xi)& = \frac{\pi^2}{2} \left( \sin^2\left(\frac{\pi}{2}\xi\right) -\cos^2\left(\frac{\pi}{2}\xi\right)\right) \\
    & \leq -\frac{\pi^2}{2\sqrt{2}}.
    \end{align*}

    Let $p\in \Nbb$. The conditions i) and ii) of Theorem~\ref{thm:main1} are satisfied with $Z=2p\Nbb\cap [-N/2,N/2[$, $K=\left \lfloor N/(2p) \right \rfloor$, $r=1/4$, $c=\frac{\pi^2\sqrt{2}}{8}$, $\delta = 2p$.
    Further notice that for every set $\bar \Xi\in Z^M$, $\|\hat u(\bar \Xi)\|_2^2=M$.

    For this example, the condition \eqref{eq:condition_cryptic} therefore reads
    \begin{equation}
    M < \frac{\pi^2 \sqrt{2}}{256}\cdot \left(\frac{1}{b+2a+b\sigma^2}\right).
    \end{equation}
    As long as this condition is satisfied, Theorem~\ref{thm:main1} allows to conclude on the existence of $\binom{\lfloor N/2p \rfloor}{M} \cdot M\,!$ maximizers. 

    Now, if $\delta-2r\geq 2$, we can coarsely simplify the bounds in Proposition~\ref{prop:bnd_q_rr} as 
    \begin{equation*}
    a\leq \frac{2}{\delta-2r}\quad  \mbox{and} \quad b\leq \frac{16}{\delta-2r}. 
    \end{equation*}
    Hence, a combinatorial number of minimizers is granted given that 
    \begin{equation}
    M < \frac{\pi^2 \sqrt{2}}{256}\cdot \left(\frac{\delta-2r}{20+16\sigma^2}\right).
    \end{equation}

    Now, take $p=\lfloor \sqrt{N} \rfloor$ and $M = \lfloor \eta \cdot \sqrt{N} \rfloor$ with $\eta = \frac{\pi^2 \sqrt{2}}{128 \cdot (20+16\sigma^2)}$.
    Then Theorem \ref{thm:main1} yields a number of minimizers larger than $\binom{\lfloor \sqrt{N}/2 \rfloor}{ \lfloor \eta\cdot \sqrt{N} \rfloor } \cdot M\,!$.
    Using the standard bound
    \begin{equation}
    \binom{n}{k} \geq \left(\frac{n}{k}\right)^k
    \end{equation}
    yields a number of minimizers larger than $\left(\frac{1}{2\eta}\right)^{\eta\sqrt{N}}$.

    In particular for $\sigma<1$ this yields $\eta=0.003$.
    The bound can be increased to $\eta=0.109$ for $\sigma=0$ and $J_1$.
\end{proof}

\subsection{Numerical illustration of Theorem~\ref{thm:main1}}

In this section we illustrate Theorem~\ref{thm:main1} through numerical examples in Fig.~\ref{fig:energy_profile_M2}.
We first consider the noiseless settings $\sigma=0$ and illustrate the existence of spurious minimizers for the back-projection and the pseudo-inverse methods.

We introduce the following function
\begin{equation}
F(\Xi) \eqdef \f12 \sum_{m=1}^M |\hat u(\xi_m)|^2 = \frac{1}{2}\|\hat u(\Xi)\|_2^2,
\end{equation}
which somehow measures the energy captured within a sampling scheme $\Xi$.
We also introduce the functions $G_1$ and $G_2$ such that $J_1=\f12\|u\|_2^2-F+G_1$ and $J_2=\f12\|u\|_2^2-F+G_2$. Using Proposition~\ref{prop:decompJ}, we have
\begin{equation}
G_1(\Xi) = \frac{1}{2} \left\langle \left(A(\Xi)^* A(\Xi) - \Id \right) \hat u(\Xi) , \hat u(\Xi) \right \rangle,
\end{equation}
and
\begin{equation}
G_2(\Xi) = \frac{1}{2} \left\langle \left( \Id - (A(\Xi)^* A(\Xi))^+ \right) \hat u(\Xi) , \hat u(\Xi) \right \rangle.
\end{equation}

From the left to the right, we used three different 1D signals: a high frequency cosine, a low frequency sine and a Gaussian.
We plot the different energy landscapes, for $M=2$ measurements at locations $\Xi=\{\xi_1,\xi_2\}$ and $N=16$.
From the top to the bottom, we display the functions $J_1$, $J_2$, $G_1$, $G_2$, $-F$ and the modulus of the Fourier transform $\xi\mapsto|\hat u(\xi)|$.
In order to understand the effect of the signal's structure, the local minima of $J_1$, $J_2$, $G_1$, $G_2$ and $-F$ are represented with red dots.

First notice that the cost functions are symmetric with respect to the diagonal. This simply reflects the fact that permutation of points lead to the same energy, and this illustrates the factor $M!$ in Theorem~\ref{thm:main1}.

As can be seen in all cases, the functions $G_1$ and $G_2$ vanish far away from the diagonal (see Proposition~\ref{prop:bnd_q_rr}).
These point configurations correspond to well-spread sampling schemes.
On the contrary, the function $-F$ can have a large amplitude even outside the diagonal. These two properties are the main ingredients to prove Theorem~\ref{thm:main1}.

The left column (high frequency cosine), corresponds to the example in Corollary~\ref{cor:example_spurious}. 
We see a number of minimizers that seems quadratic in $N$ for $M=2$. 
The center column (low frequency sine) shows that the number of minimizers decreases with a higher regularity of the signal, by reducing the oscillations in $F$. 
On the right (Gaussian function), we illustrate a case where $F$ has only one local maximum. 
Even in this case, the function $J$ still has valleys with shallow local minima. 
The same phenomenon appears in the center (low frequency sine). 
Notice that this phenomenon is not captured by Theorem~\ref{thm:main1}, which only relies on local maximizers of $F$. 
In these two examples, the oscillations are induced by the function $G$, which we do not explore in this paper. 

\begin{figure*}[htbp]
\centering
\begin{tabular}{c c}
\rotatebox[origin=c]{90}{\small$J_1=\f12 \|u\|_2^2-F+G_1$} &
\includegraphics[valign=m,width=.31\linewidth]{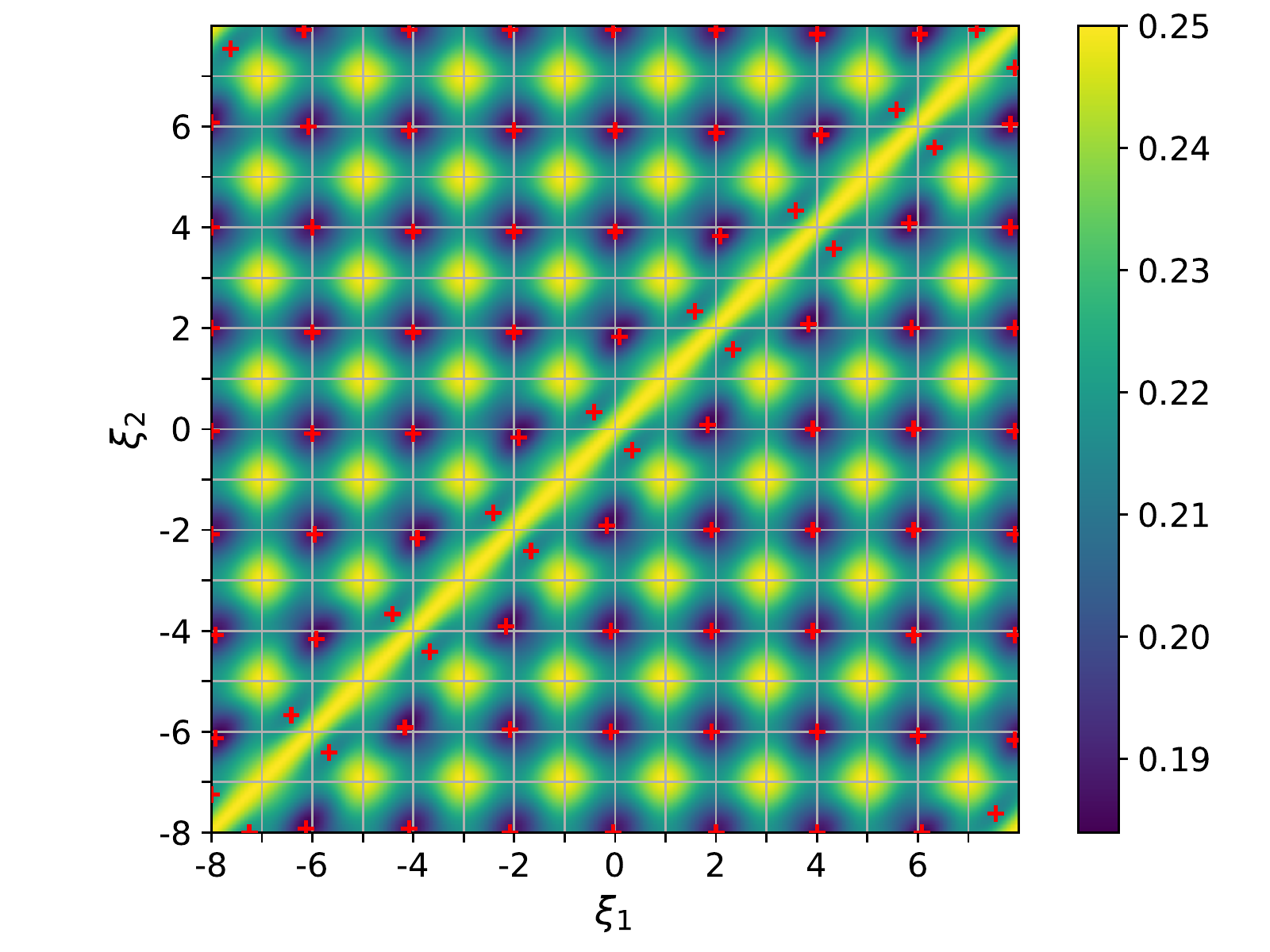}
\includegraphics[valign=m,width=.31\linewidth]{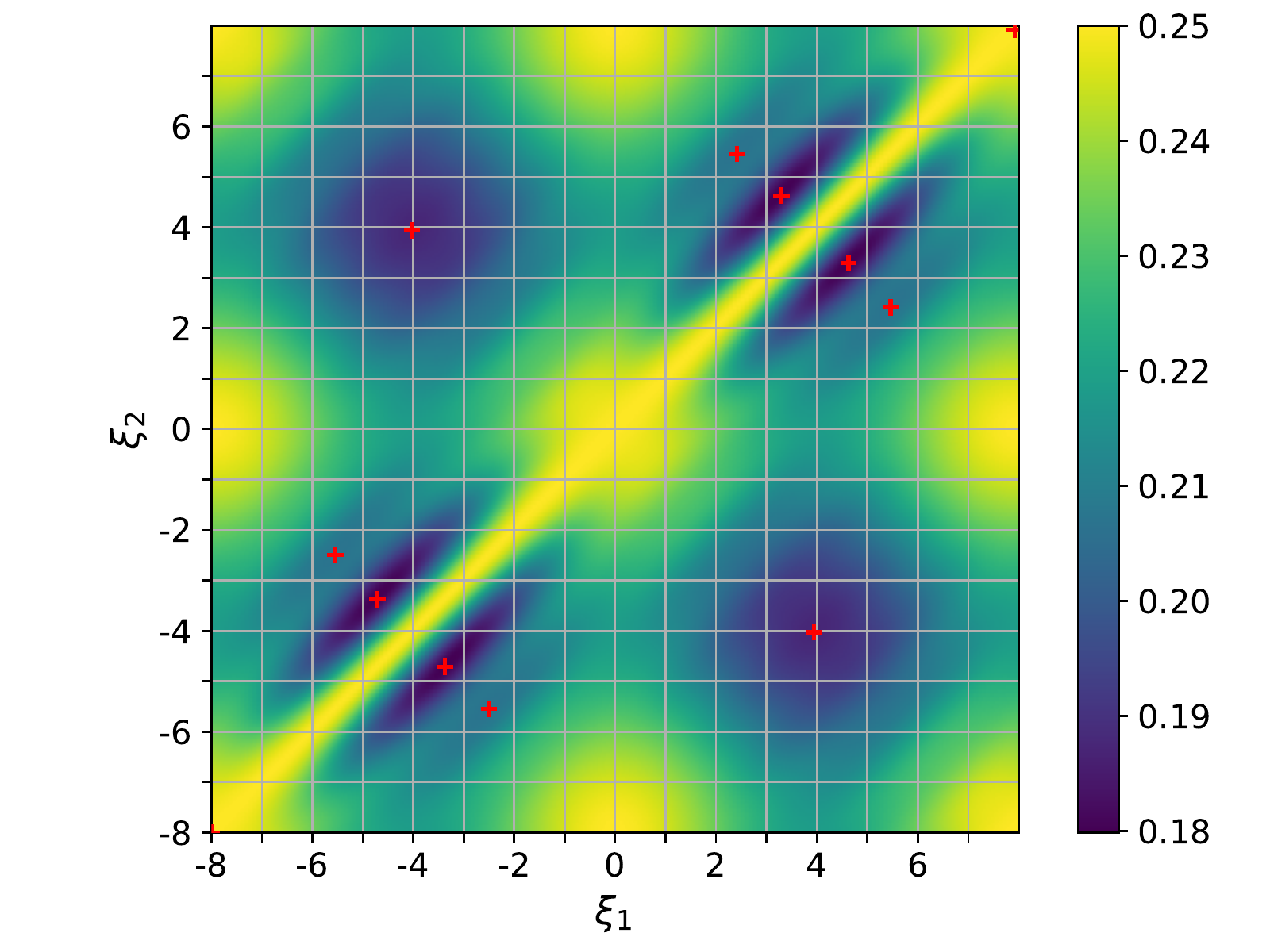}
\includegraphics[valign=m,width=.31\linewidth]{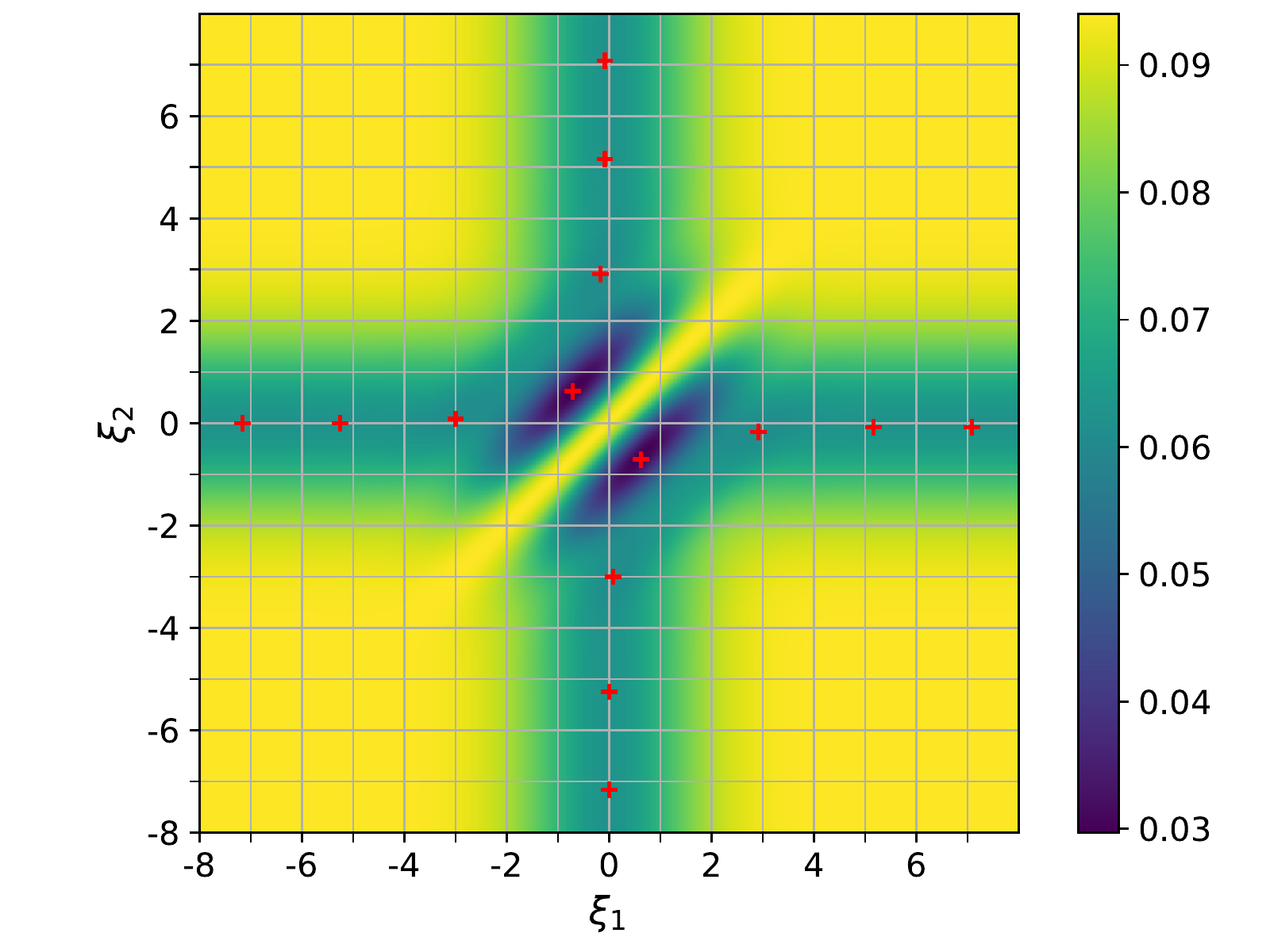} \\
\rotatebox[origin=c]{90}{\small$J_2=\f12 \|u\|_2^2-F+G_2$} &
\includegraphics[valign=m,width=.31\linewidth]{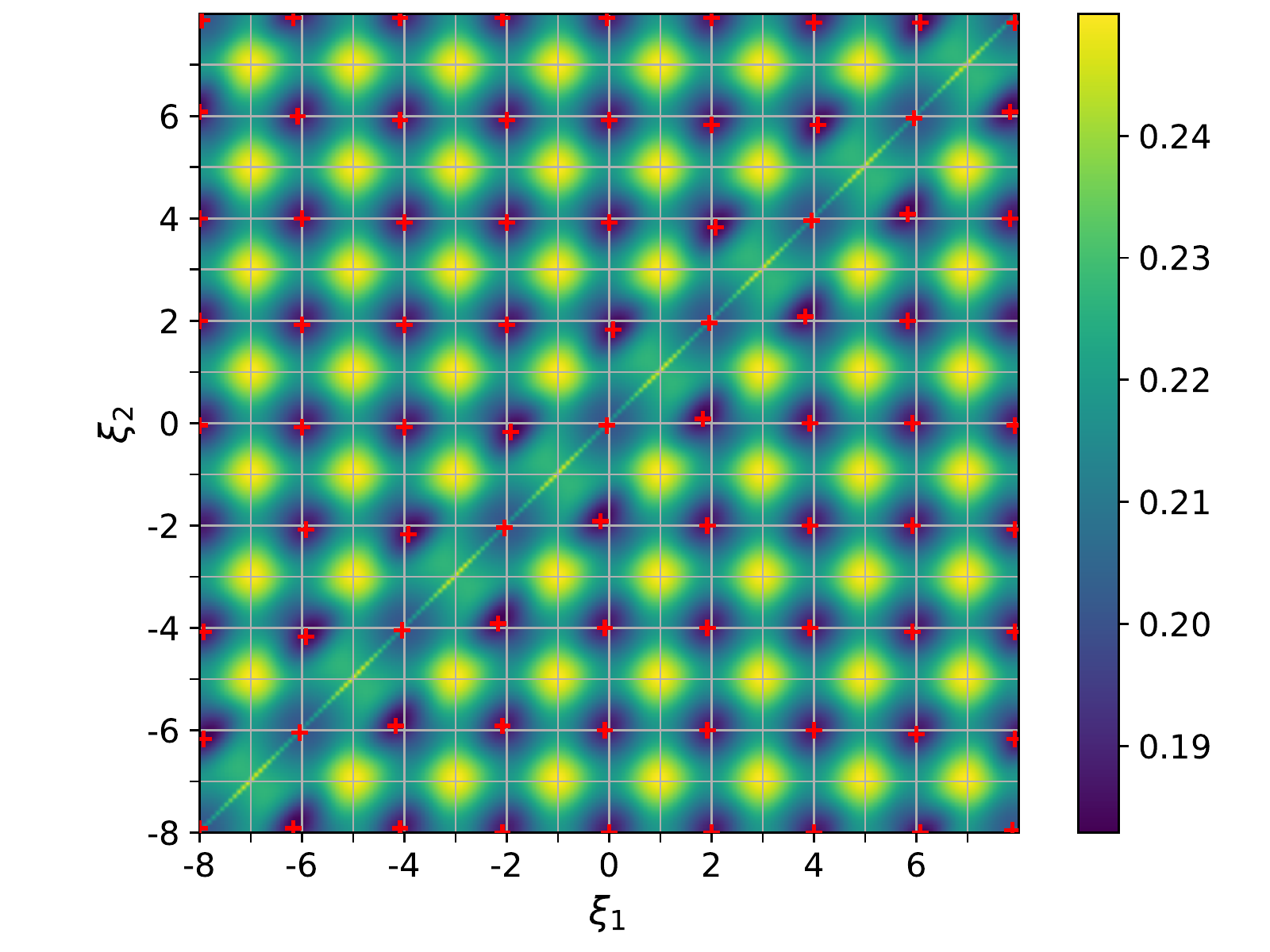}
\includegraphics[valign=m,width=.31\linewidth]{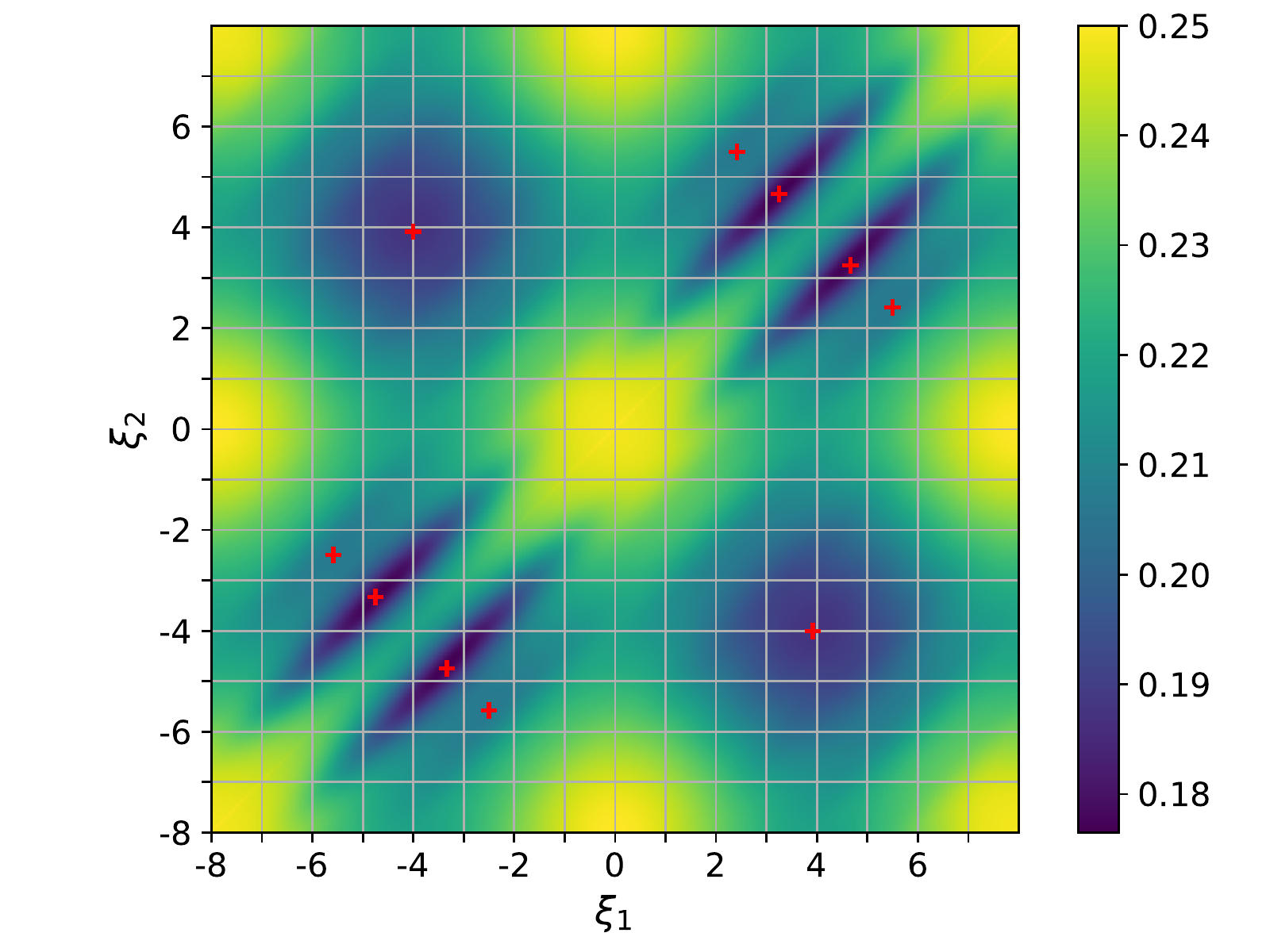}
\includegraphics[valign=m,width=.31\linewidth]{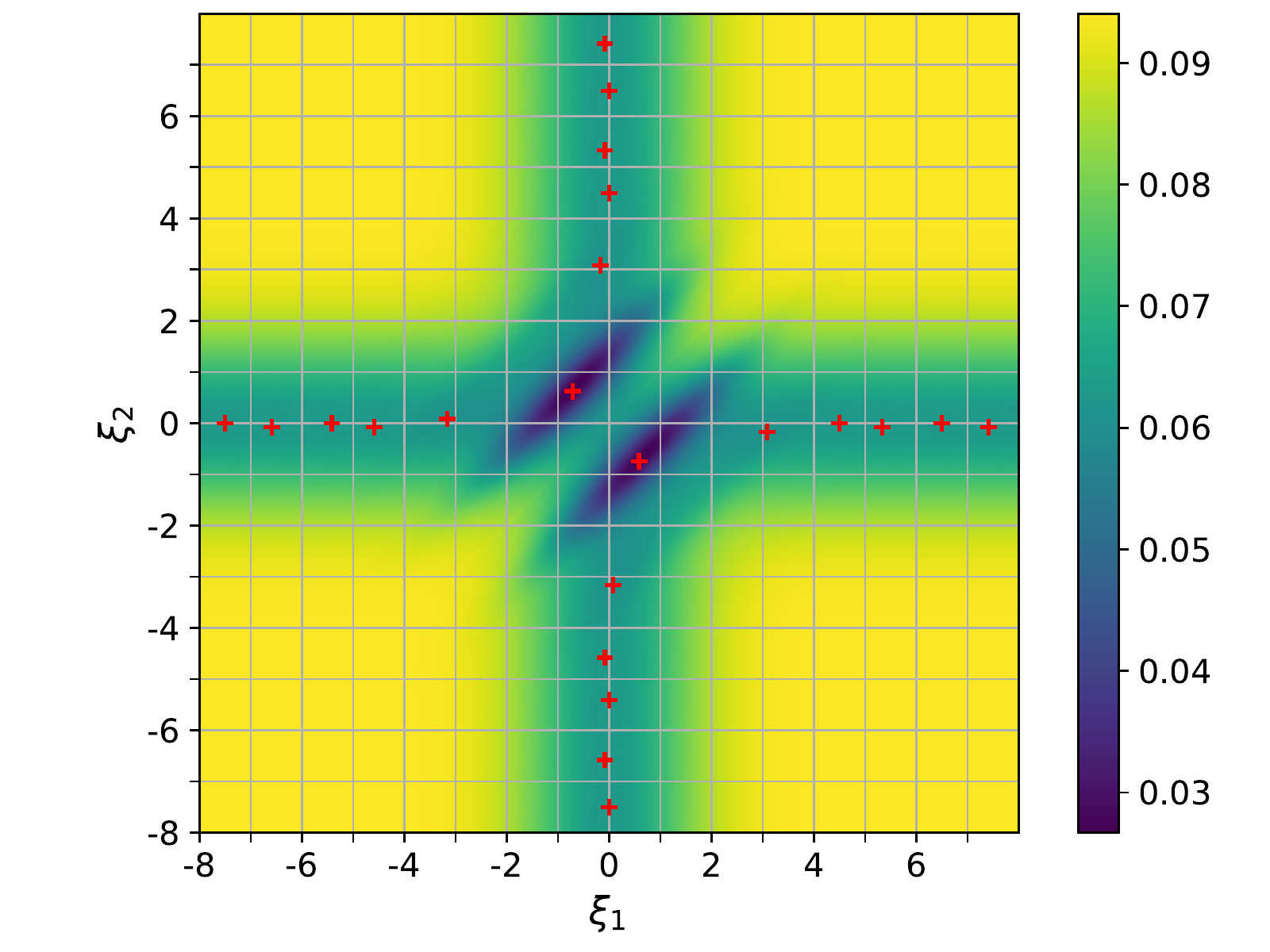} \\
\rotatebox[origin=c]{90}{\small$G_1$} &
\includegraphics[valign=m,width=.31\linewidth]{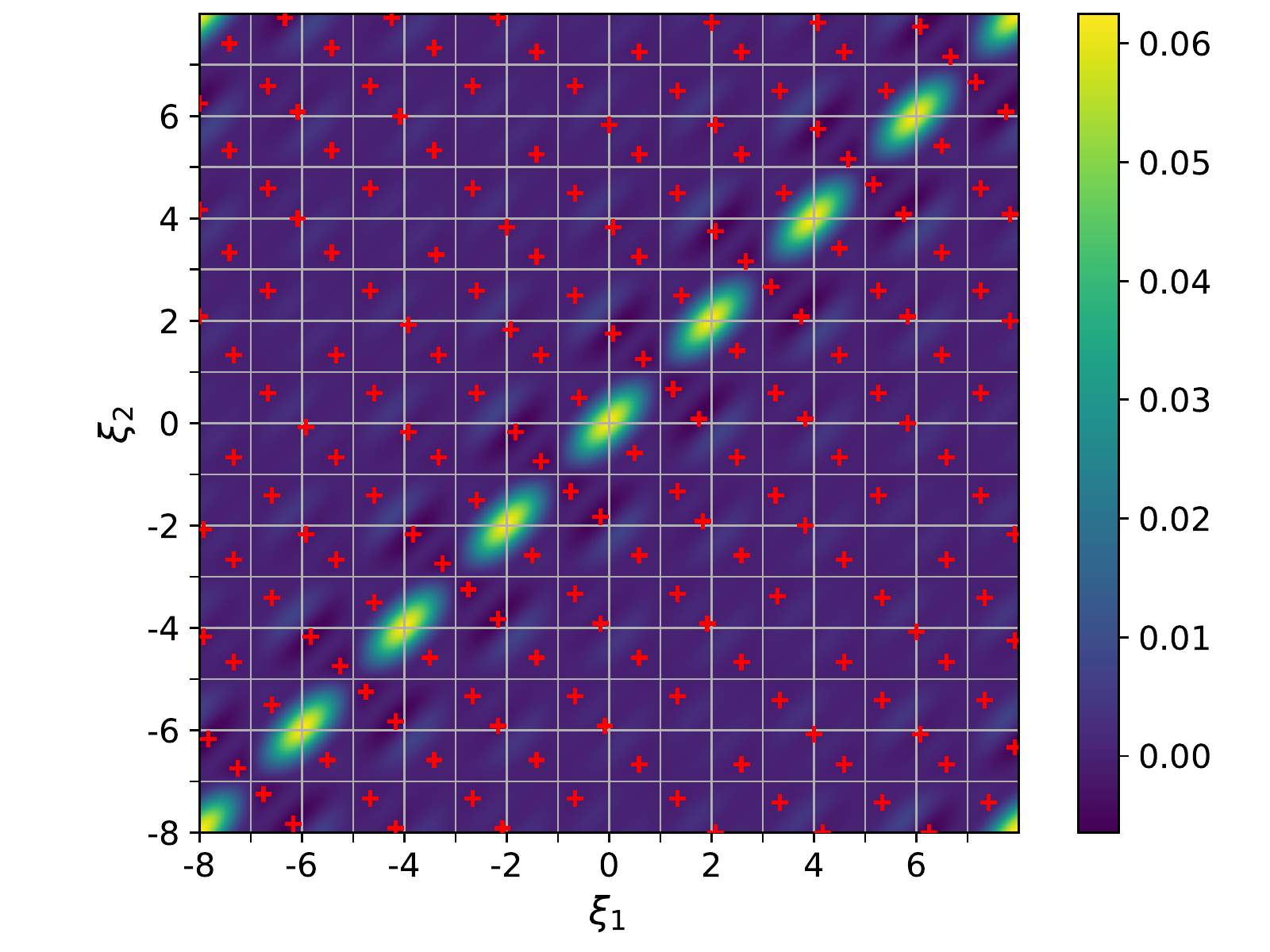}
\includegraphics[valign=m,width=.31\linewidth]{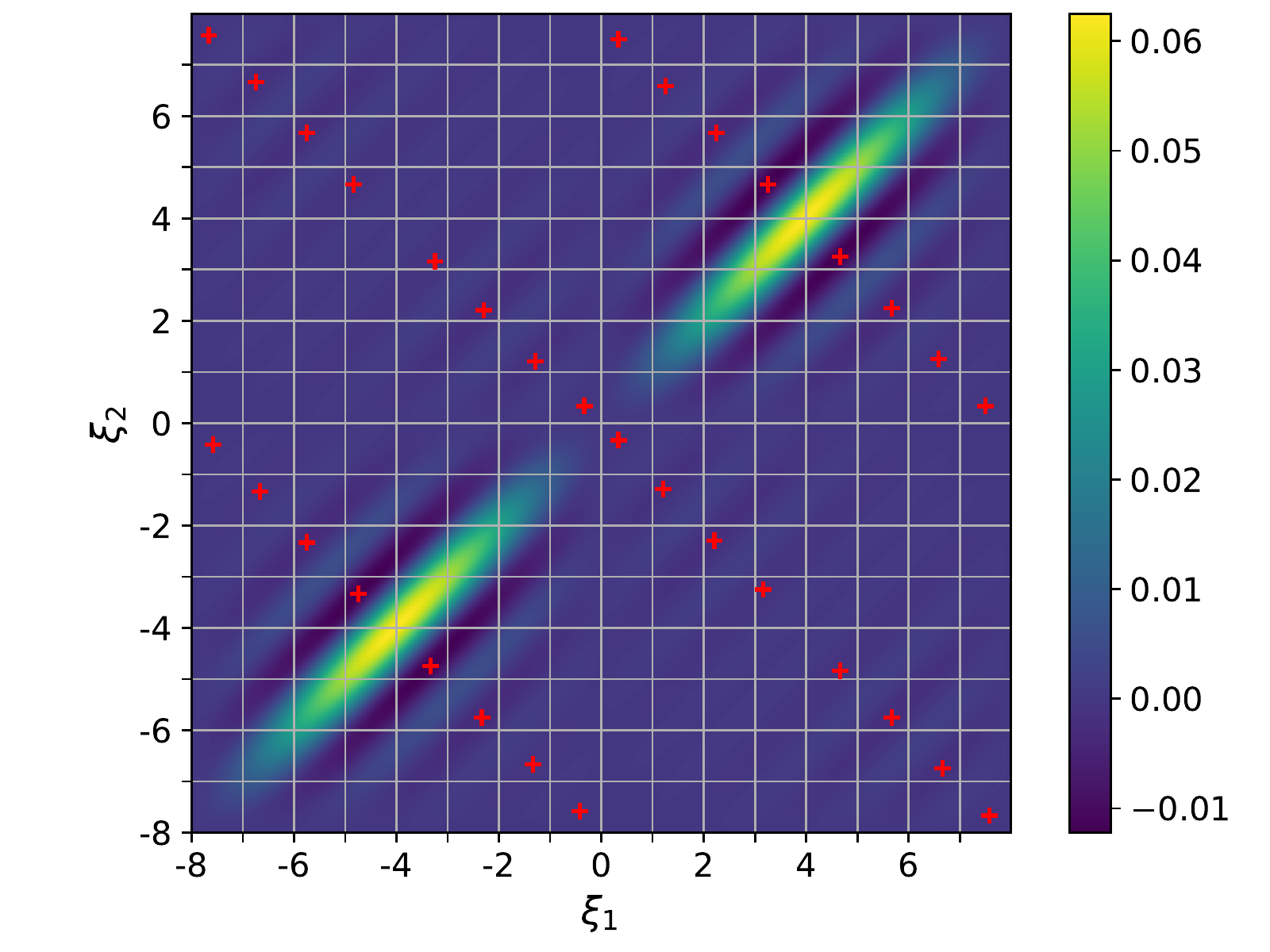}
\includegraphics[valign=m,width=.31\linewidth]{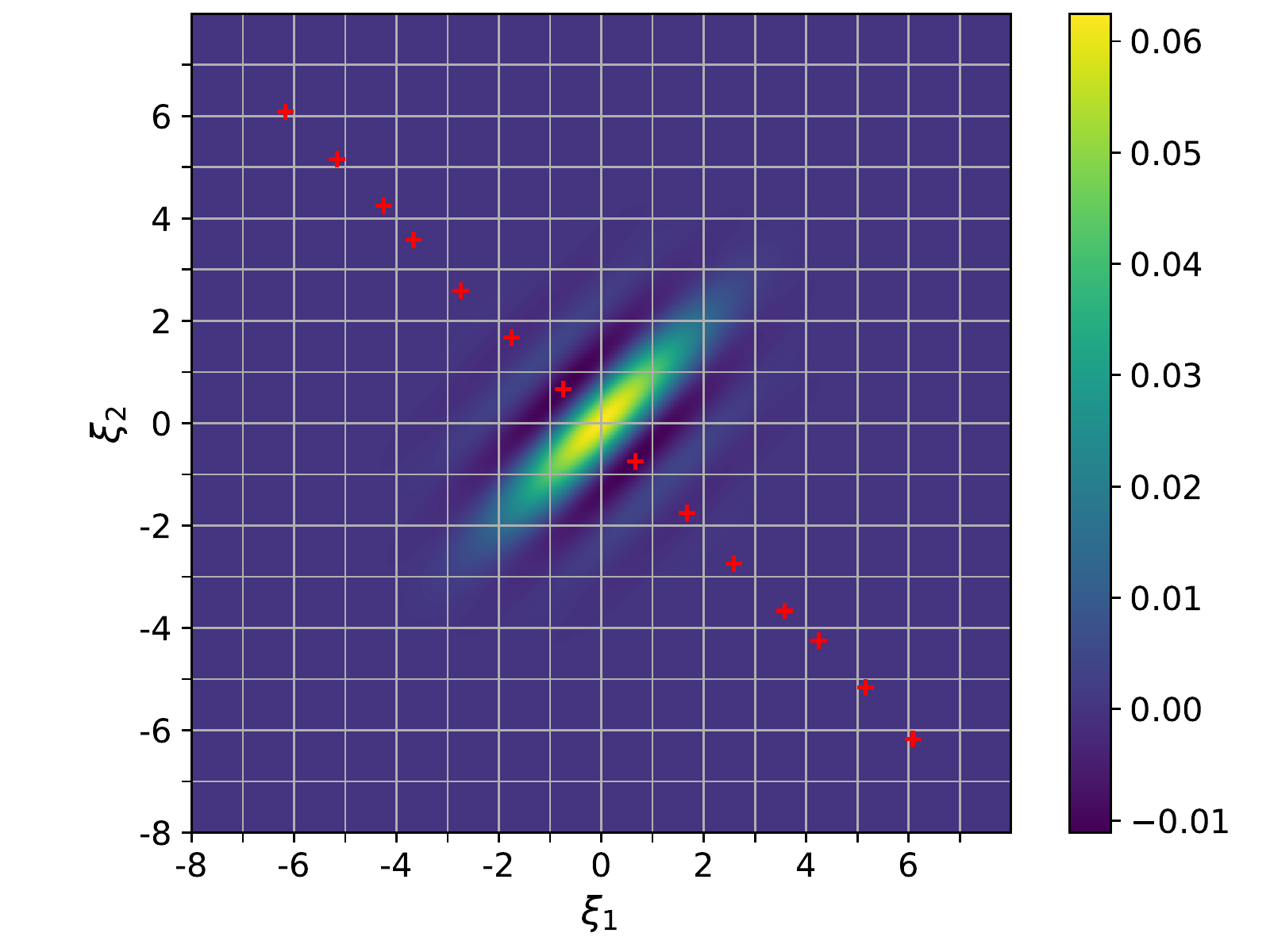} \\
\rotatebox[origin=c]{90}{\small$G_2$} &
\includegraphics[valign=m,width=.31\linewidth]{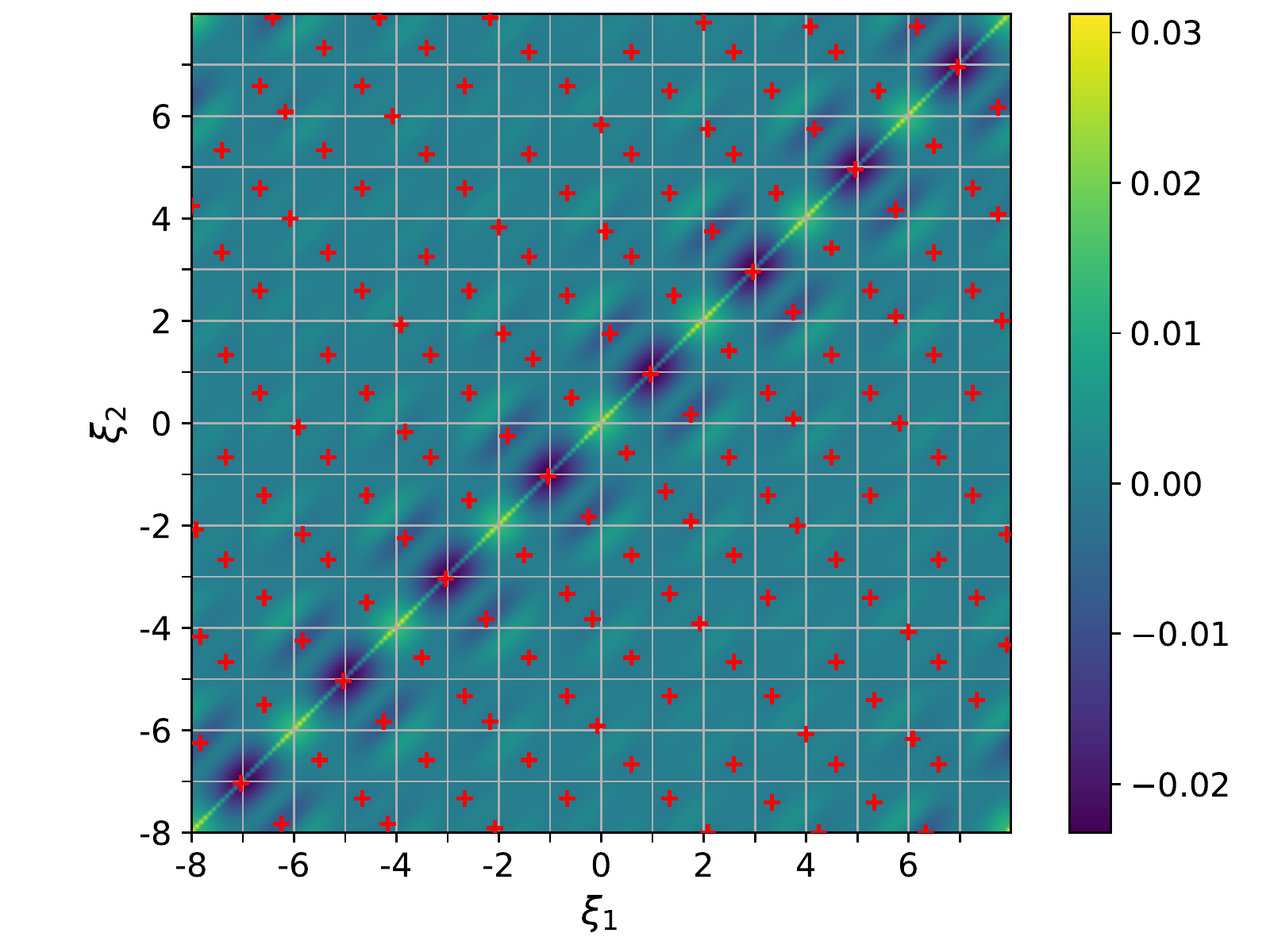}
\includegraphics[valign=m,width=.31\linewidth]{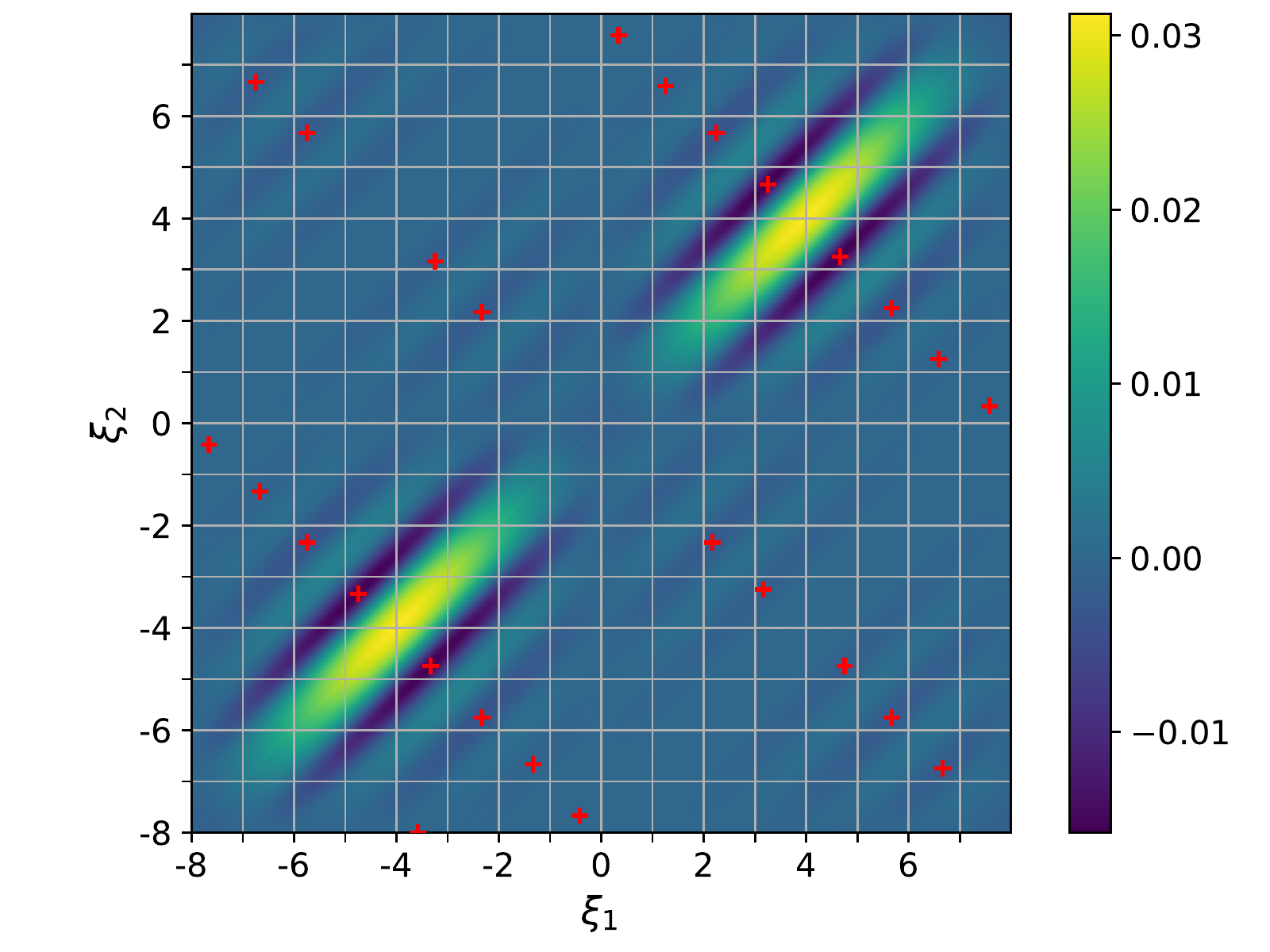}
\includegraphics[valign=m,width=.31\linewidth]{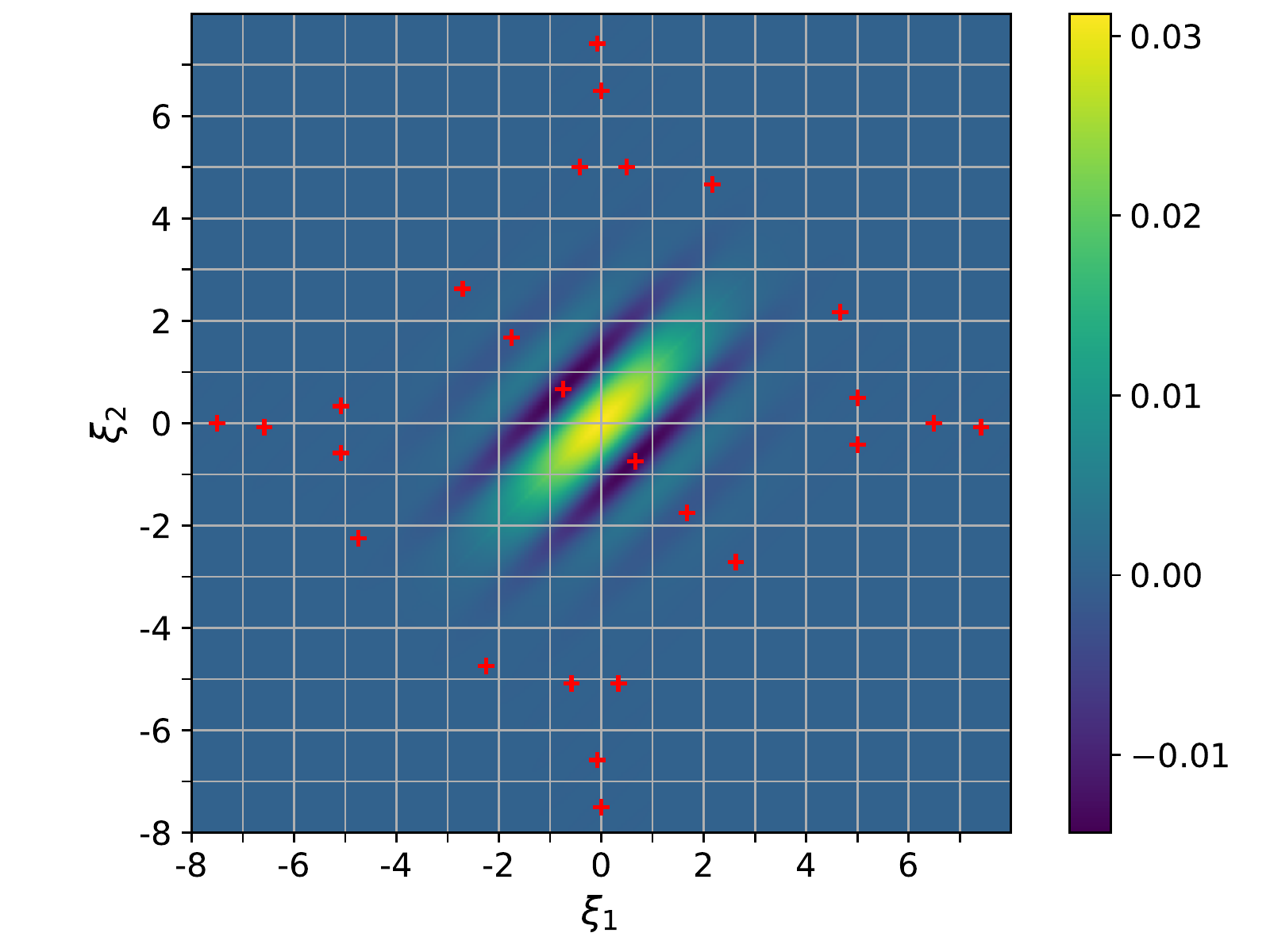} \\
\rotatebox[origin=c]{90}{\small$-F$} &
\includegraphics[valign=m,width=.31\linewidth]{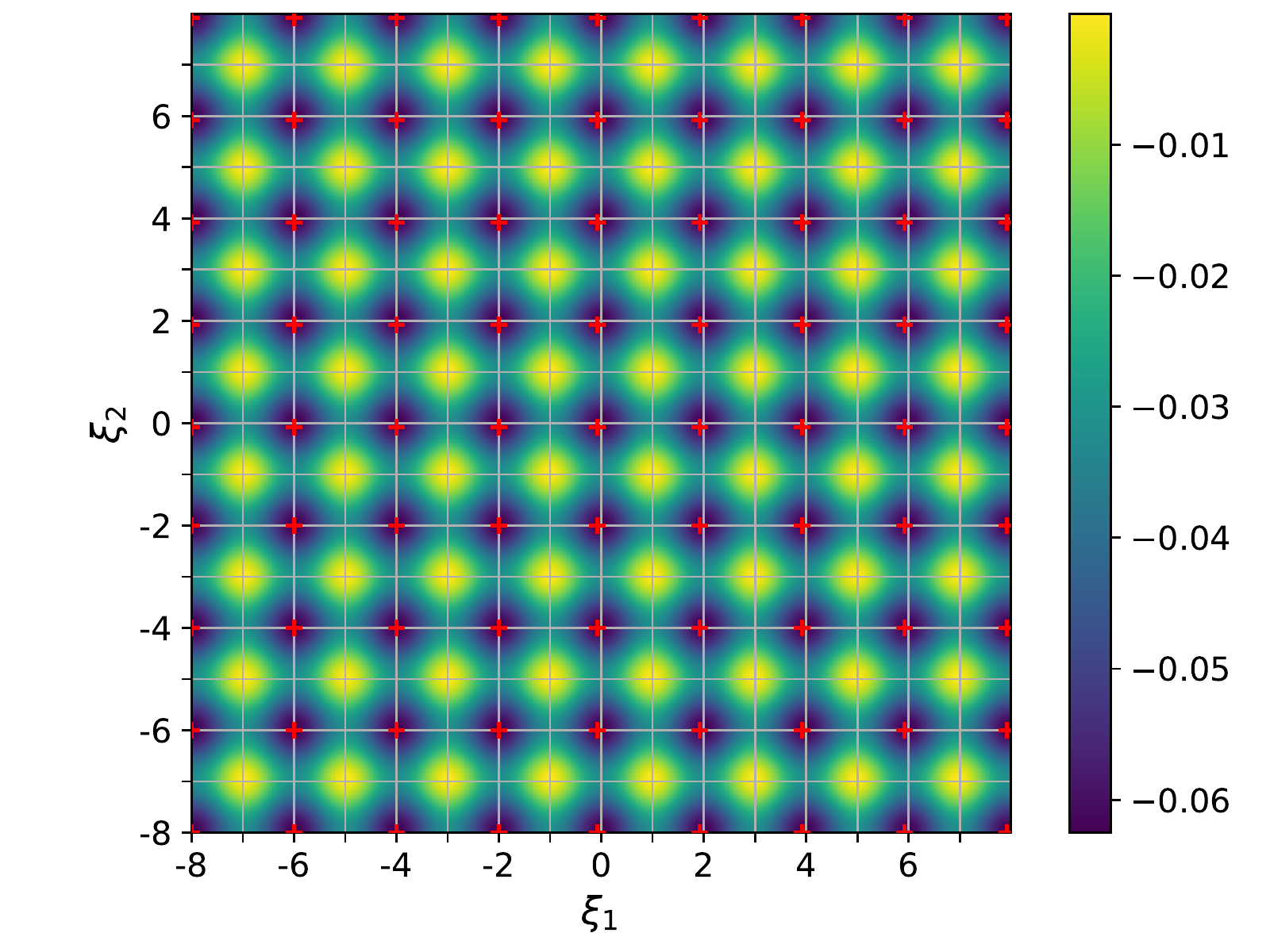}
\includegraphics[valign=m,width=.31\linewidth]{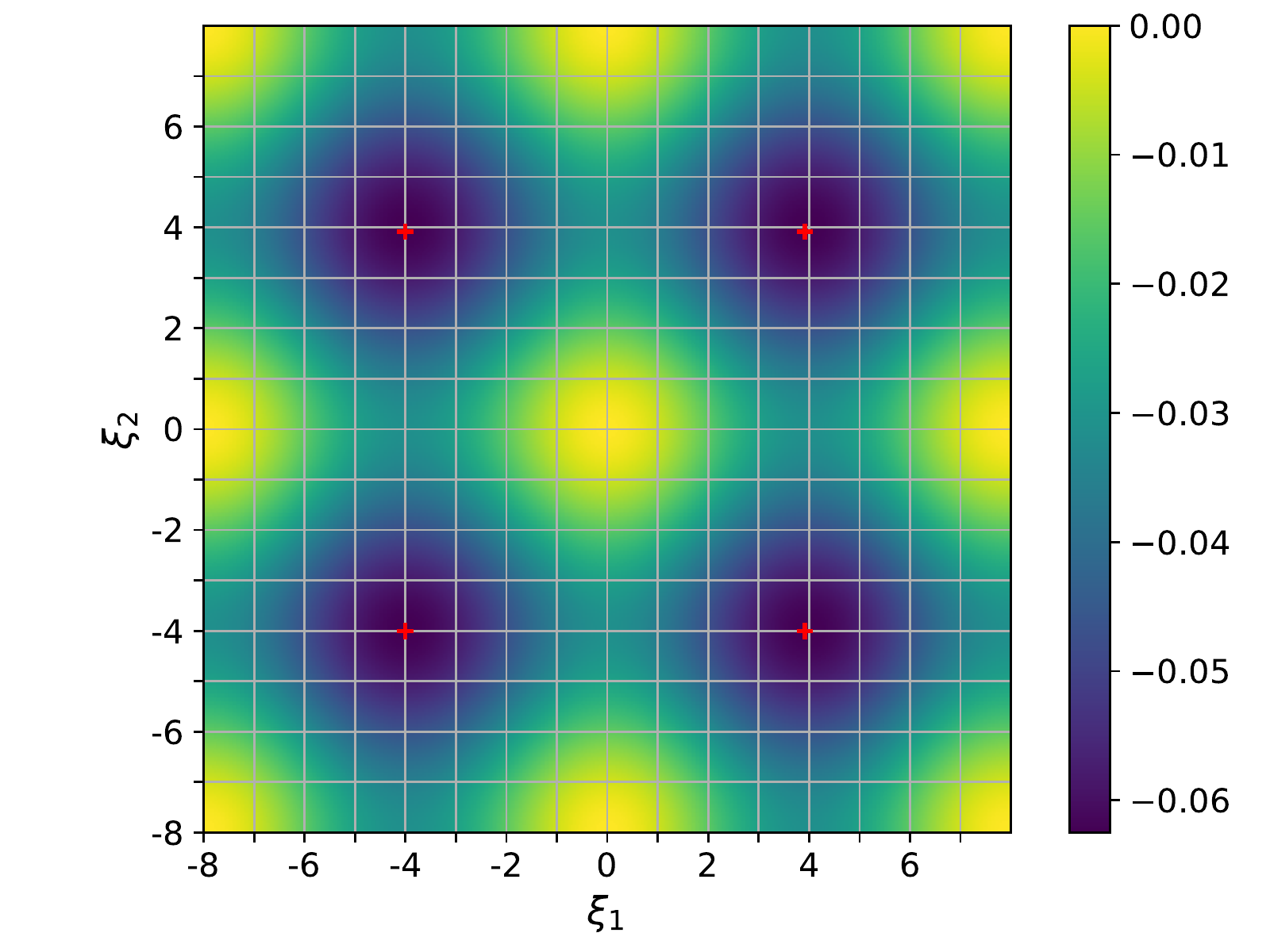}
\includegraphics[valign=m,width=.31\linewidth]{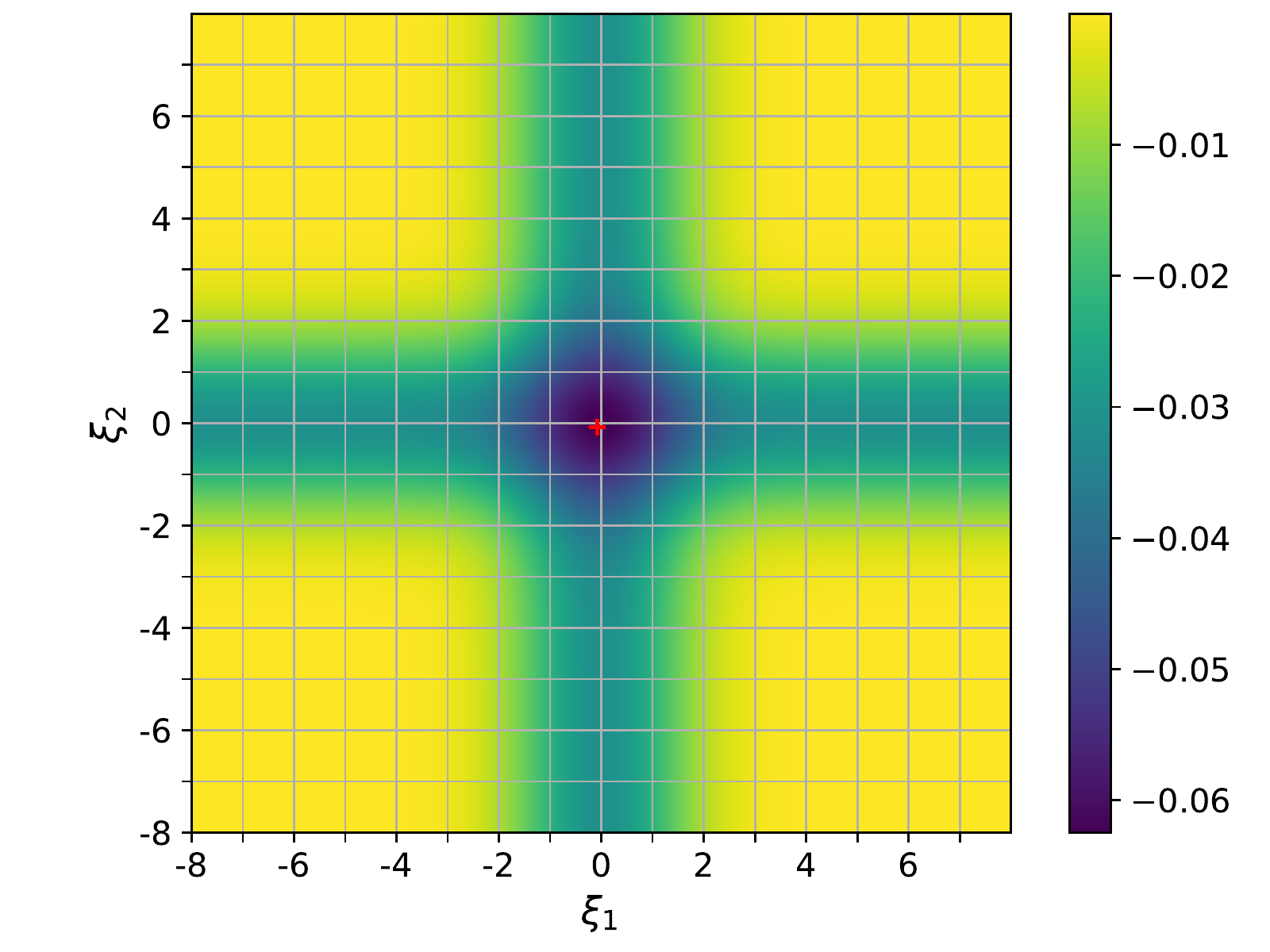} \\
\rotatebox[origin=c]{90}{\small$|\hat u|$} &
\includegraphics[valign=m,width=.31\linewidth]{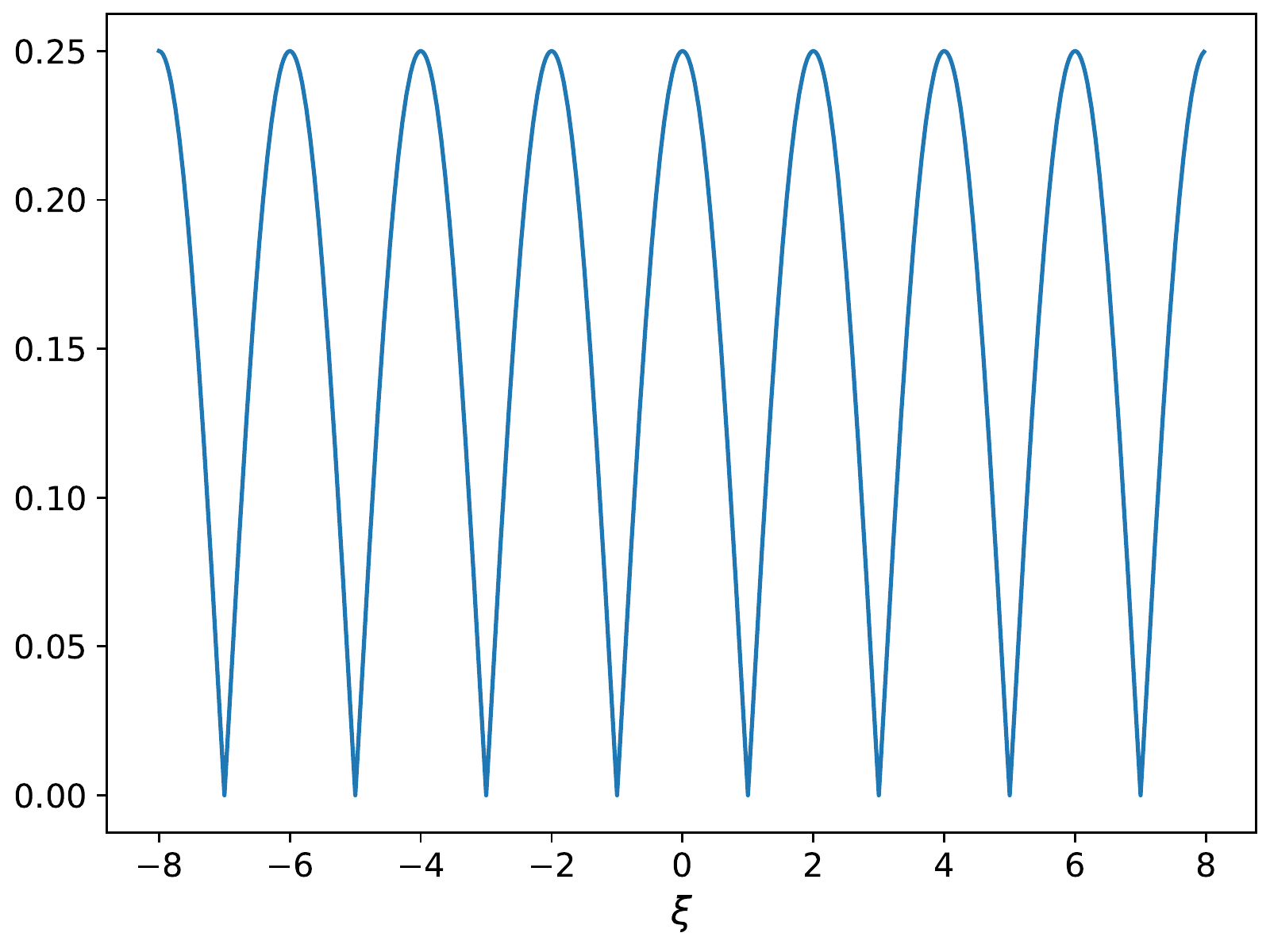}
\includegraphics[valign=m,width=.31\linewidth]{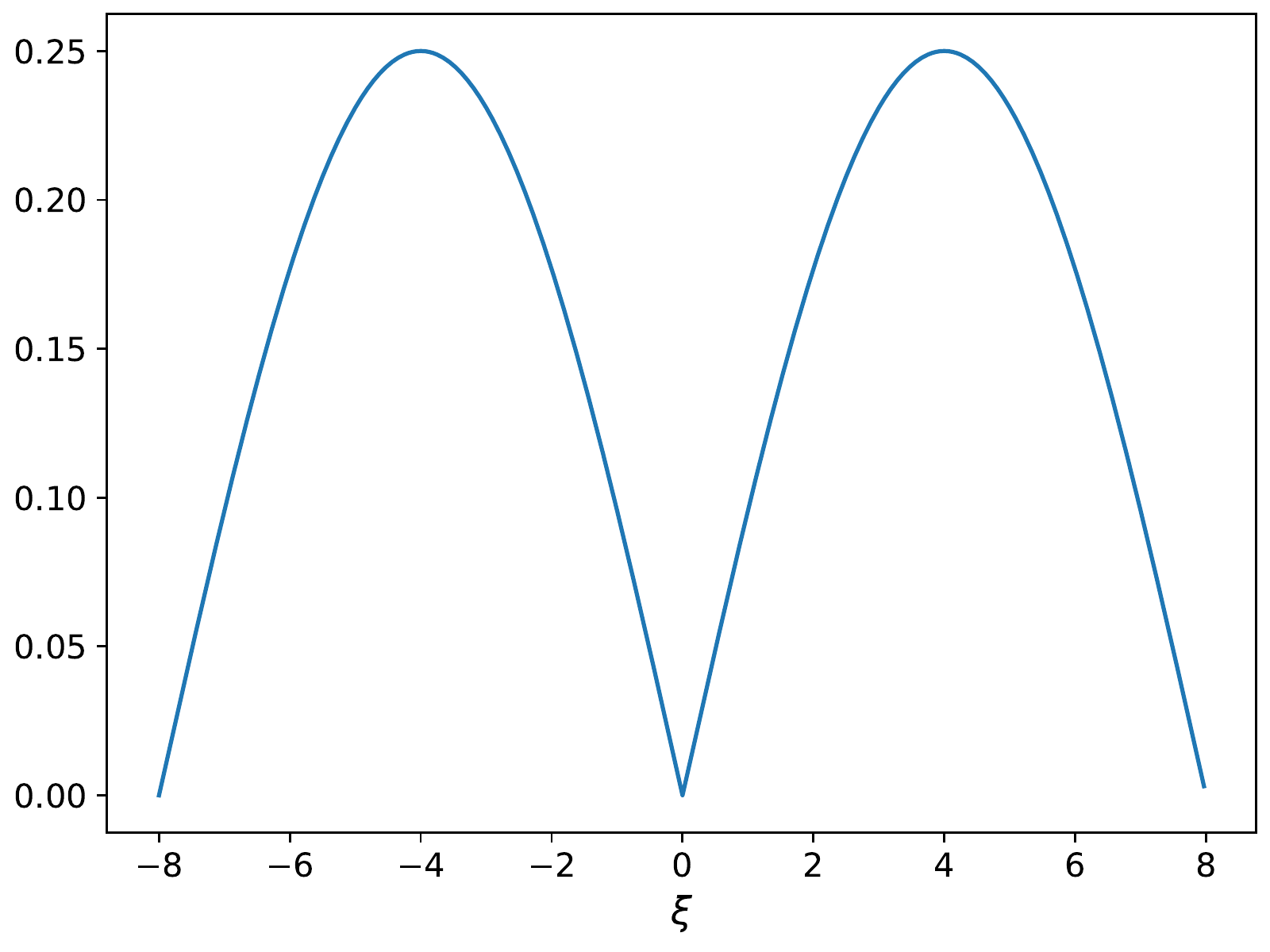}
\includegraphics[valign=m,width=.31\linewidth]{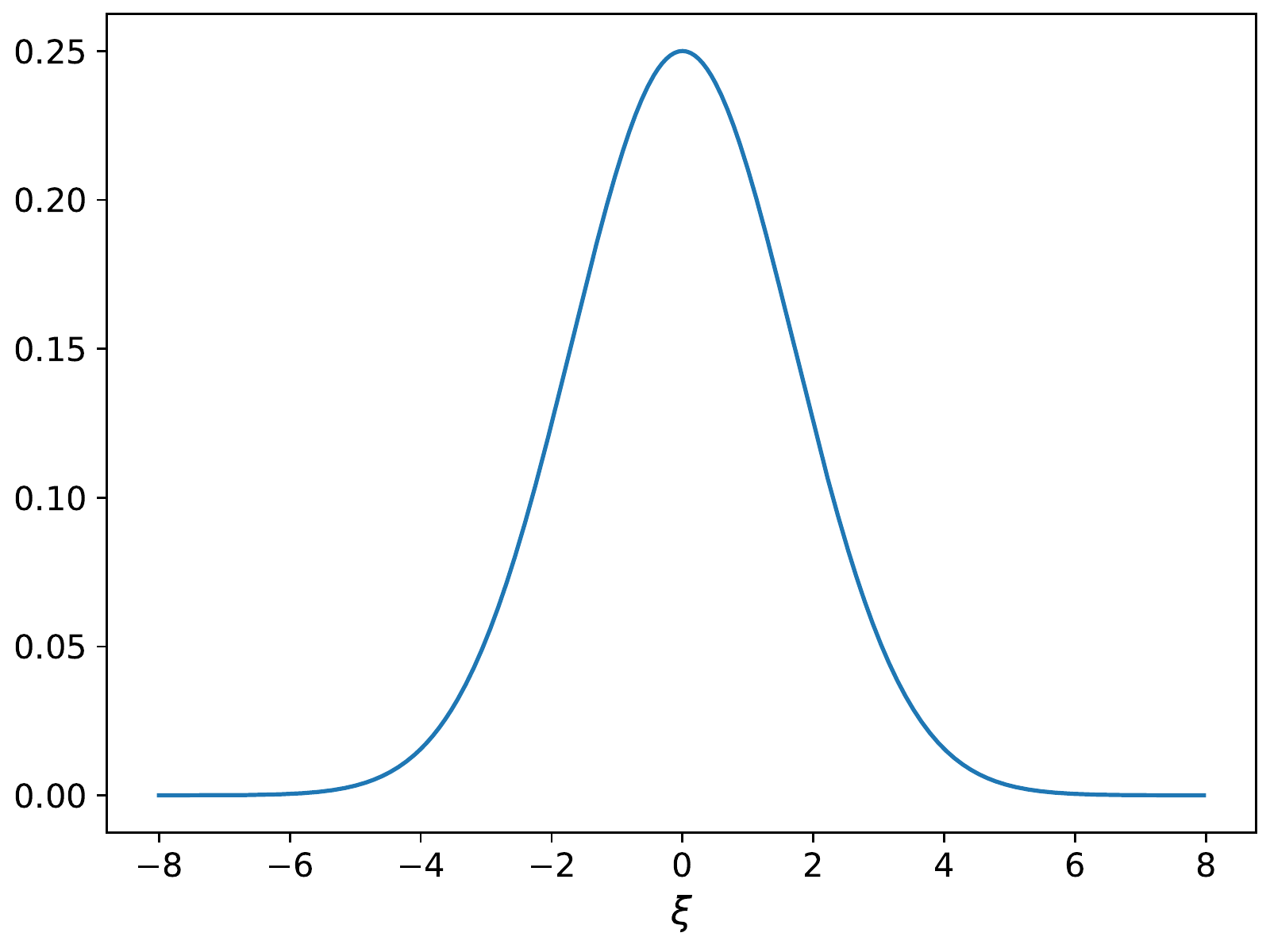}
\end{tabular}
\caption{The energy profile for $M=2$ and three different signals $\hat u$: a high frequency cosine, a low frequency sine and a Gaussian (from left to right). From top to bottom, we represent $J_1$, $J_2$, $G_1$, $G_2$, $F$ and $|\hat u|$. The red dots represent local minima.}
\label{fig:energy_profile_M2}
\end{figure*}

In Fig.~\ref{fig:energy_profile_noise}, the energy profile of $J_3$ is displayed with the low frequency signal (see Fig.~\ref{fig:energy_profile_M2} center column) for $M=2$ and for various noise levels $\sigma$ and regularization parameters $\lambda$.

Neither the noise, nor the regularization parameter $\lambda$ are able to remove the local minimizers of $J_3$ which are displayed by red dots.
The last column is a critical case where the noise prevails over the signal and the reconstruction error is high (typically $\sim 0.18$ in the noiseless setting and $\sim 0.5$ with $\sigma=5\times 10^{-1}$).

\begin{figure*}[htbp]
    \centering
    \begin{tabular}{@{}cc@{}c@{}c@{}c@{}}
    \rotatebox[origin=c]{90}{\small $\lambda=10^{1}$} &
    \includegraphics[valign=m,width=.24\linewidth]{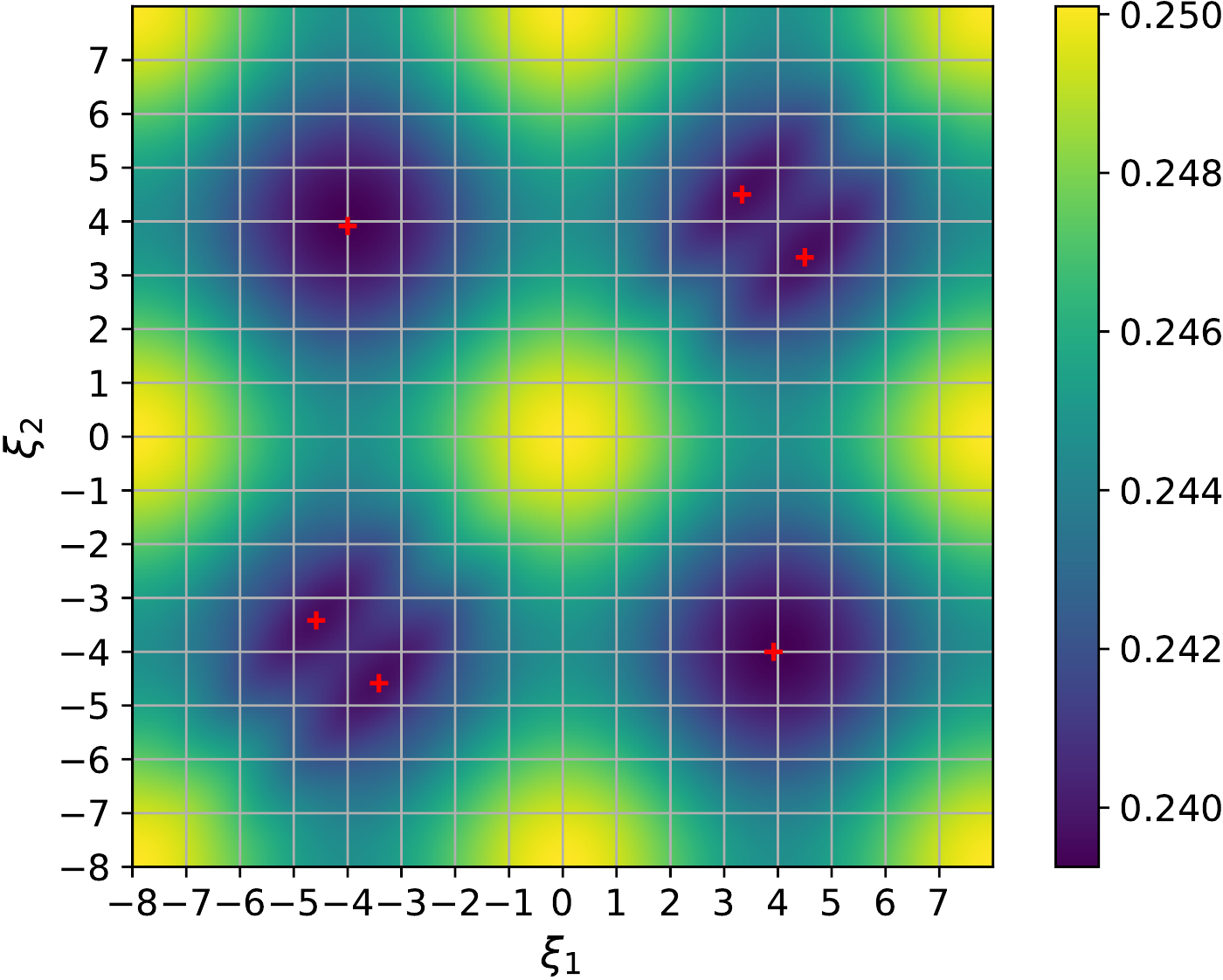} &
    \includegraphics[valign=m,width=.24\linewidth]{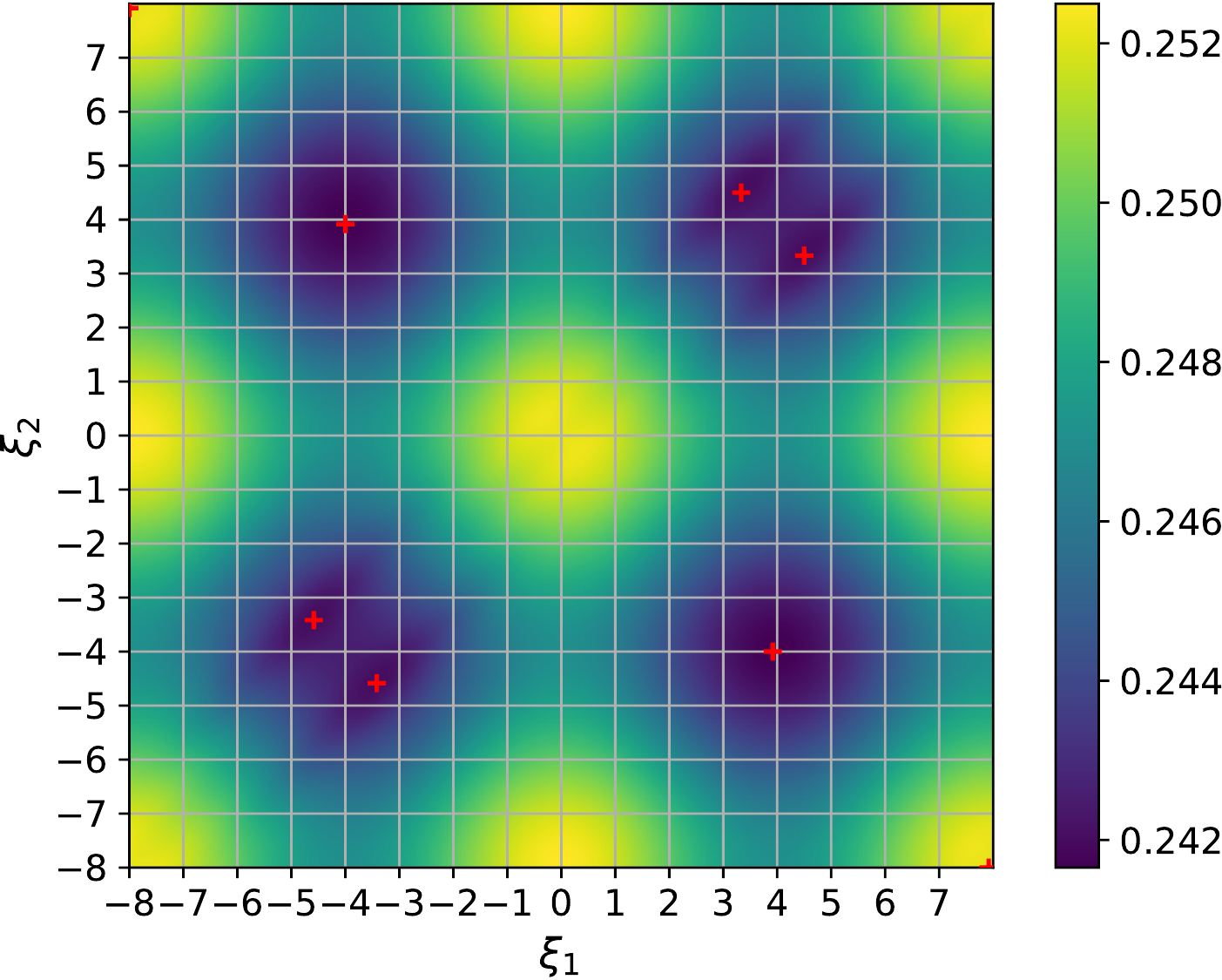} &
    \includegraphics[valign=m,width=.24\linewidth]{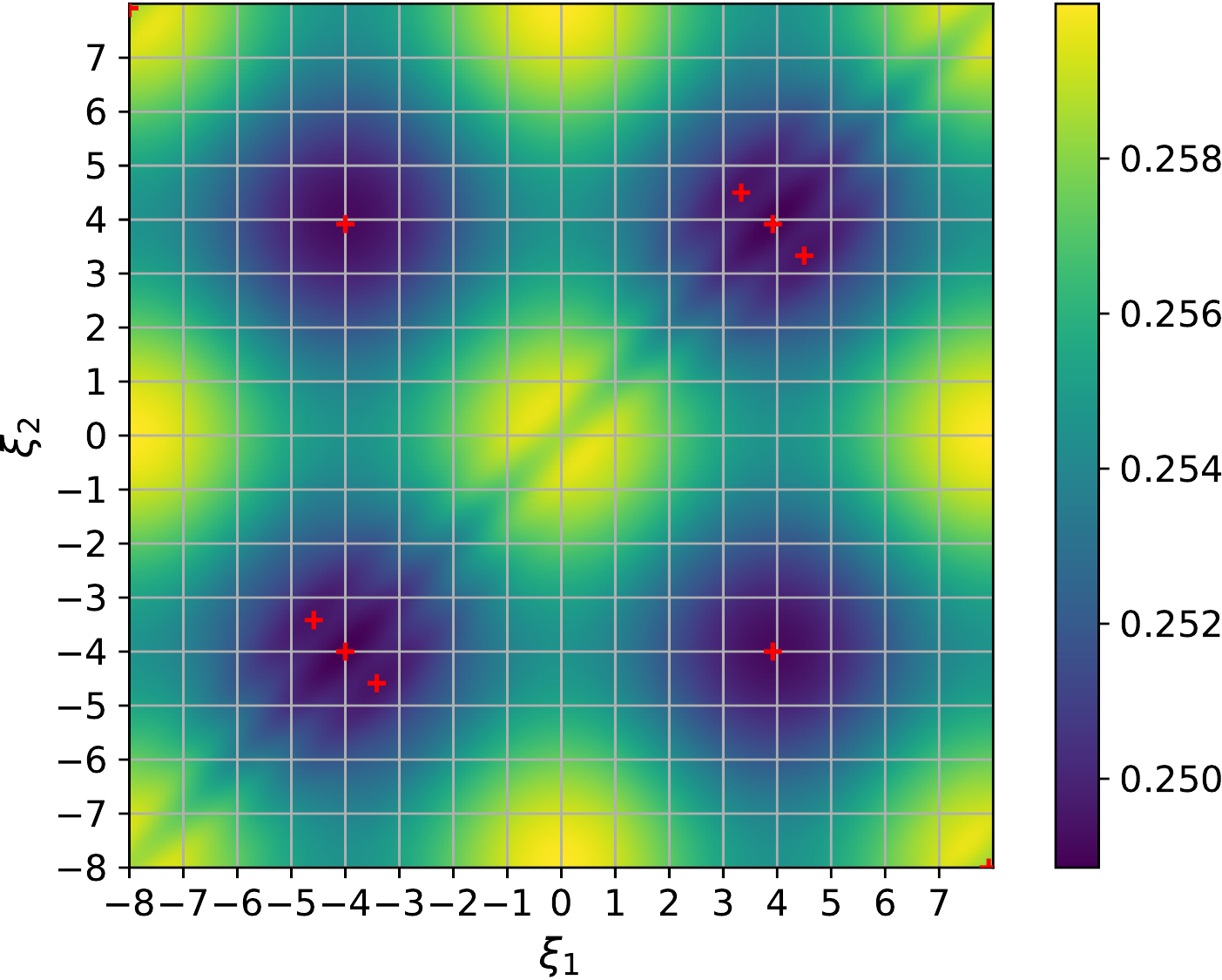} &
    \includegraphics[valign=m,width=.24\linewidth]{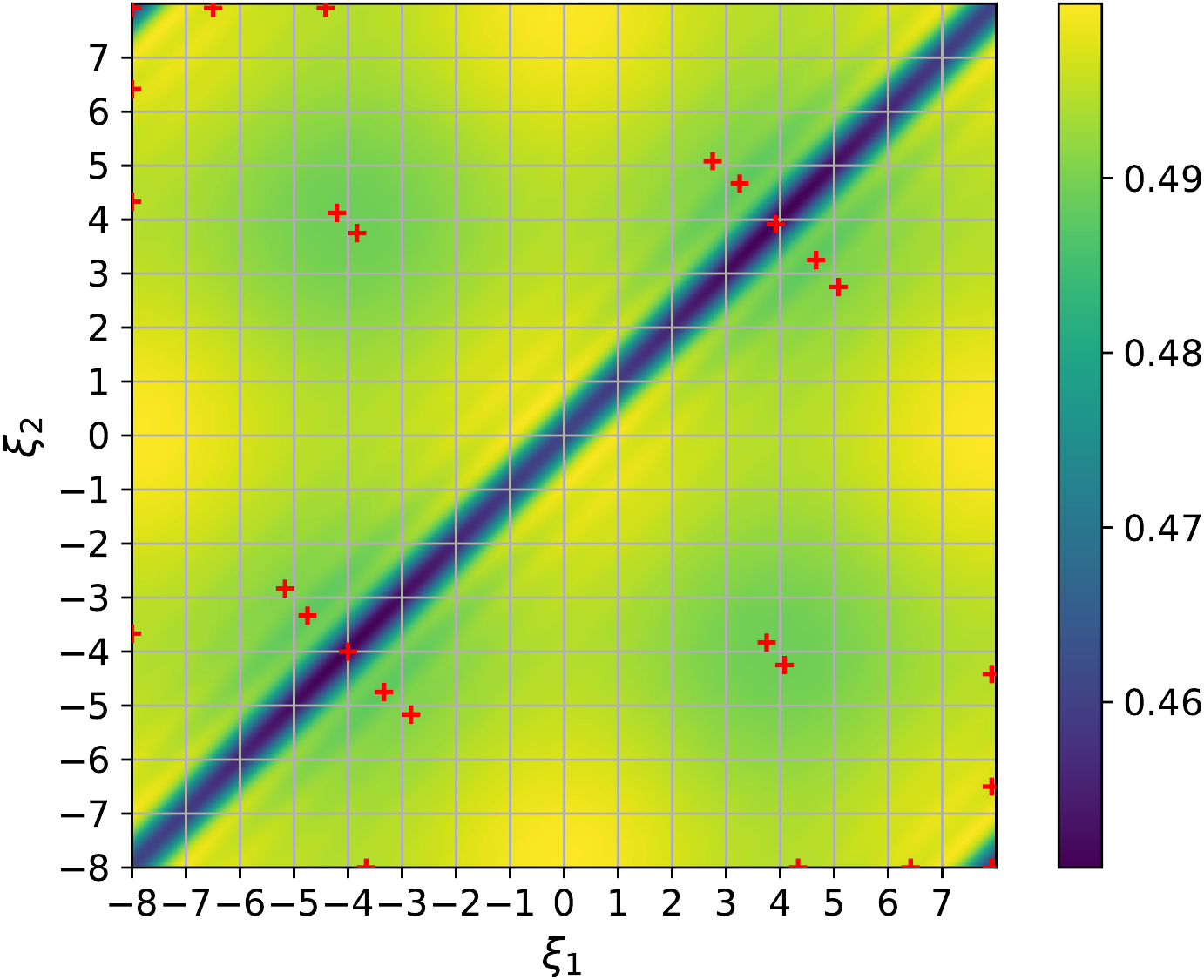} \\

    \rotatebox[origin=c]{90}{\small $\lambda=10^{0}$} &
    \includegraphics[valign=m,width=.24\linewidth]{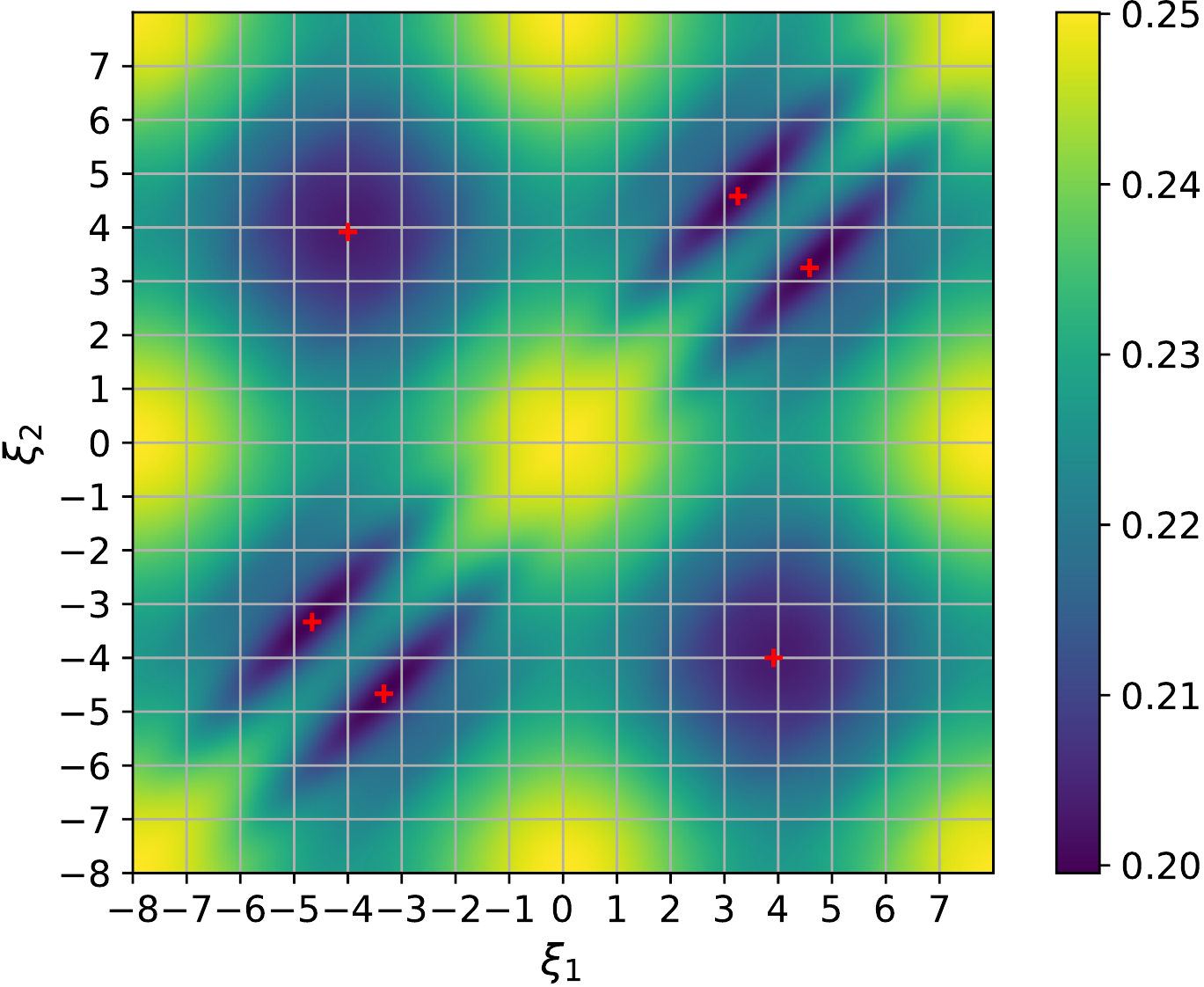} &
    \includegraphics[valign=m,width=.24\linewidth]{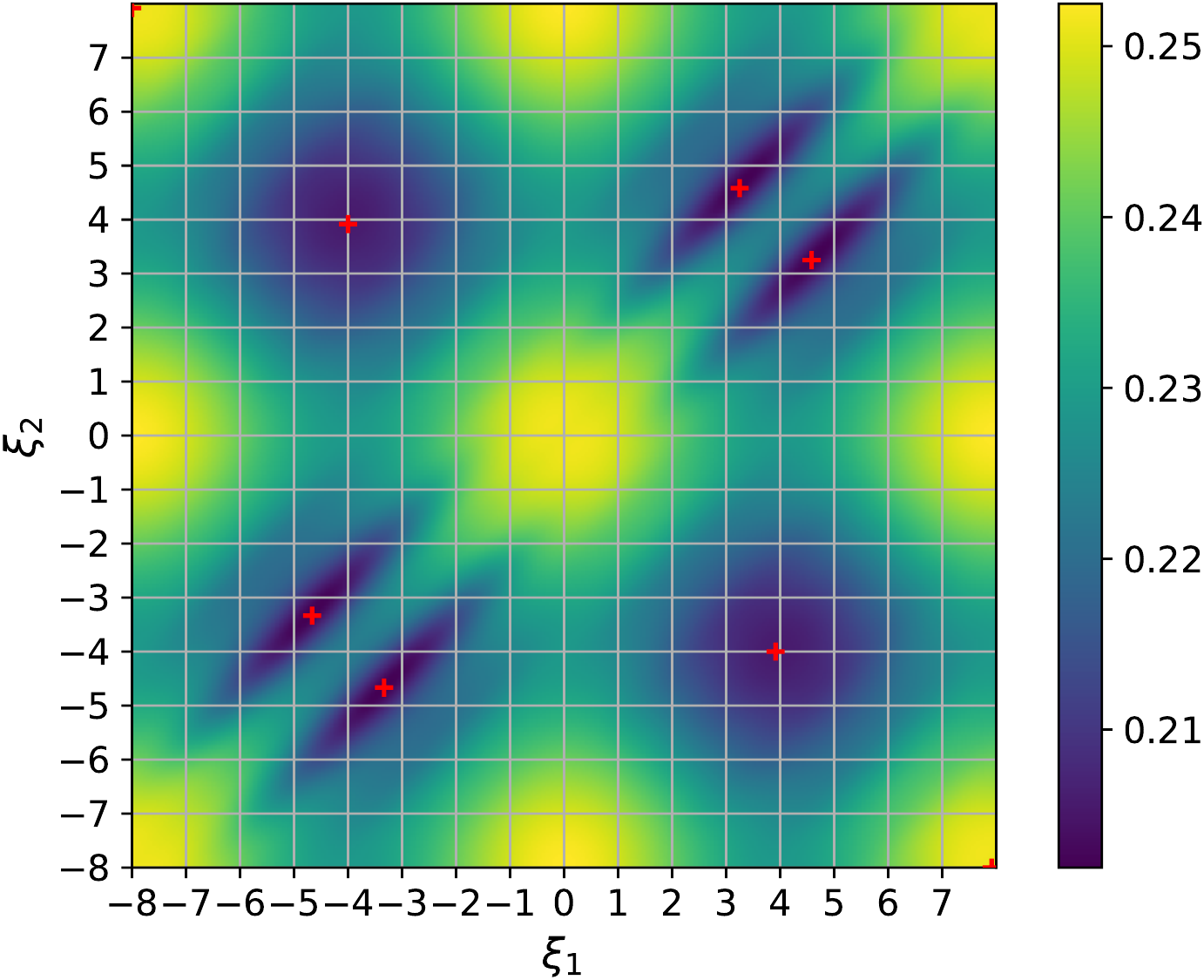} &
    \includegraphics[valign=m,width=.24\linewidth]{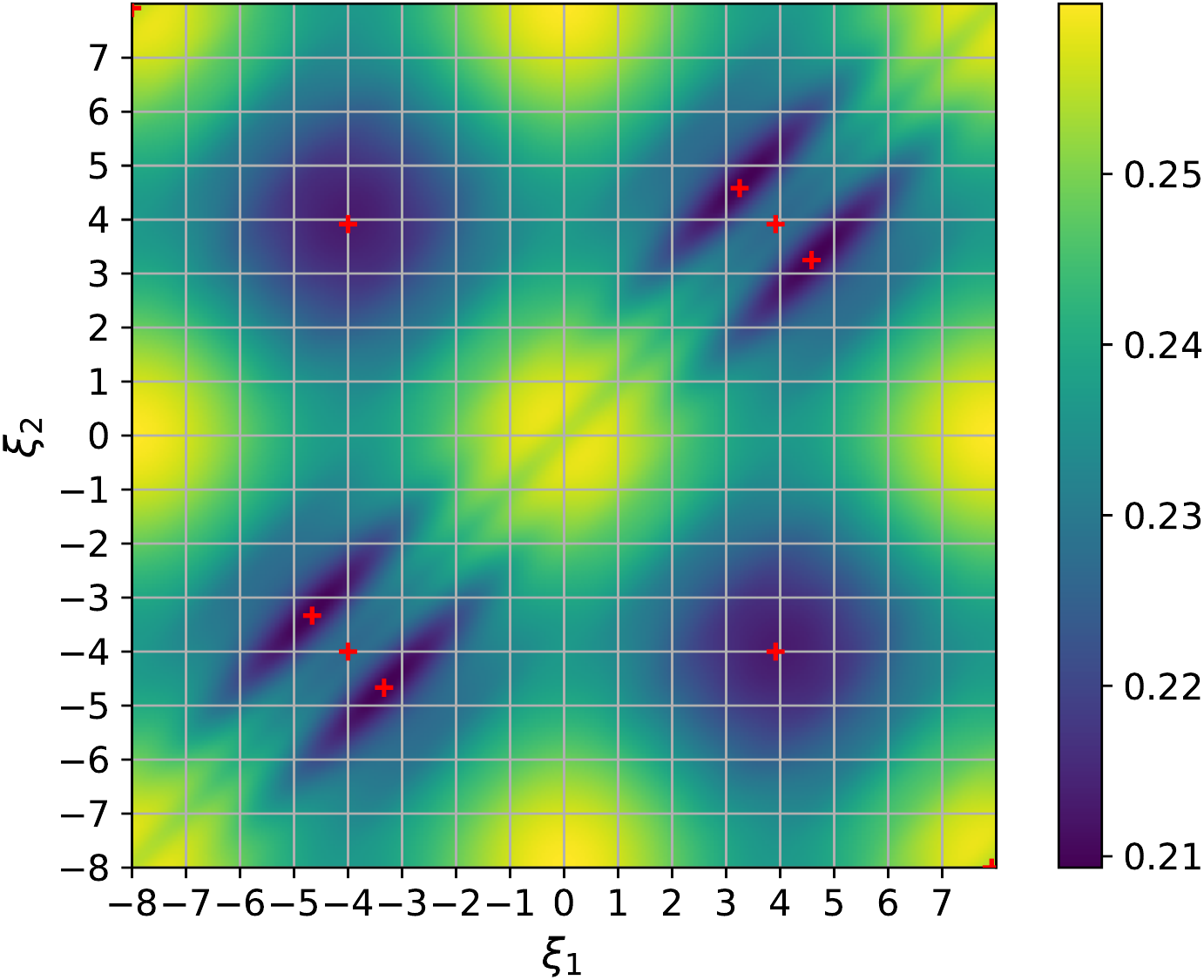} &
    \includegraphics[valign=m,width=.24\linewidth]{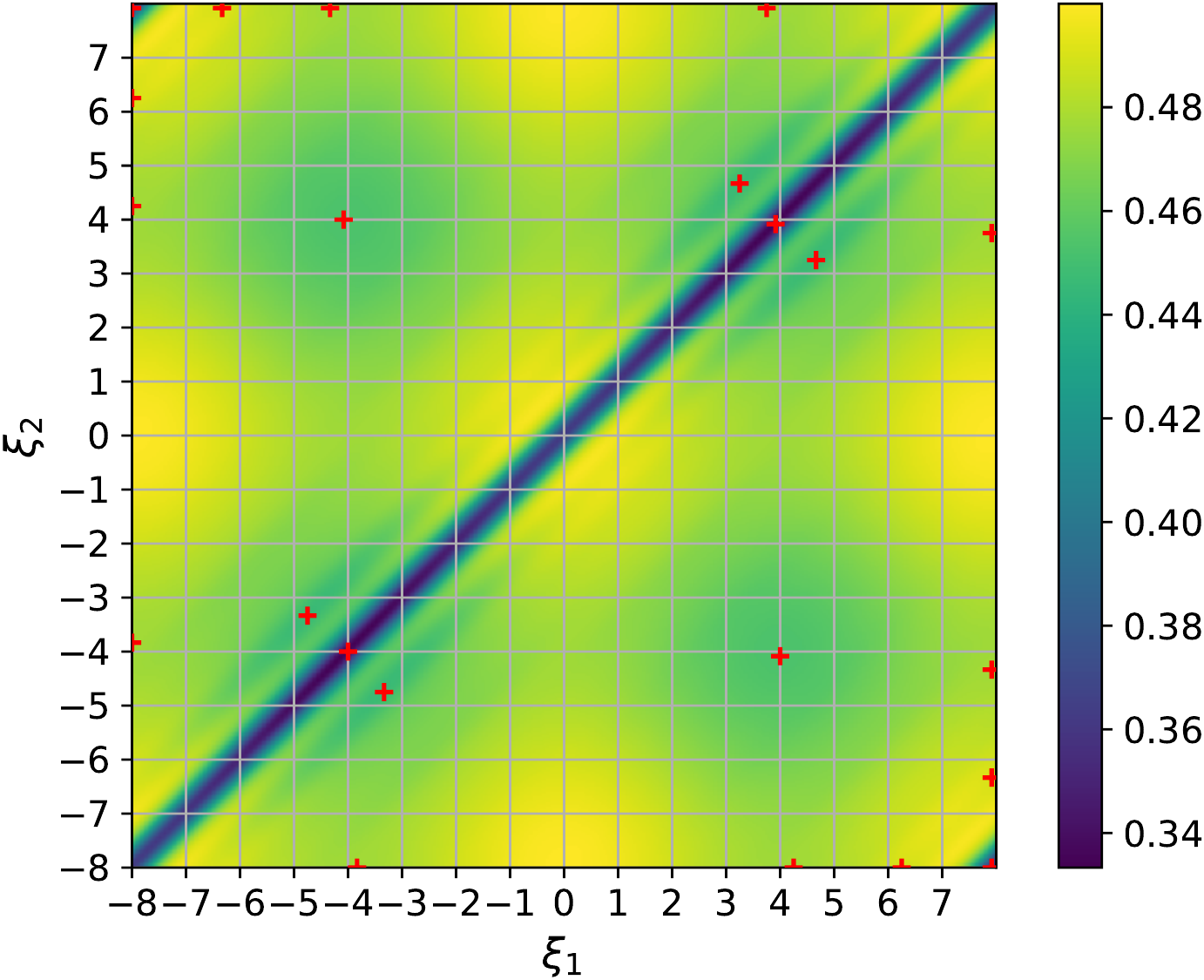} \\

    \rotatebox[origin=c]{90}{\small $\lambda=10^{-1}$} &
    \includegraphics[valign=m,width=.24\linewidth]{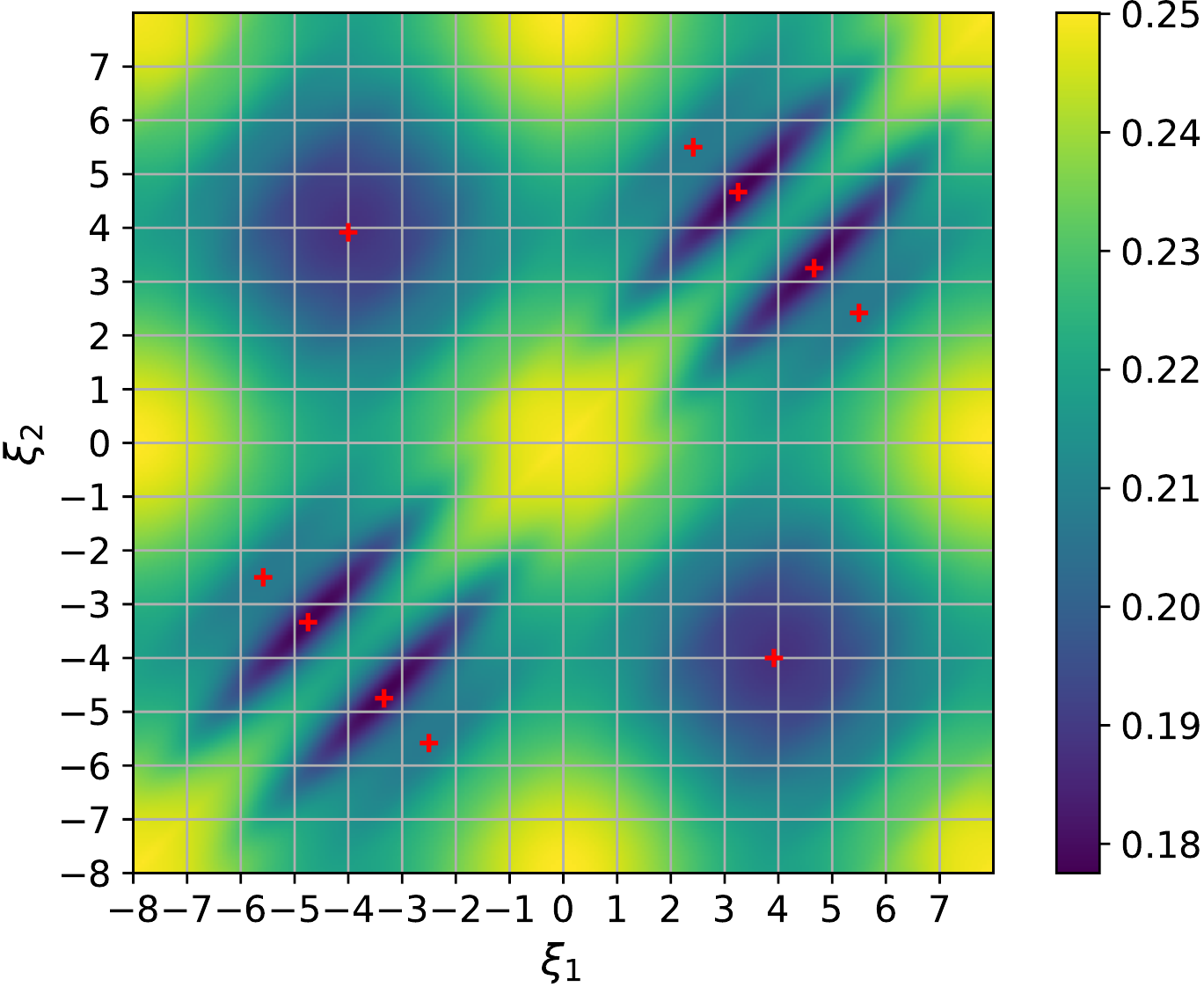} &
    \includegraphics[valign=m,width=.24\linewidth]{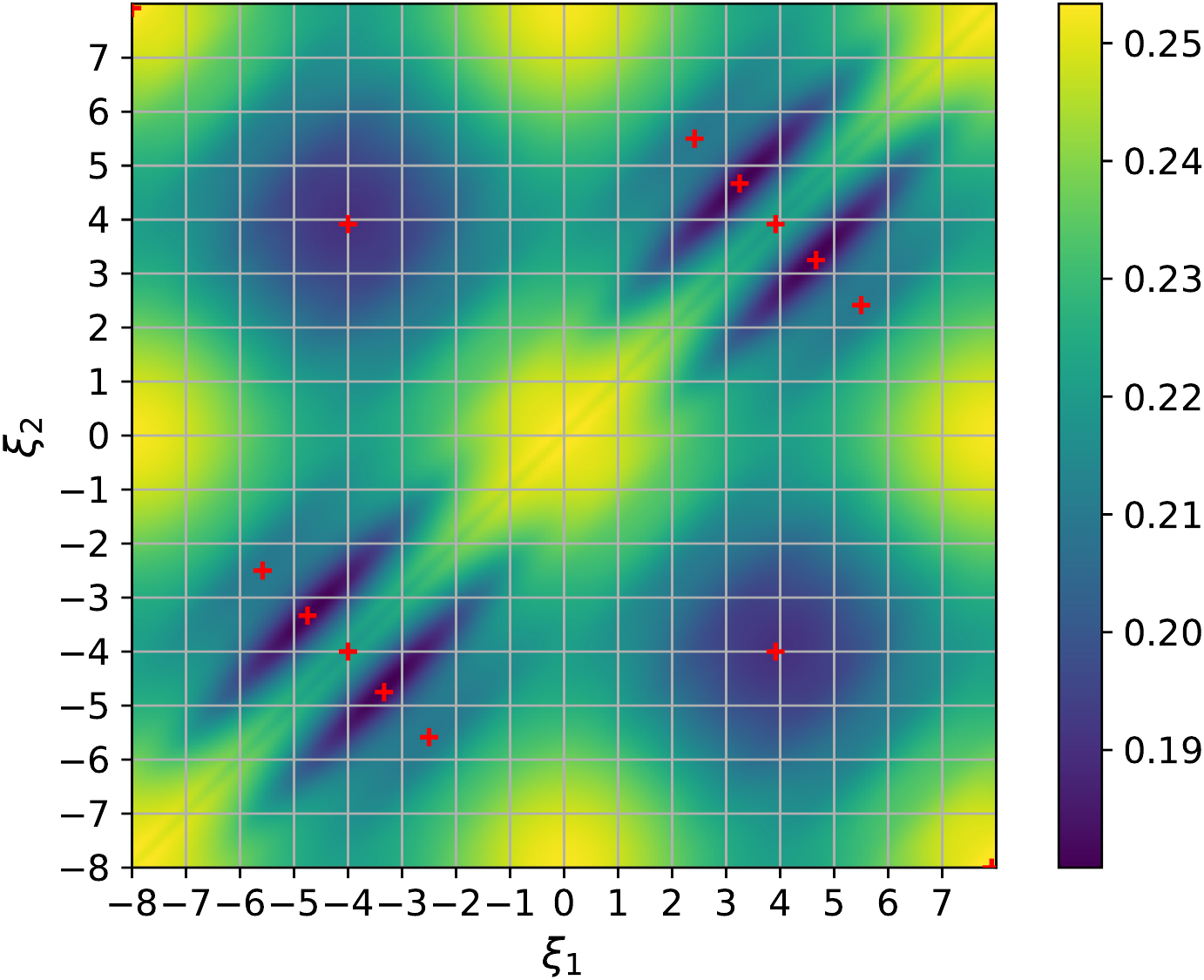} &
    \includegraphics[valign=m,width=.24\linewidth]{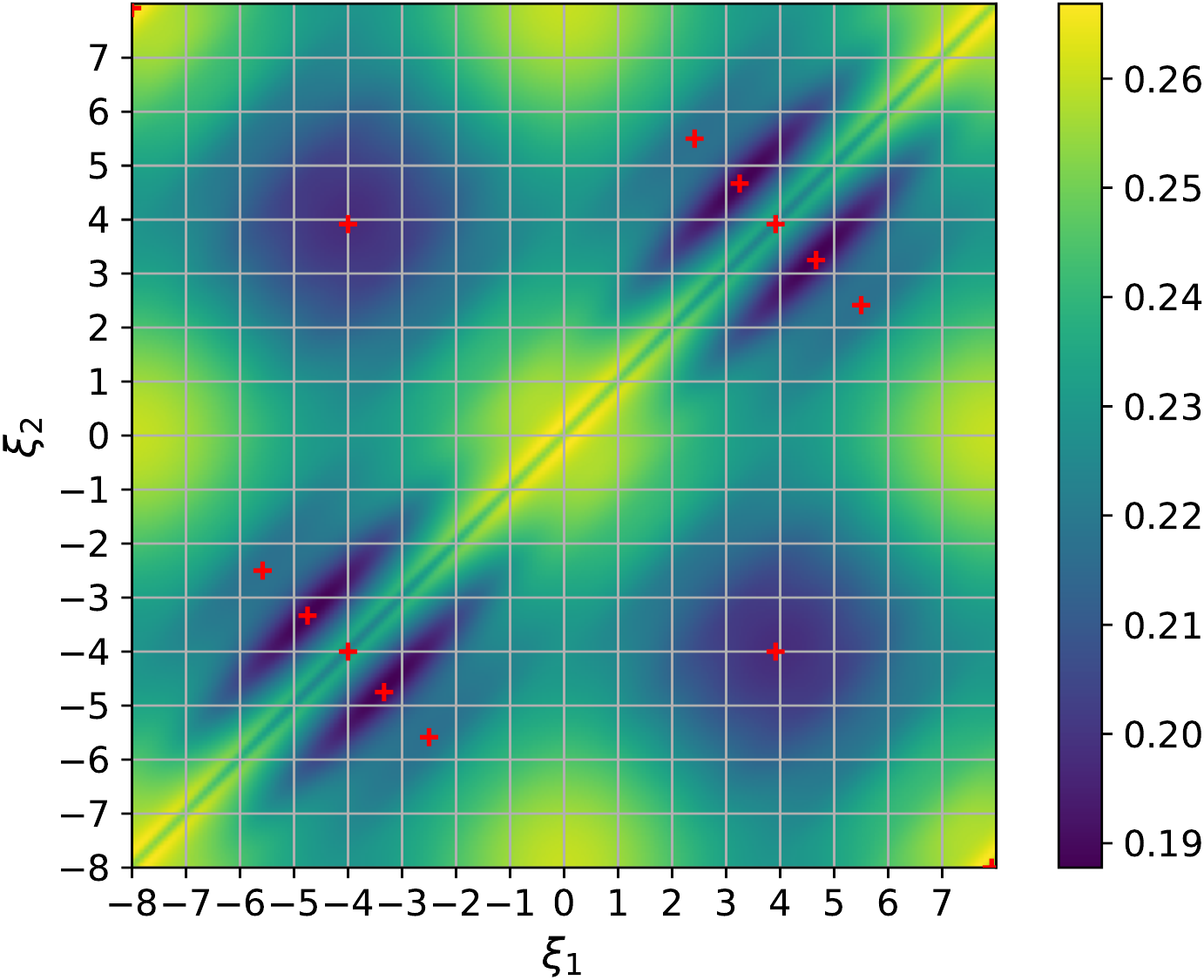} &
    \includegraphics[valign=m,width=.24\linewidth]{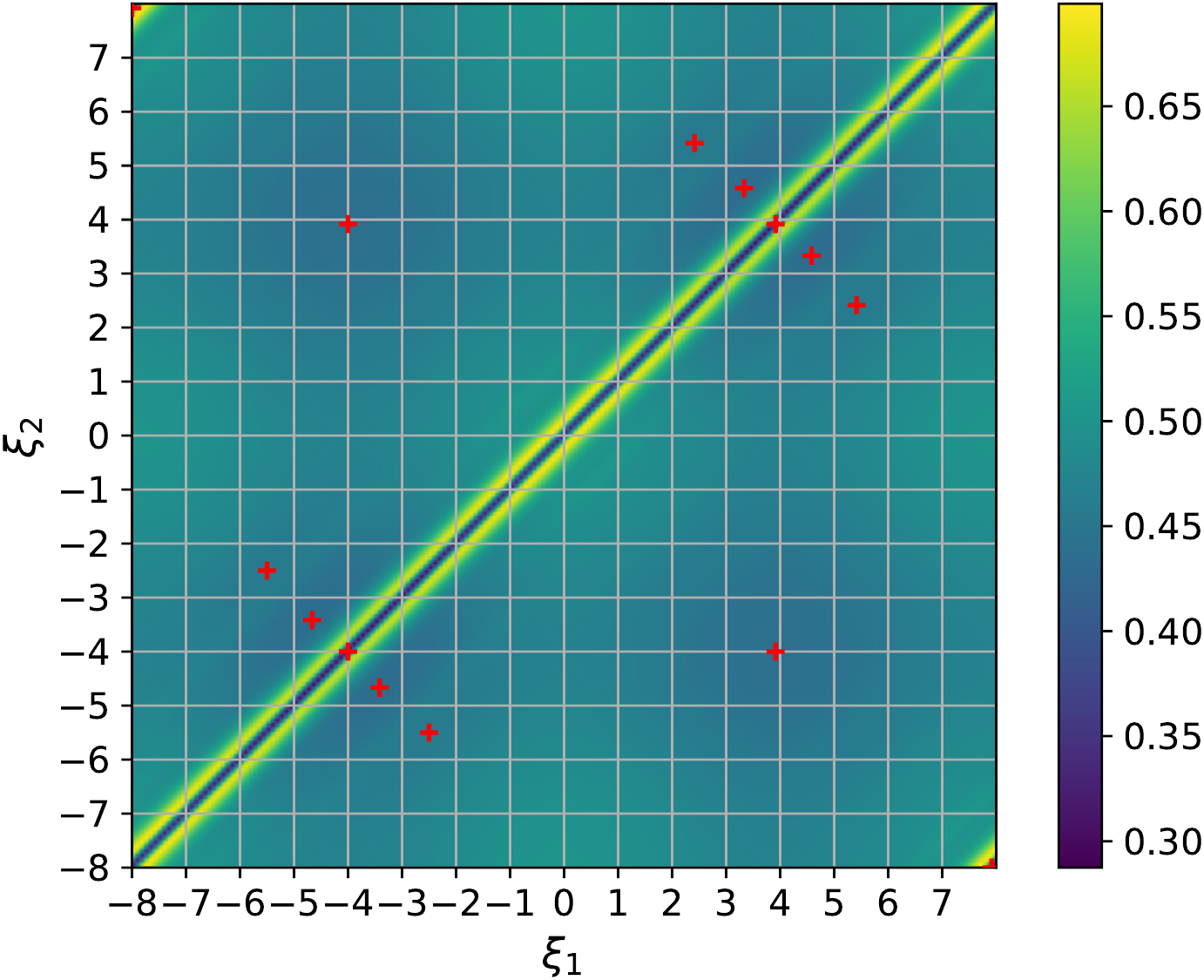} \\

    \rotatebox[origin=c]{90}{\small $\lambda=10^{-2}$} &
    \includegraphics[valign=m,width=.24\linewidth]{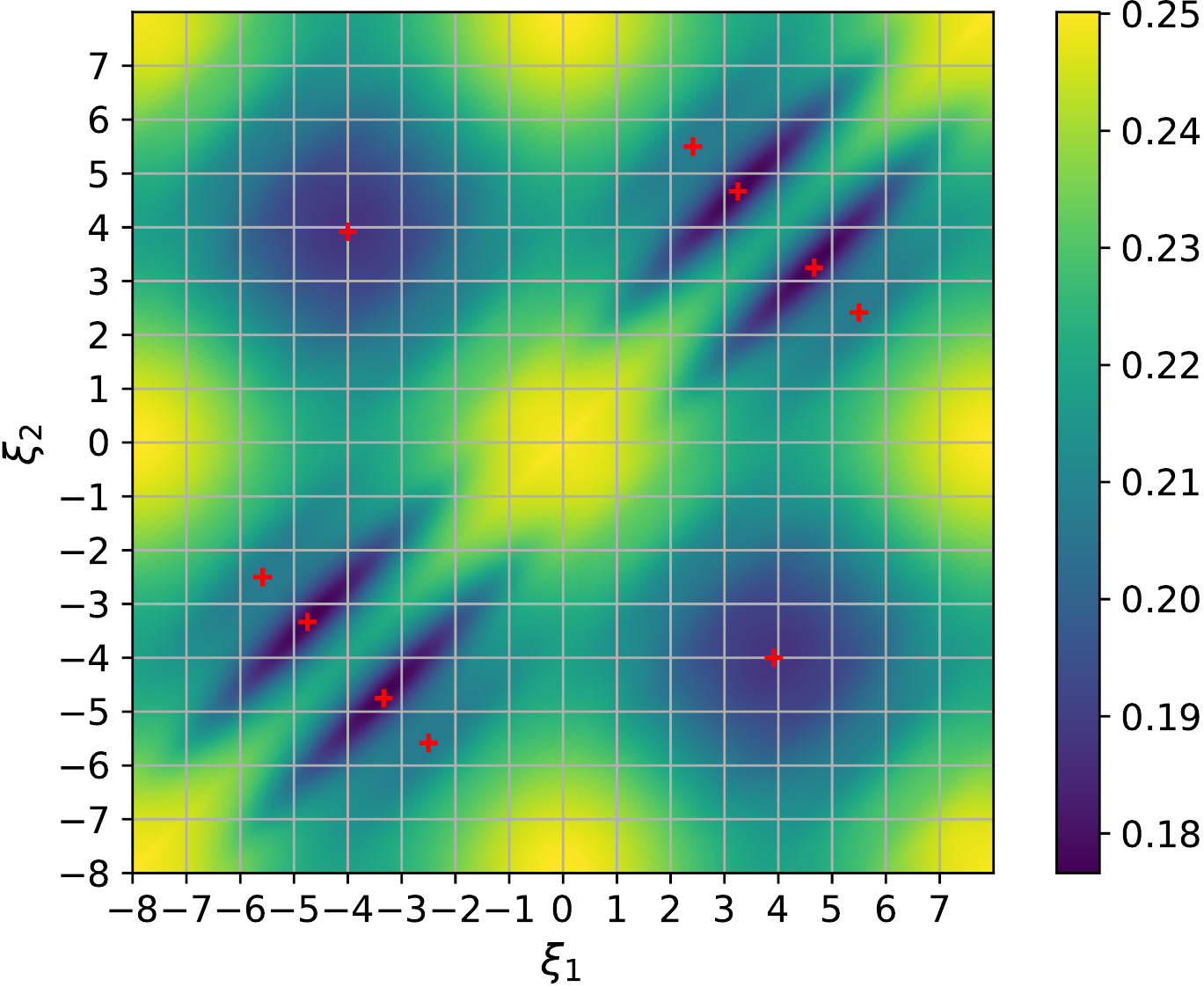} &
    \includegraphics[valign=m,width=.24\linewidth]{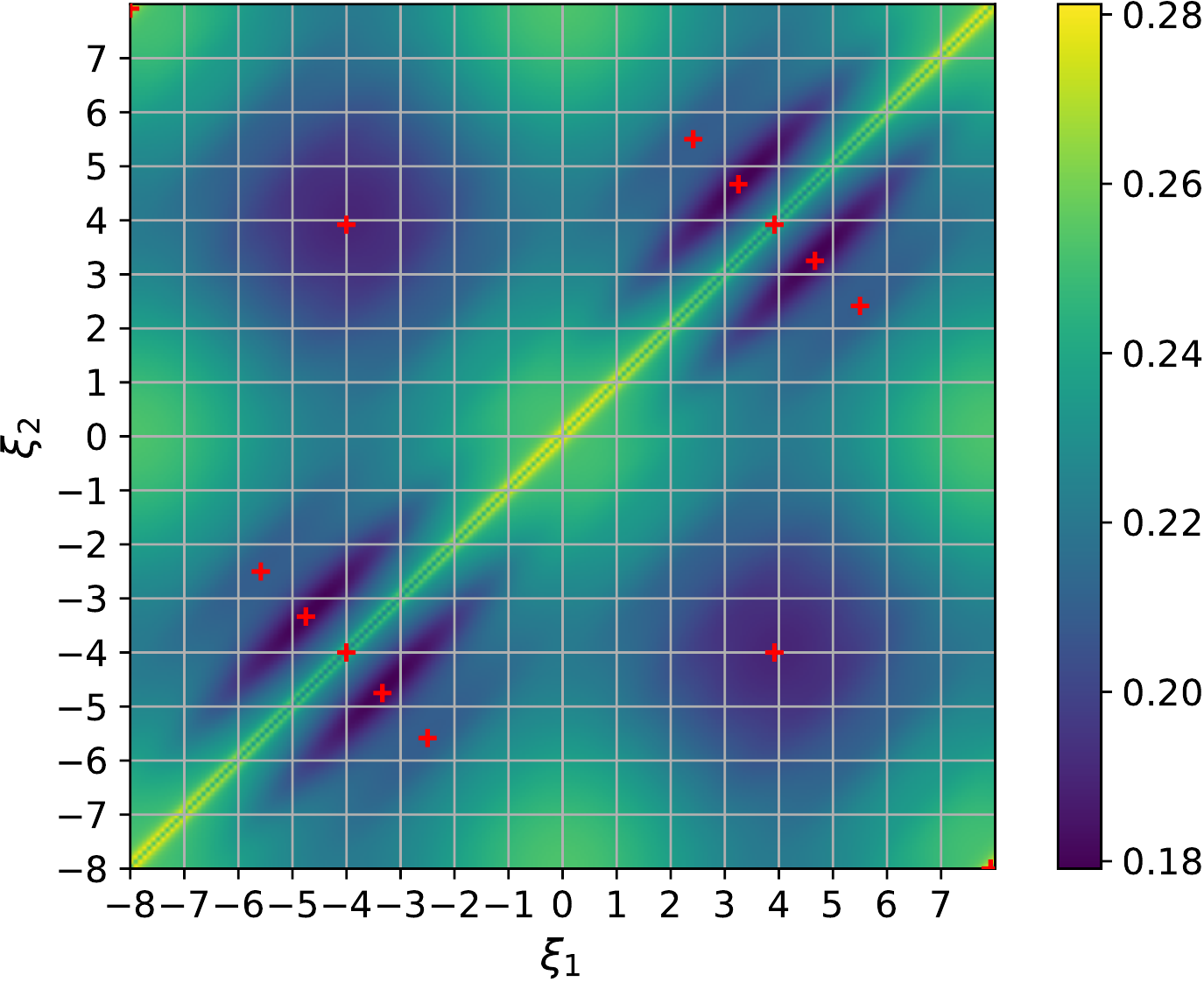} &
    \includegraphics[valign=m,width=.24\linewidth]{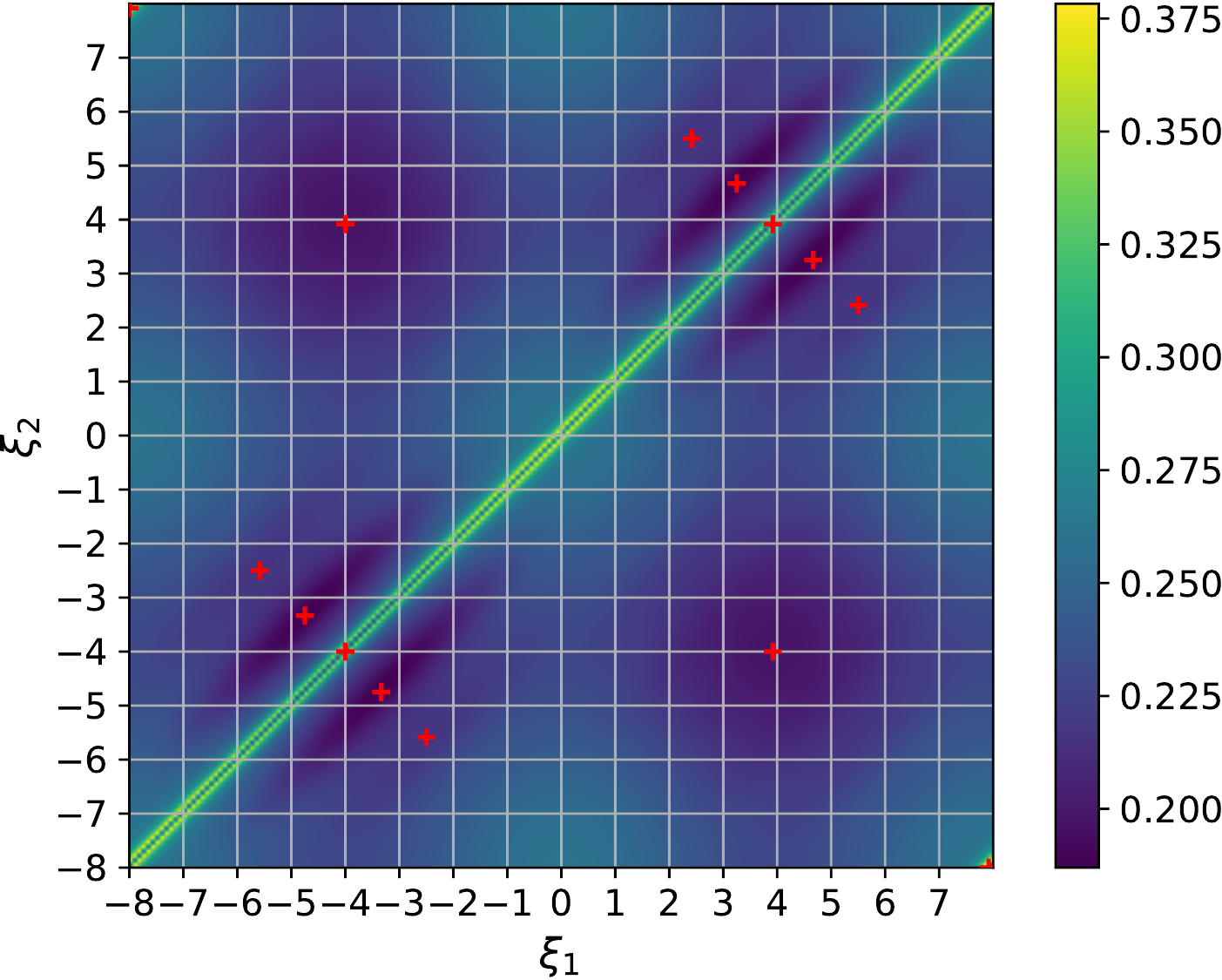} &
    \includegraphics[valign=m,width=.24\linewidth]{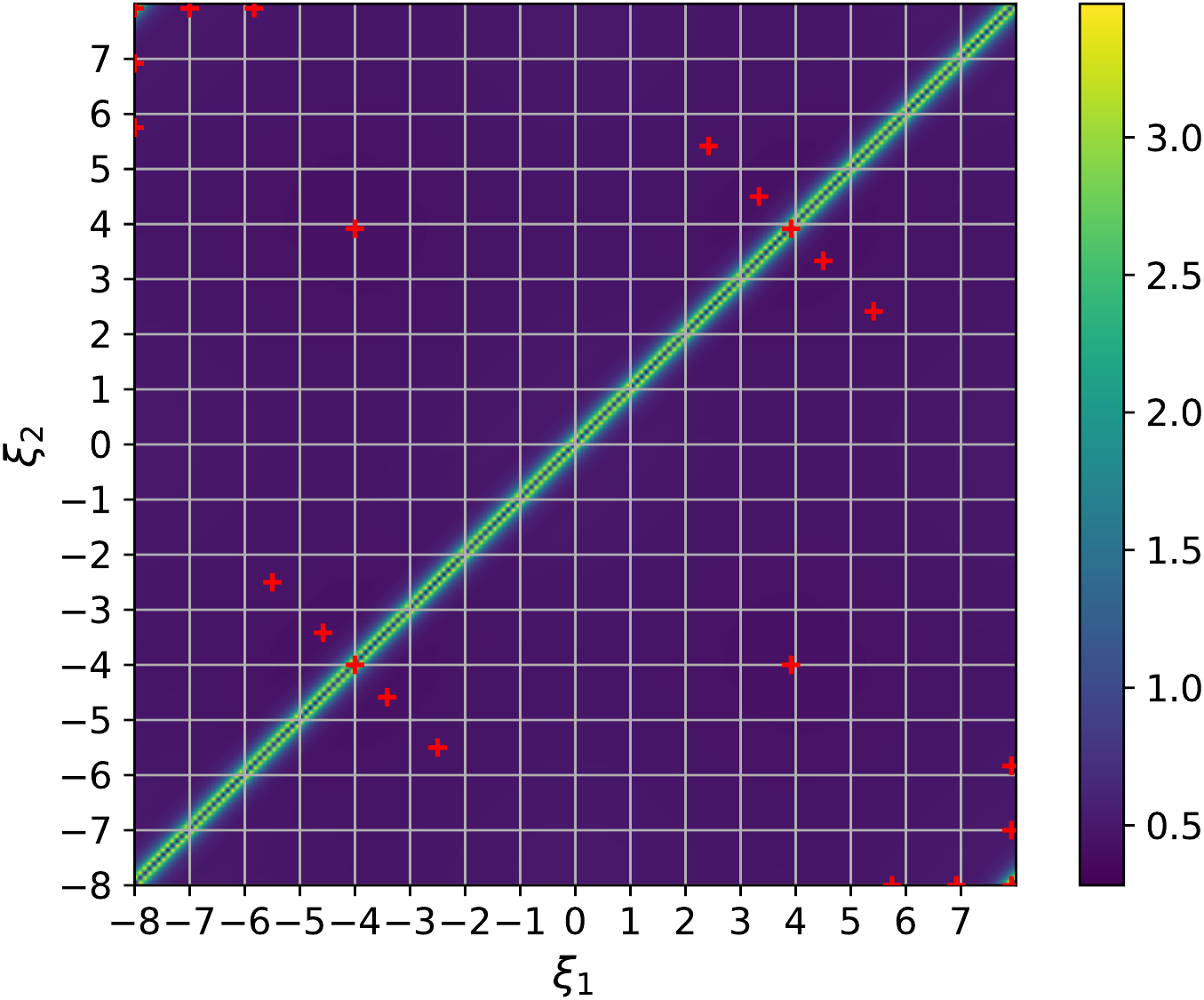} \\

    & $\sigma=1\times 10^{-2}$ &
    $\sigma=5\times 10^{-2}$ &
    $\sigma=1\times 10^{-1}$ &
    $\sigma=5\times 10^{-1}$
    \end{tabular}
    \caption{
    The energy profile of $J_3$ for $M=2$ with the low frequency signal (see center column of Fig.~\ref{fig:energy_profile_M2}) for different noise levels $\sigma$ and regularization $\lambda$.
    The red dots represent local minima.}
    \label{fig:energy_profile_noise}
\end{figure*}

\subsection{Flatness for high frequencies}\label{sec:flatness_HF}

In this paragraph we show that the partial derivatives of the cost function may vanish, for indexes corresponding to high frequencies. This explains another practical difficulty in Fourier sampling optimization: without using variable metric techniques, the sampling points located in the high frequencies move very slowly. Though our proof only applies to the function $J_1$, this effect also seems to occurs for $J_2$. See for instance the four corners of Fig.~\ref{fig:energy_profile_M2}, right.

\begin{proposition}\label{prop:expression_gradient}
Letting $r$ denote the residual error function
\begin{equation}
r(\Xi) = A(\Xi)A(\Xi)^*u-u,
\end{equation}
the gradient of the cost function $J_1$ reads:
\begin{equation}\label{eq:expression_grad_J1}
\nabla J_1(\Xi) = \Re\left(\nabla\left(\hat u(\Xi)\odot \overline{\hat r(\Xi)}\right)\right),
\end{equation}
where $\nabla$ in the right-hand-side denotes the usual derivative in 1D or the gradient in higher dimension and where $\odot$ is the coordinate-wise (Hadamard) product.
\end{proposition}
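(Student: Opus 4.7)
The plan is to first reduce $J_1$ to the squared residual plus an additive constant, and then differentiate the residual term by term using the chain and Leibniz rules.

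For the first step, since $R_1(\Xi) = A(\Xi)$, one has $R_1(\Xi)(A(\Xi)^*u + w) - u = r(\Xi) + A(\Xi)w$. Expanding the squared norm and using $\E_w(w)=0$ kills the cross term, while $\E_w(\|A(\Xi)w\|_2^2) = \sigma^2 \operatorname{tr}(A(\Xi)^*A(\Xi)) = \sigma^2 M$ since each column $a(\xi_m)$ has unit $\ell^2$ norm by the $1/\sqrt{N}$ normalization. Hence $J_1(\Xi) = \tfrac{1}{2}\|r(\Xi)\|_2^2 + \tfrac{\sigma^2 M}{2}$, and the noise contribution does not feel the gradient.

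For the second step, I differentiate $\tfrac12\|r(\Xi)\|_2^2$ with respect to the real scalar $\xi_m$ using the convention $\langle x,y\rangle = \sum_n \overline{x_n} y_n$, which gives $\partial_{\xi_m}\tfrac12\|r\|_2^2 = \Re\langle r, \partial_{\xi_m} r\rangle$. Writing $r(\Xi) = \sum_{m'} \hat u(\xi_{m'})\, a(\xi_{m'}) - u$, only the $m$-th summand depends on $\xi_m$, so $\partial_{\xi_m} r = \hat u'(\xi_m)\, a(\xi_m) + \hat u(\xi_m)\, a'(\xi_m)$, primes denoting ordinary differentiation with respect to the scalar frequency variable.

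For the third step, I observe that both inner products $\langle r, a(\xi)\rangle$ and $\langle r, a'(\xi)\rangle$ collapse to values of the continuous Fourier transform of $r$: from $\hat r(\xi) = \langle a(\xi), r\rangle$ one immediately gets $\langle r, a(\xi)\rangle = \overline{\hat r(\xi)}$ and, differentiating in $\xi$ with $r$ held fixed, $\langle r, a'(\xi)\rangle = \overline{\hat r'(\xi)}$. Substituting yields $\langle r, \partial_{\xi_m} r\rangle = \hat u'(\xi_m)\overline{\hat r(\xi_m)} + \hat u(\xi_m)\overline{\hat r'(\xi_m)}$, and the Leibniz rule read backwards identifies this with $\tfrac{d}{d\xi}[\hat u(\xi)\overline{\hat r(\xi)}]$ evaluated at $\xi_m$. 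Taking real parts and assembling the $M$ components produces the claimed formula.

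There is no deep obstacle: the argument is essentially one chain rule combined with one product rule. The only point requiring attention is the interpretation of the right-hand side, namely that $\nabla(\hat u(\Xi) \odot \overline{\hat r(\Xi)})$ denotes the vector whose $m$-th entry is the derivative of the scalar function $\xi \mapsto \hat u(\xi) \overline{\hat r(\xi)}$ evaluated at $\xi = \xi_m$, with the residual vector $r(\Xi)$ kept frozen while that single-variable derivative is computed; the extra dependencies of $r$ on the other $\xi_{m'}$ drop out because $\partial_{\xi_{m'}} r$ is orthogonal to $r$ only at the end, after collecting terms into the product $\hat u \,\overline{\hat r}$.
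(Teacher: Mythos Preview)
Your proof is correct and follows the same underlying route as the paper: both reduce to differentiating $\tfrac12\|r(\Xi)\|_2^2$ and then apply the product rule to land on $\hat u'\,\overline{\hat r}+\hat u\,\overline{\hat r'}$. The presentation differs only in that the paper packages the computation variationally via $J_1(\Xi+\epsilon)$ and Jacobians of $A,A^*$, whereas you first split off the noise contribution explicitly (which the paper leaves implicit) and then differentiate coordinate by coordinate using $r=\sum_{m'}\hat u(\xi_{m'})a(\xi_{m'})-u$; your version is arguably the more transparent of the two.

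One cosmetic remark: your final paragraph about ``extra dependencies of $r$ on the other $\xi_{m'}$'' and orthogonality is muddled and unnecessary. There is no hidden cancellation to explain: when you compute $\partial_{\xi_m}r$, only the $m$-th summand contributes, and the quantity $\hat r'(\xi_m)$ appearing in the result is by definition the derivative of $\xi\mapsto\langle a(\xi),r\rangle$ with $r=r(\Xi)$ held fixed, which is exactly the meaning of the $\nabla$ on the right-hand side of the statement. You already said this clearly in the preceding sentence; the closing remark only obscures it.
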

The proof of Proposition~\ref{prop:expression_gradient} is postponed to Section~\ref{sec:proof_expression_gradient}.

\begin{theorem}[Vanishing gradients for high frequencies]\label{thm:gradient_flatness}
Consider a signal $u\in\C^N$ and a point configuration $\Xi\in \R^M$. Under the decay assumptions
\begin{equation}
|\hat u(\xi)|\lesssim\frac{1}{|\xi|^\alpha}\quad\textrm{ and }\quad|\hat u'(\xi)|\lesssim\frac{1}{|\xi|^\alpha},
\end{equation}
with $\alpha>0$, we have
\begin{equation}
\left|\frac{\partial J_1(\Xi)}{\partial\xi_m}\right| \lesssim \frac{\|\hat u(\Xi)\|_1}{\md(\Xi) |\xi_m|^\alpha}.
\end{equation}
\end{theorem}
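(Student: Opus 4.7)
The plan is to apply the gradient identity of Proposition~\ref{prop:expression_gradient} coordinate-wise:
\[
\frac{\partial J_1}{\partial \xi_m} = \Re\bigl[\hat u'(\xi_m)\overline{\hat r(\xi_m)} + \hat u(\xi_m)\overline{\hat r'(\xi_m)}\bigr].
\]
Since the decay hypothesis already bounds $|\hat u(\xi_m)|$ and $|\hat u'(\xi_m)|$ by $1/|\xi_m|^\alpha$, it suffices to show that $|\hat r(\xi_m)|$ and $|\hat r'(\xi_m)|$ are each controlled by $\|\hat u(\Xi)\|_1/\md(\Xi)$.

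Next, I would write $r = A(\Xi)\hat u(\Xi) - u$ and use $\hat r(\xi) = a(\xi)^* r$ to obtain the explicit representation
\[
\hat r(\xi) = \sum_{m'=1}^M D(\xi_{m'} - \xi)\,\hat u(\xi_{m'}) - \hat u(\xi),
\]
where $D(\eta) = a(\xi)^* a(\xi+\eta) = e^{-\iota\pi\eta/N}\sin(\pi\eta)/(N\sin(\pi\eta/N))$ is a shifted Dirichlet kernel with $D(0)=1$. Evaluating at $\xi=\xi_m$ cancels the diagonal term, leaving $\hat r(\xi_m) = \sum_{m'\neq m} D(\xi_{m'}-\xi_m)\hat u(\xi_{m'})$. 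All torus-distances here are at least $\md(\Xi)\geq 1$, so the elementary bound $|\sin(\pi\eta/N)|\geq 2|\eta|/N$ yields $|D(\eta)|\leq 1/(2\md(\Xi))$, hence $|\hat r(\xi_m)|\leq \|\hat u(\Xi)\|_1/(2\md(\Xi))$. Differentiating the same representation in $\xi$ and evaluating at $\xi_m$ produces
\[
\hat r'(\xi_m) = -D'(0)\hat u(\xi_m) - \sum_{m'\neq m} D'(\xi_{m'}-\xi_m)\hat u(\xi_{m'}) - \hat u'(\xi_m).
\]
A direct quotient-rule computation gives $|D'(\eta)|\lesssim 1/\md(\Xi)$ for $|\eta|\geq \md(\Xi)$, and $|D'(0)|=\pi/N\leq \pi/\md(\Xi)$ (using $\md(\Xi)\leq N$), so $|\hat r'(\xi_m)|\lesssim \|\hat u(\Xi)\|_1/\md(\Xi) + |\hat u'(\xi_m)|$.

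Combining these bounds with the decay assumptions yields the principal contribution $\|\hat u(\Xi)\|_1/(\md(\Xi)|\xi_m|^\alpha)$; the only residual is the cross term $|\hat u(\xi_m)||\hat u'(\xi_m)|\lesssim 1/|\xi_m|^{2\alpha}$, which is of lower order and is absorbed via $|\hat u(\xi_m)|\leq \|\hat u(\Xi)\|_1$. The main technical obstacle is the derivative estimate $|D'(\eta)|\lesssim 1/\md(\Xi)$, proven by differentiating the Dirichlet quotient and using $|\sin(\pi\eta/N)|\geq 2|\eta|/N$ together with $|\cos|\leq 1$ in both terms produced by the quotient rule. The physical content is transparent: $\hat r(\xi_m)$ measures the residual interference between sampling points separated by at least $\md(\Xi)$, so it shrinks with $\md$, while the $1/|\xi_m|^\alpha$ factor originates solely from the decay of $\hat u$ and $\hat u'$ at the sampling location itself.
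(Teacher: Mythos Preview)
Your approach is essentially the same as the paper's: both start from Proposition~\ref{prop:expression_gradient}, bound $|\hat r(\xi_m)|$ and $|\hat r'(\xi_m)|$ separately, and control the latter via the derivative of the Dirichlet kernel (your quotient-rule sketch is exactly the content of the paper's Lemma~\ref{lemma:upper_bond_partial_L}). The only cosmetic difference is that the paper bounds $|\hat r(\xi_m)|\le\|\hat u(\Xi)\|_2/\md(\Xi)$ via the spectral inequality of Proposition~\ref{prop:conditioning} rather than entrywise through $|D(\eta)|$.

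One small point: the paper, computing $\partial_{\xi_m}\hat r(\Xi)_m$ directly, obtains the cancellation $L_{m,m}\hat u'(\xi_m)-\hat u'(\xi_m)=0$ and so never sees your residual cross term $|\hat u(\xi_m)|\,|\hat u'(\xi_m)|$. Your absorption of that term via $|\hat u(\xi_m)|\le\|\hat u(\Xi)\|_1$ yields $\|\hat u(\Xi)\|_1/|\xi_m|^{\alpha}$, which is missing the factor $1/\md(\Xi)$ in the stated bound; it still exhibits the claimed $|\xi_m|^{-\alpha}$ decay, but the constant is not quite what you assert.
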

The decay assumption appear naturally in the continuous setting, when considering signals $u$ from Sobolev spaces $H^k$ with $k$ derivatives in $L^2$.

\section{Escaping the minimizers}\label{sec:solutions}

In this section we propose some solutions to mitigate the issues raised in Section~\ref{sec:main_results} and we illustrate them numerically.

\subsection{The effect of using a large dataset}\label{sec:effect_large_dataset}

In Theorem~\ref{thm:main1}, we proved existence of many local minimizers in the case $P=1$, which corresponds to a unique signal. Let us now assume that we have access to $P$ signals $u_1,\hdots,u_P$ in $\C^N$.
The analysis carried out to prove Theorem~\ref{thm:main1} can be replicated verbatim.
The only difference being that every occurence of $|\hat u|^2$ must be replaced by $\rho_P \eqdef \frac{1}{P}\sum_{p=1}^P |\hat u_p|^2$.
The function $\rho_P$ can be understood as the average power spectral density of the family $u_1,\hdots, u_P$.
As highlighted in Theorem~\ref{thm:main1}, two important factors that can create spurious minimizers are i) the number $K$ of strict maximizers of $\rho_P$ and ii) the curvature $c$ at these maximizers.

As $P$ increases, we typically expect the density $\rho_P$ to become smoother.
This effect is illustrated for a simple family of shifted and dilated rectangular functions in Fig.~\ref{fig:energy_profile_batch_1D}.
As can be seen, both the number of maxima and the curvature $c$ of $\rho_P$ in Theorem~\ref{thm:main1} decay with $P$.
For $N=128$, we display the average power spectral density for $P$ ranging from $1$ to $10^3$.
Each signal is defined by
\begin{equation}\label{eq:def_signal_square}
u[n]=\int_{n-\f12}^{n+\f12}\ind_{[a,b]}(x)\,\mathrm{d}x,
\end{equation}
where $a$ and $b$ are drawn uniformly in the range $[-N/2+1,N/2-1]$. The discrete signals are then renormalized so that $\|u\|_2=1$.

The same experiment can be reproduced in a more relevant framework from a practical viewpoint. The average power spectral density for 2D knee images of the fastMRI database \cite{zbontar2018fastmri} are represented in Fig.~\ref{fig:energy_profile_batch_2D}.
The image are of size $320\times 320$. The local maximizers are computed and displayed with red dots in Fig.~\ref{fig:energy_profile_batch_2D}.
In that case, increasing the family size $P$ reduces the number of maximizers at a slow rate.
Indeed they slightly increase from $13$k points in the case $P=1$ to $14$k in the case $P=100$ and then start to decrease to $11$k for $P=10000$. However, the curvature $c$ decays much faster.
As a conclusion, we see that \emph{using large families of signals can reduce asymptotically the number and the size of the basins of attraction of some spurious minimizers}.

\begin{figure}[h]
\centering
\includegraphics[width=\linewidth]{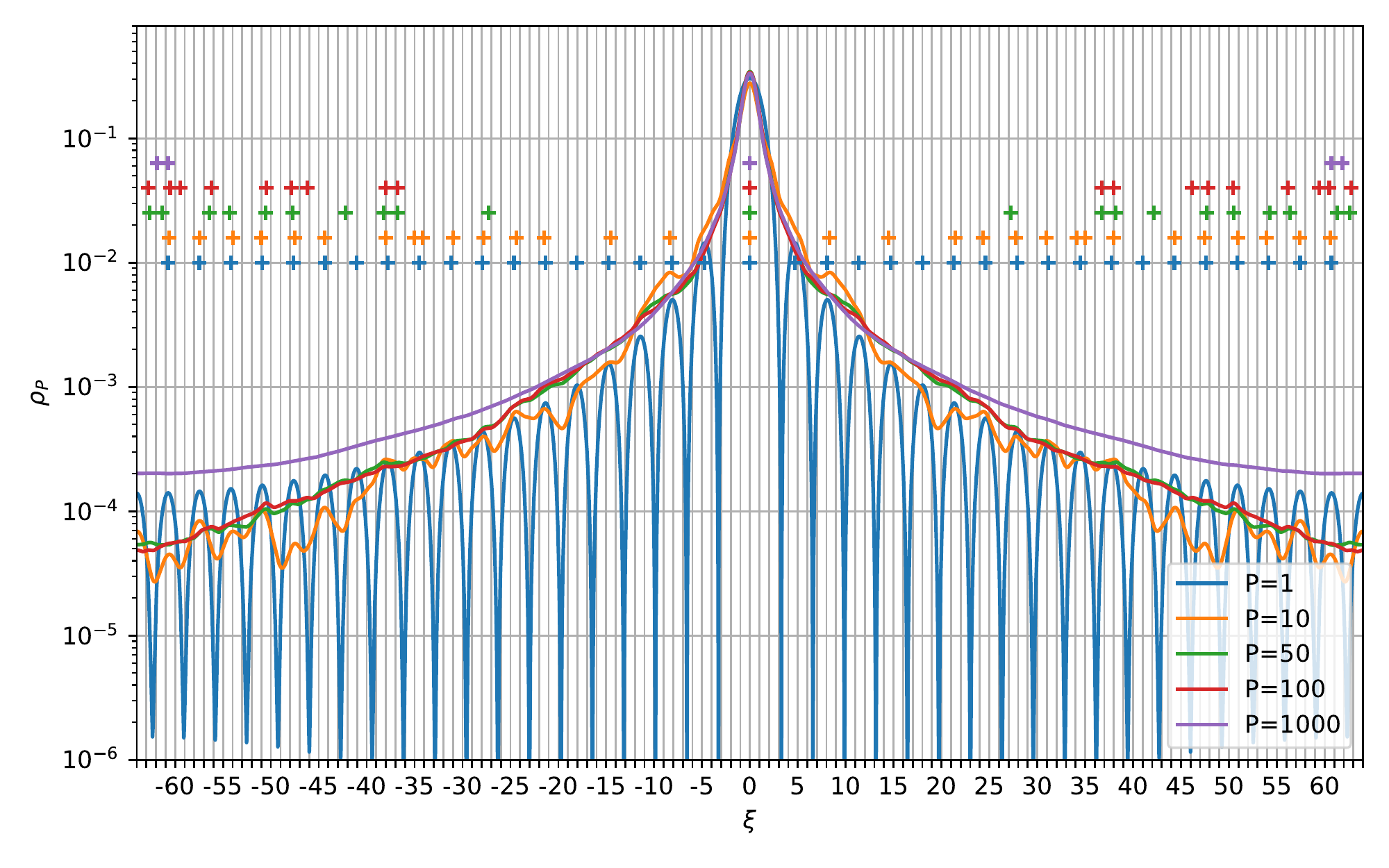}
\caption{Average power spectral density $\rho_P$ for families of rectangular functions with different sizes $P$. The dots represent local maxima of $\rho_P$ for different values of $P$.}
\label{fig:energy_profile_batch_1D}
\end{figure}

\begin{figure*}[h]
\centering
\begin{subfigure}{.325\textwidth}
  \centering
  \includegraphics[width=\linewidth]{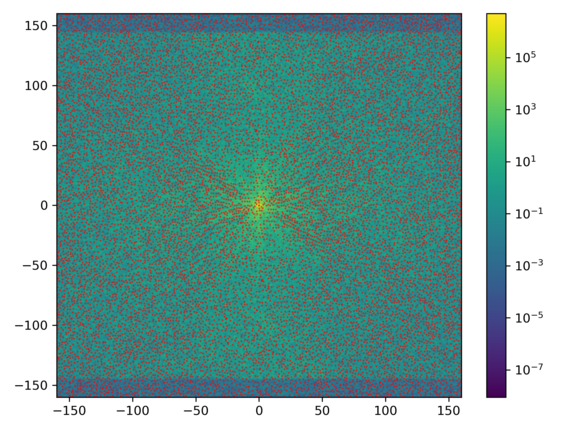}
  \caption{$P=10^0$ -- $13746$ maxima}
\end{subfigure}
\begin{subfigure}{.325\textwidth}
  \centering
  \includegraphics[width=\linewidth]{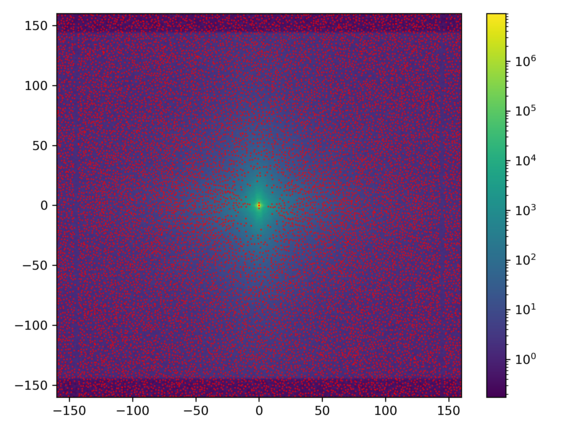}
  \caption{$P=10^2$ -- $14888$ maxima}
\end{subfigure}
\begin{subfigure}{.325\textwidth}
  \centering
  \includegraphics[width=\linewidth]{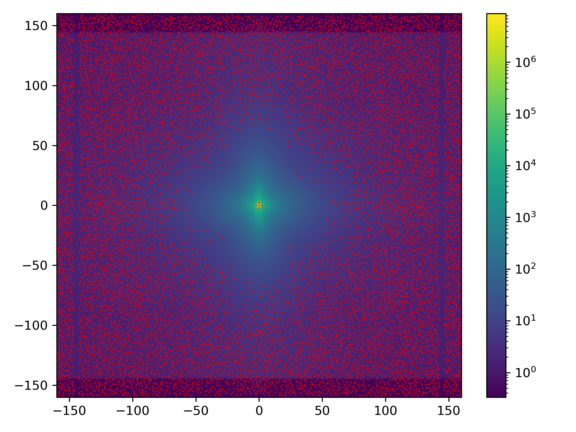}
  \caption{$P=10^4$ -- $11592$ maxima}
\end{subfigure}
\caption{Average power spectral density $\rho_P$ for a subset of images from the knee dataset of fastMRI. The image size is $N=320$ and the red dots represent local maximizers.}
\label{fig:energy_profile_batch_2D}
\end{figure*}

\subsection{Stochastic gradient descent}

When using a large family of signals, the cost function \eqref{eq:function_sum} naturally lends itself to the use of stochastic gradient descents (SGD), see \cite{wang2021b,weiss2019pilot} that address large MRI datasets.
Contrarily to a deterministic gradient descent, which is known to converge to critical points under mild regularity conditions, the stochastic gradient with a fixed step size does not converge. The method is known to end up frolicking in the neighborhood of local critical points \cite{bottou2010large}. The radius of the neighborhood depends on the stochastic gradient variance and on the step-size. Intuitively, \emph{using stochastic gradients algorithms should therefore allow escaping local minimizers}. We will showcase this effect in the forthcoming numerical experiments.

\subsection{Variable metric}\label{sec:variable_metric}

In Section~\ref{sec:flatness_HF}, Theorem~\ref{thm:gradient_flatness} states that the gradient of $J_1$ might vanish in the high frequency domain. Using second order information is a well known remedy to mitigate this effect. 
In this work, we propose a simple method which corresponds to a variable diagonal metric with well-chosen coefficients. 

As shown in Theorem~\ref{thm:gradient_flatness}, the gradient vanishes with a rate depending on the Fourier transform magnitude $|\hat u|$. For a dataset, this decay is somewhat captured by the average power spectral density $\rho_P(\xi)\eqdef\frac{1}{P}\sum_{p=1}^P |\hat u_p(\xi)|^2$. Hence, we propose to compute $\rho_P$ once and for all on a fine grid ($20\times N$ discretization points in our example). The function $\rho_P$ is then linearly interpolated in between the grid points during the gradient descent. At each gradient iteration we replace $\frac{\partial J_1(\Xi)}{\partial\xi_m}$ by 
\begin{equation}\label{eq:variable_metric}
\frac{1}{\rho_P(\xi_m)^\beta}\frac{\partial J_1(\Xi)}{\partial\xi_m},
\end{equation}
where $\beta$ is a constant that has to be set empirically. From numerical experiments $\beta\in[1,2]$ shows good performance. In all the experiments presented hereafter we use $\beta=1$.
We will see later in the numerical experiments, that \emph{this variable metric significantly accelerates the convergence for sampling points located in high frequencies}.

\subsection{Numerical illustrations}

In this section, we aim at illustrating numerically the different results established previously.
We aim at reconstructing 1D signals of size $N=128$ from $M=64$ measurements in the Fourier domain.
We suppose that $P$ rectangular signals generated using \eqref{eq:def_signal_square} are given.
We illustrate our findings with the back-projection reconstructor associated to the cost function $J_1$, but similar results have been obtained with the pseudo-inverse.
As we are working in 1D with small dimensions $N$ and $M$, at each iteration, the whole matrix $A(\Xi)^*$ is evaluated and the gradient $\nabla J_1$ is computed directly from the analytic expression \eqref{eq:expression_grad_J1}.
We first use a fixed step gradient descent algorithm in order to showcase the convergence dynamics of the algorithm.
The initialization of $\Xi$ is a subgrid with a constant spacing of $2$.
The following experiments are conducted:

\paragraph{Effect of the dataset size $P$}
We first vary the number of signals by taking $P=1$ and $P=1000$.
The evolution of $\Xi$ is displayed in Fig.~\ref{fig:batch_size}, respectively top-left and top-center.
The history of the cost function is given in Fig.~\ref{fig:sto}.
For this experiment, we expect that a good sampling scheme consists of low frequencies sampled at the Shannon-Nyquist rate.
In this regard, the sampling scheme obtained in Fig.~\ref{fig:batch_size} for $P=1000$ is more satisfactory than the one obtained for $P=1$. In the case $P=1000$, the displacement of $\Xi$ is more important, suggesting that some local minima have been discarded.

\paragraph{Variable metric}
We then study, for $P=1000$ the effect of a variable metric gradient descent as described in Section~\ref{sec:variable_metric}.
We also compare this approach to an L-BFGS algorithm \cite{goldfarb1970family} with a line search and with a Hessian estimated using the last $8$ gradients.
In Fig.~\ref{fig:batch_size}, the usual gradient algorithm is at the top-center, the variable metric gradient descent is at the bottom-center and the L-BFGS algorithm is at the bottom-left.
The cost function evolution is displayed in Fig.~\ref{fig:sto}.
Using a variable metric results in a huge speed-up of the algorithm.
This is particularly visible for points $\xi$ located at high frequencies, which is another illustration of Theorem~\ref{thm:gradient_flatness}.
For this example, the L-BFGS algorithm converges slightly faster than the variable metric gradient descent in the early iterations. However, its per-iteration cost is much higher since it uses a line search and a non diagonal metric.
Since the L-BFGS algorithm can be seen as a state-of-the-art quasi-Newton method, the proposed empirical metric \eqref{eq:variable_metric} seems remarkably efficient.

\paragraph{Stochastic gradient descent}
Finally in Fig.~\ref{fig:batch_size} right column, we investigate the use of a fixed-step stochastic gradient descent algorithm with a batch size of $1$. In that experiment, a new random signal is generated at every iteration using the model \eqref{eq:def_signal_square} and the stochastic gradient is computed with respect to that signal only.
The trajectory of the vanilla SGD is comparable with the one obtained using a deterministic gradient descent for $P=1000$ in Fig.~\ref{fig:batch_size} top-center.
The variable metric trick significantly improves the convergence speed and more importantly, the final points configuration.
As a conclusion, the variable metric SGD algorithm seems to be able to escape spurious minimizers and to take advantage of the averaging effect of the large dataset without the struggle of computing the gradient over a large dataset.

\paragraph{Comparison of the sampling schemes}
The final sampling schemes are not directly comparable in terms of cost function because the objective function is computed over different datasets.
In Table~\ref{tab:optim_1D_cost}, we therefore report the cost function computed on a specific set of signals.
This set contains the $P=1000$ signals that are used in the numerical illustrations of Fig.~\ref{fig:batch_size} center column.
When tested against a large dataset, the final configuration obtained for $P=1$ seems highly sub-optimal.
This effect is most likely due to a convergence to a local minimizer and also to the fact that the sampling scheme is not adapted to a whole family but only to a single signal.
The remarkable observation that can be made from Table~\ref{tab:optim_1D_cost} is that the optimal configuration obtained with the variable metric SGD performs better on the dataset of $P=1000$ signals than the experiment conducted in Fig.~\ref{fig:batch_size} which is taylored for this dataset.
This shows that the the usual deterministic algorithms are stuck in local minima even with large datasets.
On the contrary, the variable metric SGD algorithm seems effective.

These numerical results highlight the effectiveness of the different tricks suggested in this section: the use of a variable metric to handle high frequencies and a stochastic optimization to avoid local minima.

\begin{figure*}[ht]
\centering
\begin{subfigure}[t]{0.325\textwidth}
	\includegraphics[width=\linewidth]{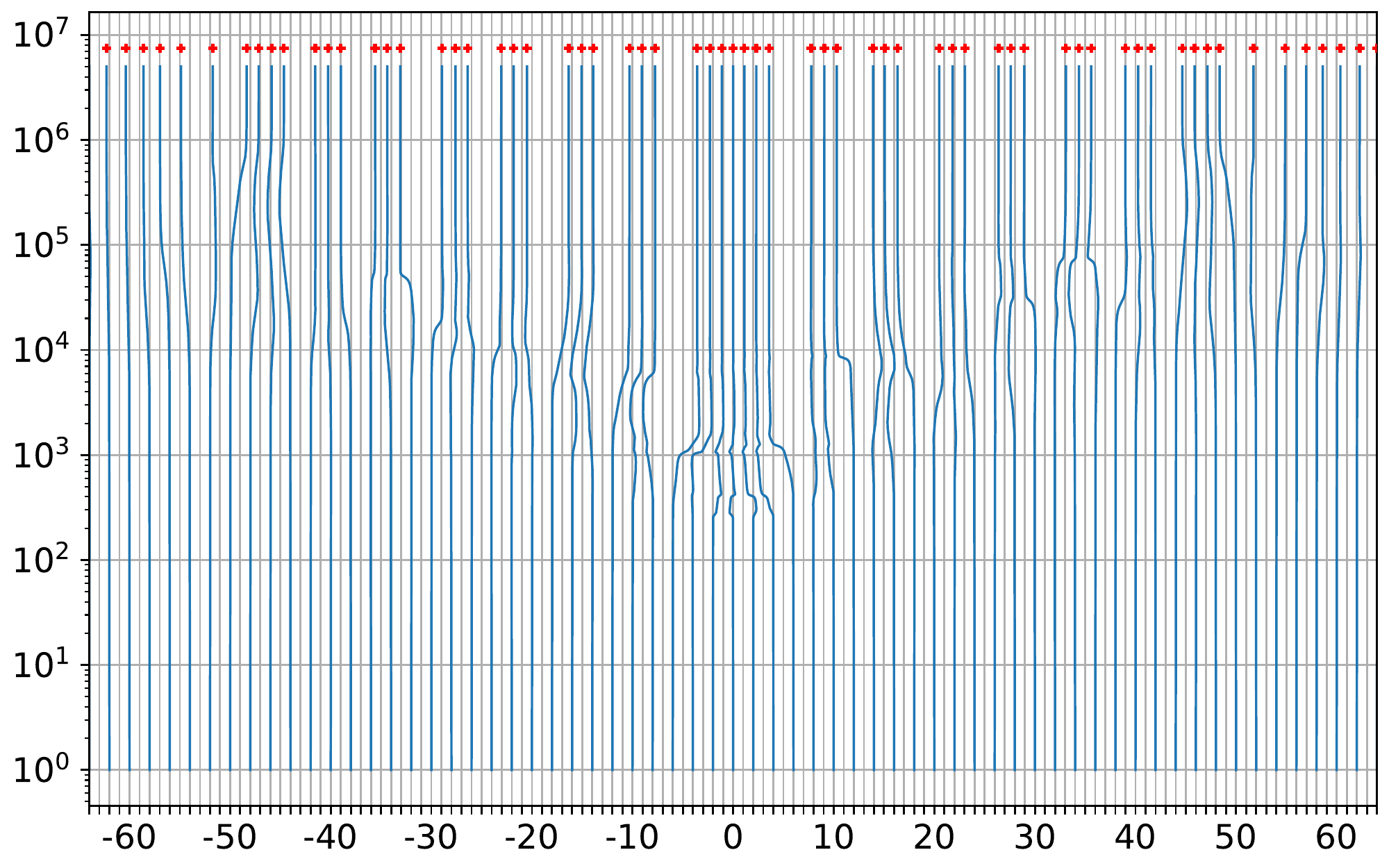}
	\caption{$P=1$ -- vanilla gradient descent}
\end{subfigure}
\begin{subfigure}[t]{0.325\textwidth}
	\includegraphics[width=\linewidth]{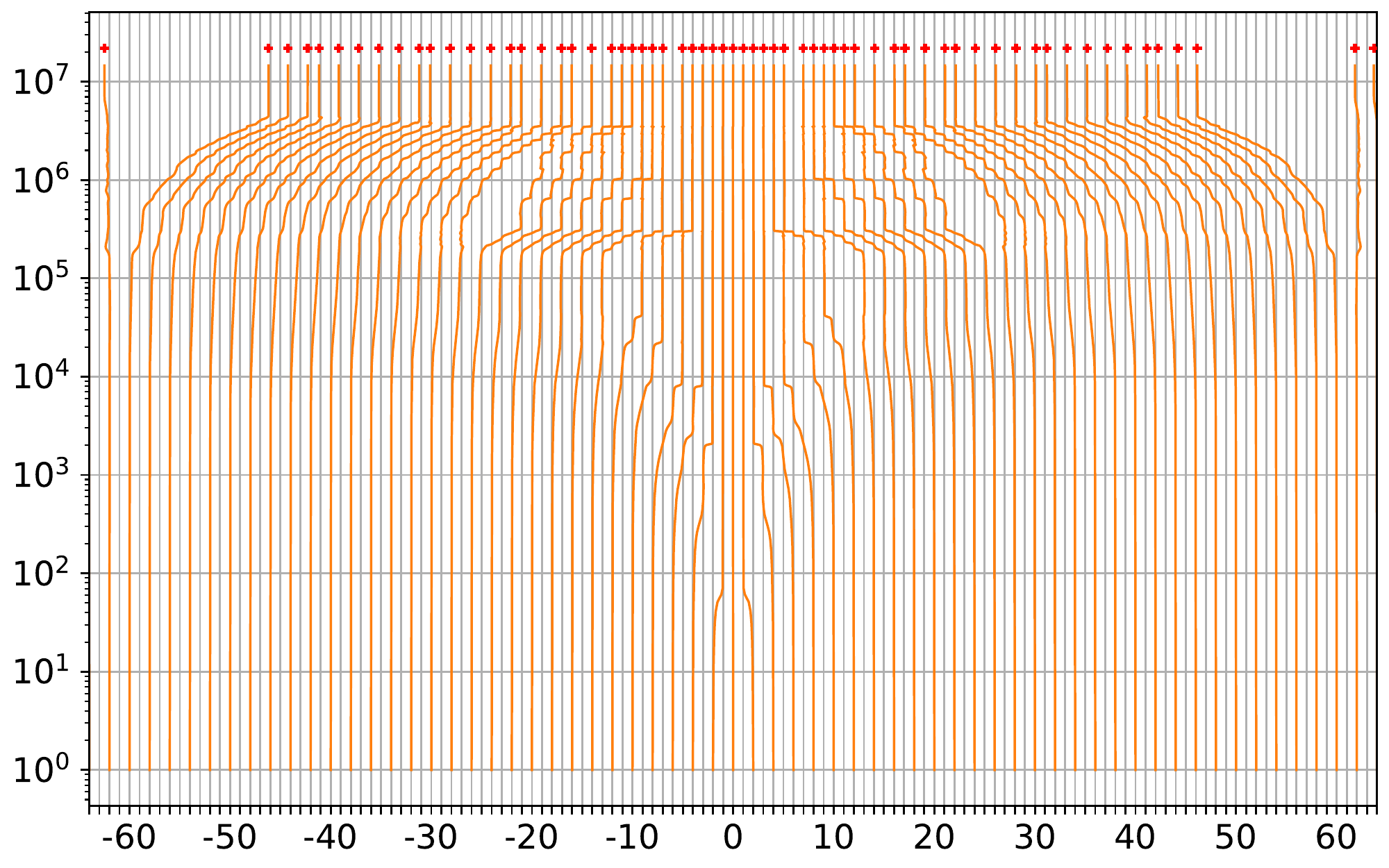}
	\caption{$P=1000$ -- vanilla gradient descent}
\end{subfigure}
\begin{subfigure}[t]{0.325\textwidth}
	\includegraphics[width=\linewidth]{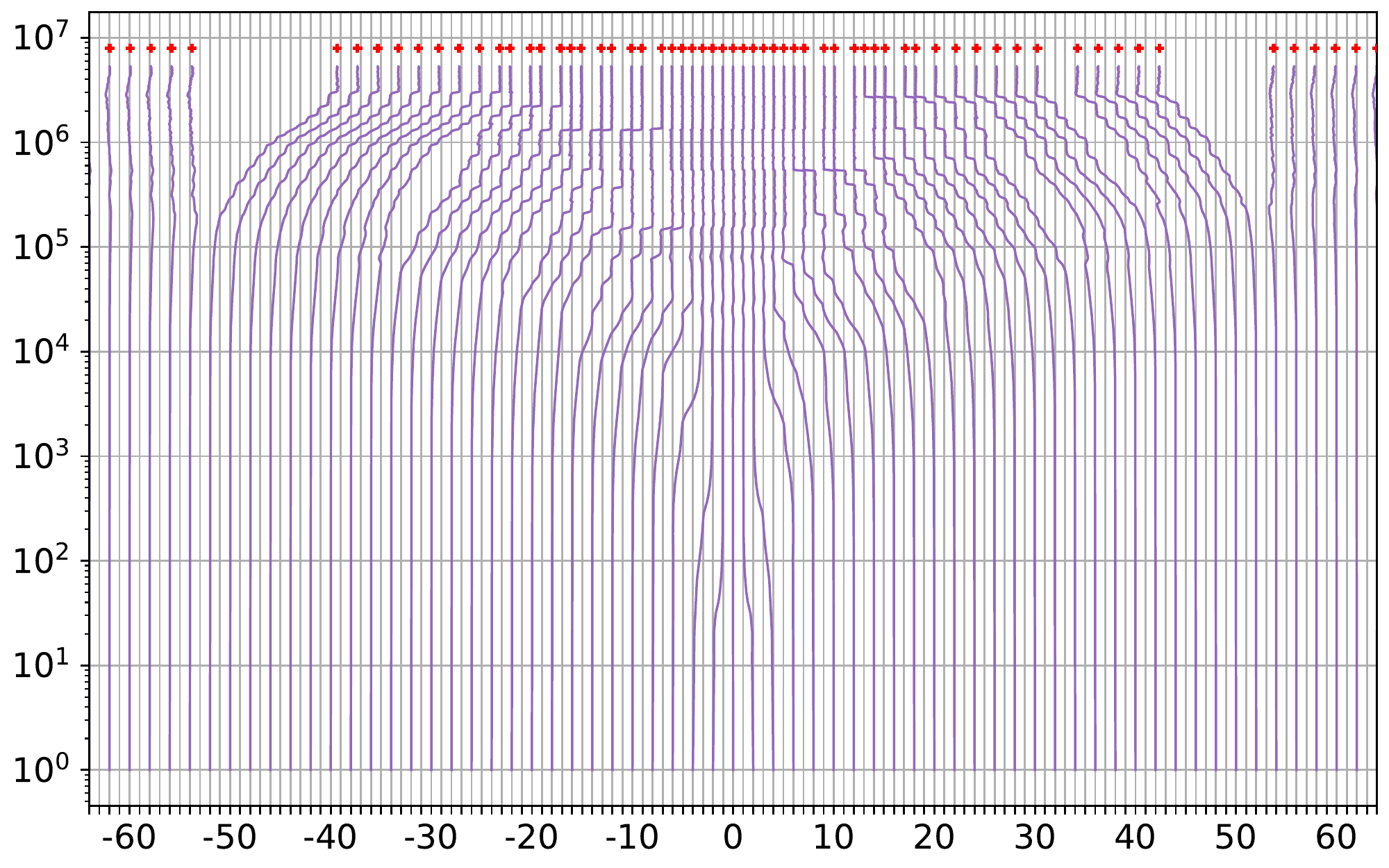}
	\caption{vanilla SGD}
\end{subfigure}
\begin{subfigure}[t]{0.325\textwidth}
	\includegraphics[width=\linewidth]{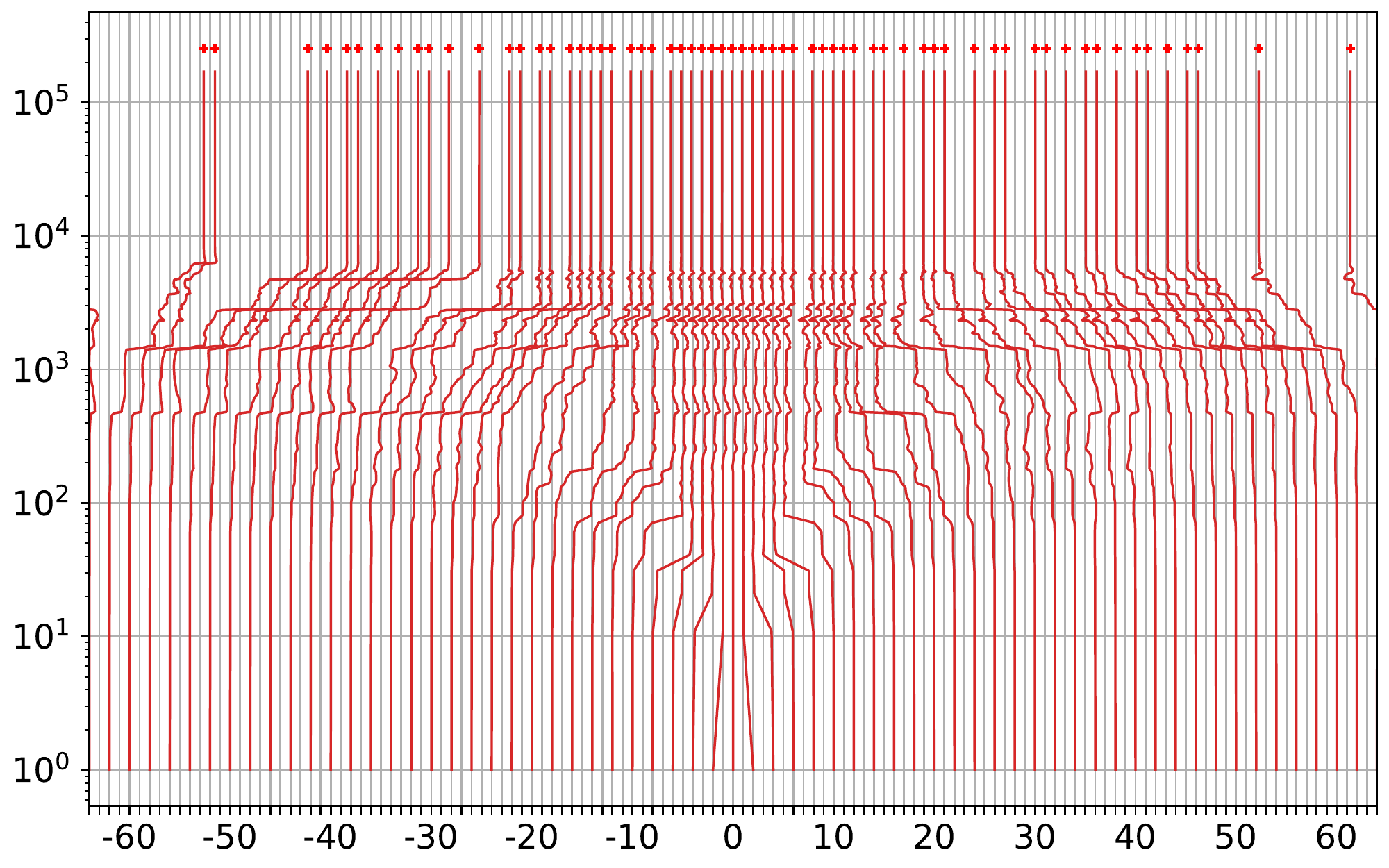}
	\caption{$P=1000$ -- L-BFGS}
\end{subfigure}
\begin{subfigure}[t]{0.325\textwidth}
	\includegraphics[width=\linewidth]{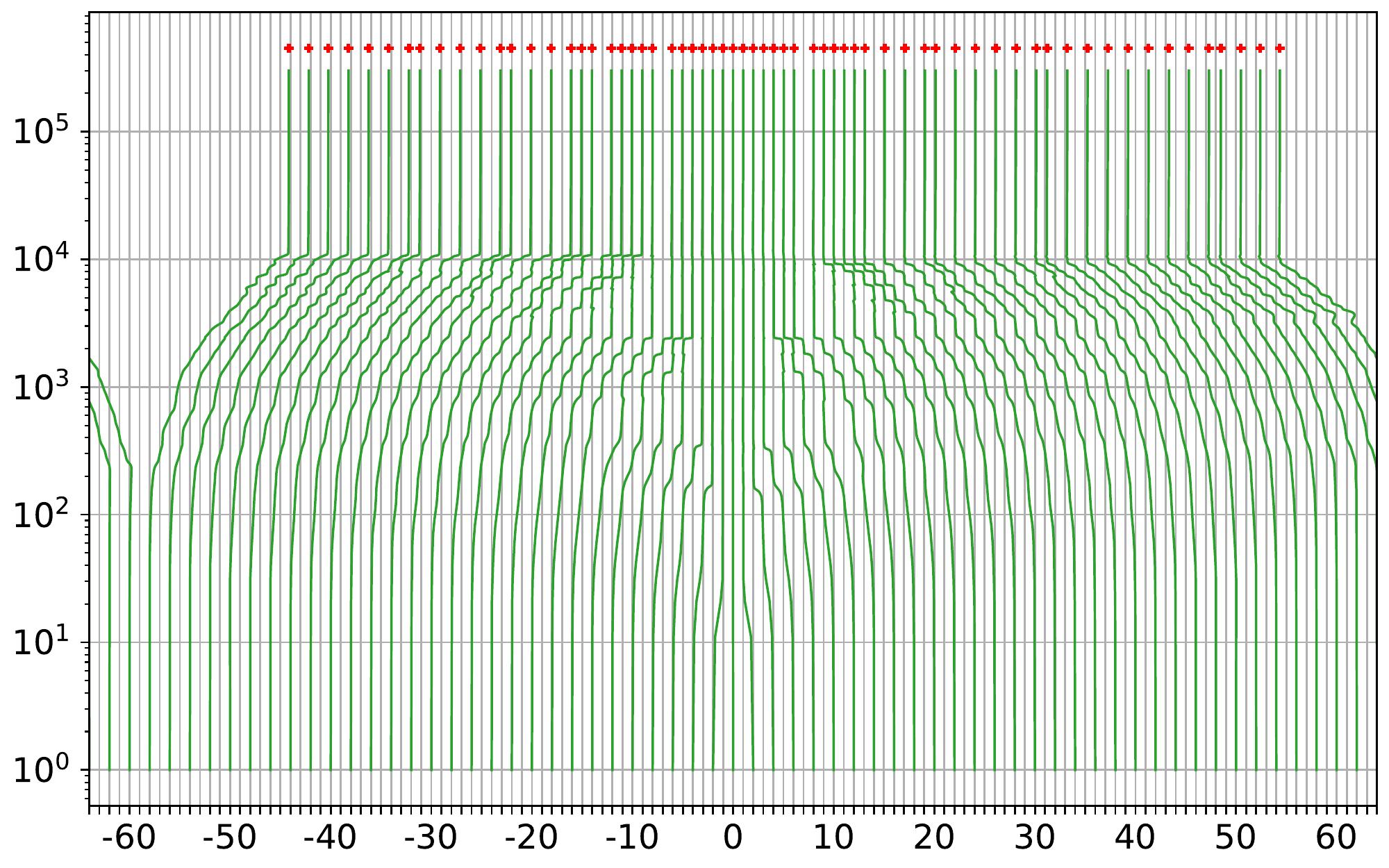}
	\caption{$P=1000$ -- variable metric gradient descent}
\end{subfigure}
\begin{subfigure}[t]{0.325\textwidth}
	\includegraphics[width=\linewidth]{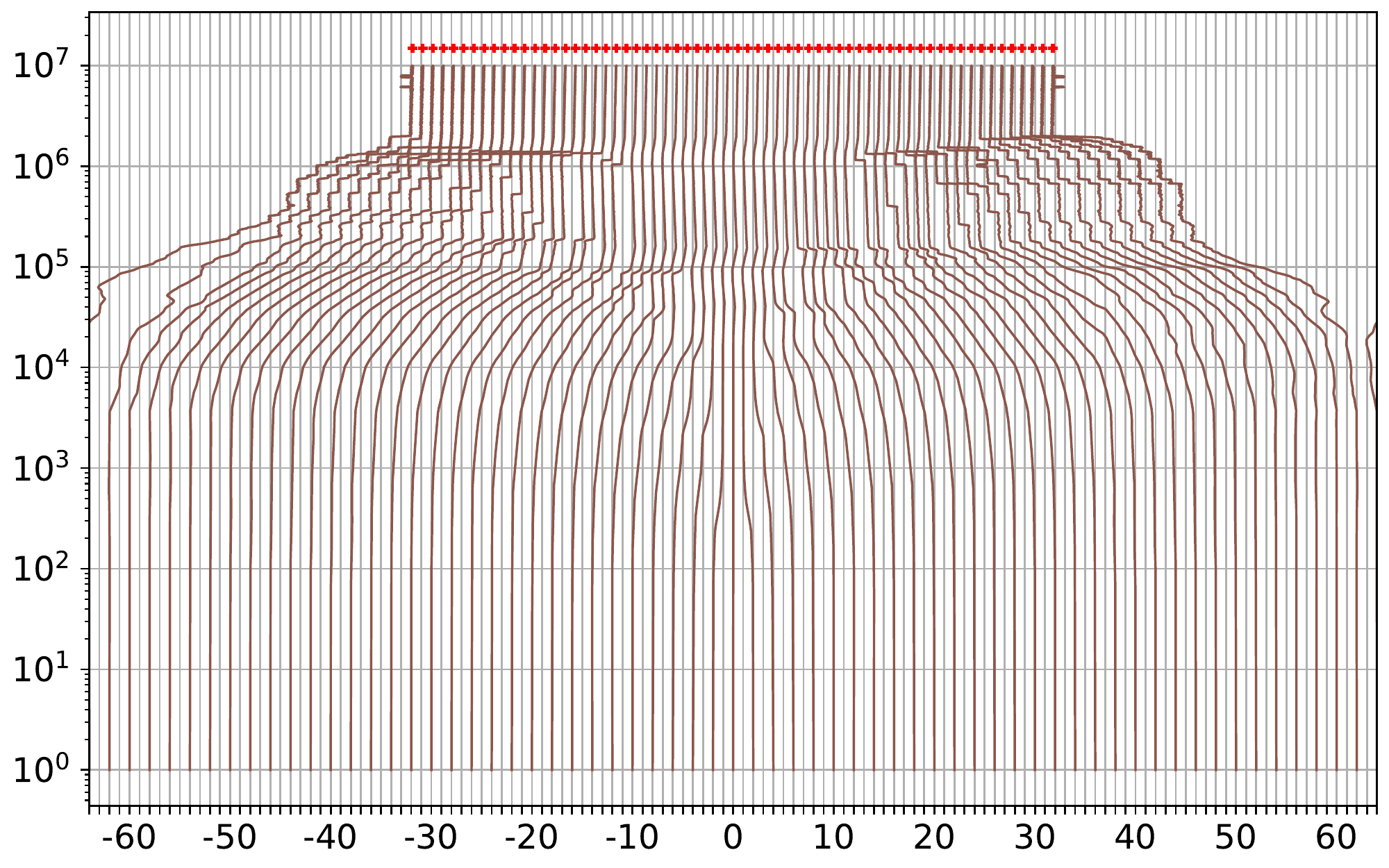}
	\caption{variable metric SGD}
\end{subfigure}
\caption{Trajectories of $\Xi$ the back-projection reconstructor $J_1$ and a fixed-step gradient descent. The iterations are represented on the vertical axis, and the horizontal axis corresponds to $\xi$ and is periodic. The initialization is a uniform subgrid and is seen on the axis $y=0$ of the top and middle figures. Left and center: trajectories of $\Xi$ for different sizes of signals families. The objective function is given in Fig.~\ref{fig:sto}. The right column represents trajectories of $\Xi$ using a stochastic gradient descent with one signal in the batch that is different at each iteration. The trajectories in the stochastic case have been averaged over the last 10000 iterations.}
\label{fig:batch_size}
\end{figure*}

\begin{figure}[ht]
\centering
\includegraphics[width=\linewidth]{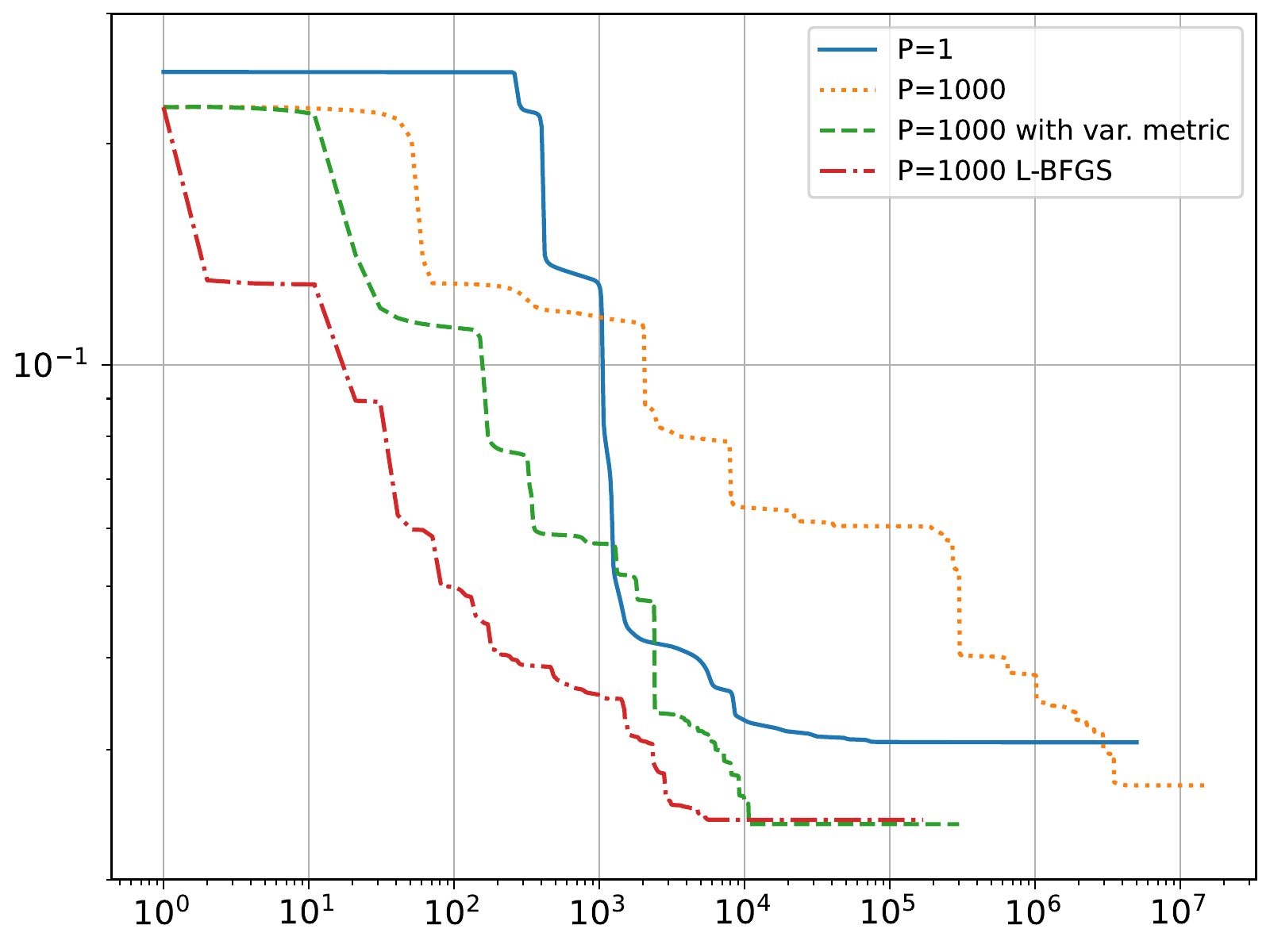}
\caption{Objective function $J_1$ (back-projection) for the different experiments in Fig.~\ref{fig:batch_size} in the deterministic case.}
\label{fig:sto}
\end{figure}

\begin{table*}[h]
\centering
\begin{tabular}{ |p{1.5cm}||p{2cm}|p{2cm}|p{2.5cm}|p{2cm}|p{2cm}|p{2cm}| }
\hline
Test case & $P=1$ & $P=1000$ & $P=1000$ with var. metric & L-BFGS & SGD & SGD with var. metric \\
\hline
Eff. & $9.07\times 10^{-2}$ & $2.68\times 10^{-2}$ & $2.38\times 10^{-2}$ & $2.41\times 10^{-2}$ & $6.63\times 10^{-2}$ & $1.00\times 10^{-2}$ \\
\hline
\end{tabular}
\caption{Effectiveness of the sampling schemes obtained with different strategies on a dataset of $1000$ signals. The table contains the average reconstruction error $J_1$ over the dataset. This dataset is the one used in the case $P=1000$, see Fig.~\ref{fig:batch_size} center column.}
\label{tab:optim_1D_cost}
\end{table*}

\section{Conclusion}

We highlighted two obstacles to the convergence of gradient based algorithms for Fourier sampling schemes optimization.
The first one is a high number of local minimizers and the second one is a vanishing gradient phenomenon for high frequencies.
As far as we know, this is the first theoretical study explaining why optimizing sampling patterns with modern automatic differentiation tools might result in algorithms being stucked at unsatisfactory locations. We also proposed three approaches to mitigate these effects.
First, the number of spurious minimizers, the width and the depth of their basins of attraction can be reduced by considering large databases of signals.
This acts as a regularization by averaging.
Second, the vanishing gradient effect can be attacked with variable metric gradient descents.
Finally, the use of a stochastic gradient instead of a deterministic gradient approach seems to allow escaping the narrow basins in a simplified 1D setting.
These remarks may help explaining why the recent approaches in the literature based on the Adam optimizer manage to slightly improve the sampling pattern efficiency. 
Our work suggests that increasing the database sizes may help further easing the numerical resolution of the sampling pattern optimization by further smoothing the energy profiles.
Many state-of-the-art reconstructors are based on a quadratic data fidelity term and we expect that some of the techniques used in this paper in the linear case can be reused even in a nonlinear setting. This is left for future research.

\section*{Acknowledgment}

The authors acknowledge a support from ANR JCJC Optimization on Measures Spaces, ANR-17-CE23-0013-01.
P.W and F.G. acknowledge the ANR-3IA Artifical and Natural Intelligence Toulouse Institute.
This work was performed using HPC resources from GENCI-IDRIS (Grant 2021-AD011012210R1).

\section{Proofs}

\subsection{Proof of Proposition~\ref{prop:bnd_q_rr}}\label{sec:proof_bnd_q_rr}

Significant progress have been made lately in the control of the extreme eigenvalues of Vandermonde matrices, which play a pivotal role in algebraic number theory \cite{bombieri1984large,moitra2015super,batenkov2020conditioning,aubel2019vandermonde}. The tightest results for well separated schemes was recently obtained in \cite{aubel2019vandermonde}. Rewriting their result in our formalism, we obtain the following inequality.

\begin{proposition}[Conditioning of Vandermonde matrices \cite{aubel2019vandermonde} \label{prop:conditioning}]
    Let $\Xi=(\xi_1,\hdots, \xi_M)$ denote a set of distinct sampling points.
    The following inequalities hold
    \begin{equation}
        \left( 1 - \frac{1}{\md(\Xi)}\right) \Id  \preccurlyeq A(\Xi)^*A(\Xi) \preccurlyeq \left( 1 + \frac{1}{\md(\Xi)}\right) \Id \label{eq:ineq_spectral_L} \\
    \end{equation}
\end{proposition}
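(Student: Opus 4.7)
The plan is to deduce the result from the sharp large-sieve inequality of Selberg, which is the main tool used in \cite{aubel2019vandermonde}; the role of my argument is really to verify the translation into our normalization.

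First, I would observe that the diagonal of $A(\Xi)^*A(\Xi)$ is exactly $\Id$: indeed $\langle a(\xi_m), a(\xi_m)\rangle = \frac{1}{N}\sum_{n\in\Nc} |e^{2\iota\pi\xi_m n/N}|^2 = 1$. Since $A(\Xi)^*A(\Xi)$ is Hermitian, the two-sided spectral bound \eqref{eq:ineq_spectral_L} is equivalent to the operator-norm statement $\|A(\Xi)^*A(\Xi)-\Id\| \leq 1/\md(\Xi)$. So the task reduces to a single upper bound on the quadratic form of the off-diagonal Gram matrix $B = A(\Xi)^*A(\Xi) - \Id$, or equivalently a two-sided bound on $\|A(\Xi)v\|_2^2$ in terms of $\|v\|_2^2$.

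Next, I would rescale by setting $\theta_m = \xi_m/N \in [-1/2,1/2)$, so that the $\theta_m$'s have minimum separation $\md(\Xi)/N$ on the unit torus. Expanding, $\|A(\Xi)v\|_2^2 = \frac{1}{N}\sum_{n\in\Nc} \bigl|\sum_m v_m e^{2\iota\pi \theta_m n}\bigr|^2$, which is the evaluation of a trigonometric polynomial at the integer nodes $n\in\Nc$. Dualizing via Parseval, this is precisely the object controlled by the large-sieve inequality: for any sequence $(c_n)_{n\in\Nc}$ and well-separated frequencies,
\begin{equation*}
\Bigl(N - \tfrac{1}{\md(\Xi)/N}\Bigr)\sum_n |c_n|^2 \leq \sum_m \Bigl|\sum_n c_n e^{2\iota\pi\theta_m n}\Bigr|^2 \leq \Bigl(N + \tfrac{1}{\md(\Xi)/N}\Bigr)\sum_n |c_n|^2.
\end{equation*}
The upper bound is Selberg's classical sharp form (via a Beurling-Selberg majorant of the indicator of an interval), and the lower bound follows from the analogous minorant construction. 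After undoing the rescaling (accounting for the $1/\sqrt N$ in the definition of $a(\xi)$ and the unit-length vs.\ $N$-length torus conventions), the constants $N \pm N/\md(\Xi)$ become $1 \pm 1/\md(\Xi)$, giving exactly \eqref{eq:ineq_spectral_L}.

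The main obstacle is not the large-sieve inequality itself, which is cited from the literature, but the bookkeeping: one must verify that (i) the centering $n\in\llbracket -N/2, N/2-1\rrbracket$ only contributes a harmless phase (so the trigonometric polynomial has the effective length $N$ required by Selberg's inequality), and (ii) both the majorant and minorant forms apply, since the two-sided result is not a consequence of the majorant alone. A naive attempt via Gershgorin's theorem and the Dirichlet kernel bound $|B_{m,m'}| \lesssim 1/|\xi_m - \xi_{m'}|$ would yield only a $\log M / \md(\Xi)$ bound, which is why the sharper large-sieve route is required.
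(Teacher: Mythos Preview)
Your proposal is correct and matches the paper's approach: the paper's proof is the single sentence ``Relation \eqref{eq:ineq_spectral_L} is a direct consequence of \cite[eq.~(31)]{aubel2019vandermonde} up to renormalizations,'' and what you have written is precisely an unpacking of that citation (Selberg's sharp large-sieve via Beurling--Selberg extremal functions) together with the renormalization bookkeeping. One minor remark: your ``dualizing via Parseval'' step is more cleanly phrased as the observation that $A(\Xi)^*A(\Xi)$ and $A(\Xi)A(\Xi)^*$ share their nonzero spectrum, so the large-sieve bound on $\|A(\Xi)^*c\|_2^2$ (which is the native direction of the inequality) immediately transfers to $A(\Xi)^*A(\Xi)$.
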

\begin{proof}
    Relation \eqref{eq:ineq_spectral_L} is a direct consequence of \cite[eq. (31)]{aubel2019vandermonde} up to renormalizations.
\end{proof}

\begin{proof}
    For $i=1$, recall that $Q_1=\Id$ and $R_1^*R_1-\Id=A^*A$.
    Then \eqref{eq:bnd_Q} is trivial and \eqref{eq:bnd_RR} follows from Proposition~\ref{prop:conditioning}.
    
    Let $\tau_m$ denote the eigenvalues of $A(\Xi)^*A(\Xi)$.
    By Proposition~\ref{prop:conditioning}, $|\tau_m-1|\leq\epsilon<1$.

    For $i=2$, $R_2^*R_2=Q_2^*A^*AQ_2 = \left(A^*A\right)^+$, $Q_2=\left(A^*A\right)^+$. With $\epsilon<1$, $A(\Xi)^*A(\Xi)$ is invertible and we have
    \begin{equation}
        \frac{1}{1+\epsilon}  \Id  \preccurlyeq \left(A(\Xi)^*A(\Xi)\right)^{-1} \preccurlyeq \frac{1}{ 1-\epsilon} \Id.
    \end{equation}
    And we finally get
    \begin{equation}
        \frac{-\epsilon}{1+\epsilon}  \Id  \preccurlyeq \left(A(\Xi)^*A(\Xi)\right)^{-1}-\Id \preccurlyeq \frac{\epsilon}{ 1-\epsilon} \Id.
    \end{equation}

    For $i=3$, $Q_3=(1+\lambda)\left(A^*A+\lambda\Id\right)^{-1}$, so that
    \begin{equation*}
        \frac{1+\lambda}{1+\lambda+\epsilon}\Id \preccurlyeq Q_3 \preccurlyeq \frac{1+\lambda}{1+\lambda-\epsilon}\Id.
    \end{equation*}
    This gives
    \begin{equation*}
    \frac {-\epsilon}{1+\lambda+\epsilon}\Id \quad \preccurlyeq Q_3-\Id \preccurlyeq \frac \epsilon{1+\lambda-\epsilon}\Id
    \end{equation*}
    and using that $\epsilon>0$ and $\lambda\geq 0$ allows to conclude.

    In order to prove \eqref{eq:bnd_RR}, Proposition~\ref{prop:conditioning} yields $1-\epsilon\leq \tau_m \leq 1+\epsilon$. In addition, 
    \begin{equation*}
    R_3^*R_3 = (1+\lambda)^2 (A^*A+\lambda \Id)^{-1}A^*A(A^*A+\lambda \Id)^{-1}
    \end{equation*}
    can be diagonalized and its eigenvalues are therefore of the form
    $$
    (1+\lambda)^2 \frac{\tau_m}{(\tau_m+\lambda)^2}.
    $$ 
    By taking the upper-bound on the numerator and the lower-bound on the denominator, we obtain the following bound:
    \begin{equation*}
             R_3^*R_3 \preccurlyeq (1+\lambda)^2 \frac{1+\epsilon}{(1-\epsilon + \lambda)^2}\Id.
    \end{equation*}
    We can continue as follows:
    \begin{align*}
        &(1+\lambda)^2 \frac{1+\epsilon}{(1-\epsilon + \lambda)^2} \\
        &= (1+\lambda)^2 \frac{1+\epsilon}{(1+\lambda)^2 (1-\epsilon/(1+ \lambda))^2} \\
        & \leq \frac{1+\epsilon}{(1-\epsilon)^2}.
    \end{align*}
    By a similar reasoning with respect to the smallest eigenvalue of $R_3^*R_3$, we get:
    \begin{equation*}
        \frac{1-\epsilon}{(1+\epsilon)^2} \Id \preccurlyeq R_3^*R_3 \preccurlyeq \frac{1+\epsilon}{(1-\epsilon)^2} \Id. 
    \end{equation*}
    Substracting the identity on both sides and using the fact that $\epsilon^2<\epsilon$ since $\epsilon<1$ yields
    \begin{equation*}
        -\frac{4\epsilon}{(1-\epsilon)^2}\Id \preccurlyeq R_3^*R_3 - \Id \preccurlyeq \frac{4\epsilon}{(1-\epsilon)^2} \Id.
    \end{equation*}
\end{proof}

\subsection{Proof of Theorem~\ref{thm:main1}}\label{sec:proof_thm_main1_gen}

Under the hypotheses of Theorem~\ref{thm:main1}, first notice that any set $\Xi \in Z^M$ is a local maximizer of $\Xi\mapsto \|\hat u(\Xi)\|_2^2$.
Indeed any perturbation of the individual sampling locations $\xi_m$ results in a decay of the captured energy.

There are $\binom{K}{M}$ possible sampling configurations when all the points belong to $Z$.
Let $\bar \Xi=\{\bar \xi_1, \hdots, \bar \xi_M \}$ denote one of them.
The idea of the proof is to show that there is a local minimizer of $J$ in the following neighborhood $B=[\bar \xi_1-r,\bar \xi_1+r]\times \hdots \times [\bar \xi_M-r, \bar \xi_M+r]$.
A sufficient condition for the set $B$ to contain a local minimizer of $J$ is that $J(\bar \Xi) < J(\Xi)$ for all $\Xi \in \partial B$ (the boundary of $B$) since $J$ is continuous.
Throughout this proof, we use $\epsilon=\frac 1{\delta-2r}$ since we always have for all $\Xi$ under consideration, $\md(\Xi)\geq \delta-2r$.

Using the bounds of Proposition~\ref{prop:bnd_q_rr}, we obtain
\begin{equation}
    \left| J(\Xi)-\tilde J(\Xi) \right| \leq \left(\frac{b}2+a\right)\|\hat u(\Xi)\|_2^2+\frac b 2 \sigma^2M. \label{eq:second_order_bnd}
\end{equation}

For all $\Xi\in \partial B$, at least one index $m$ must verify $\bar\xi_m-\xi_m=r$ and we have by strict concavity of $|\hat u|$ around $\bar \xi_m$
\begin{align}
    \|\hat u(\bar\Xi)\|_2^2 - \|\hat u(\Xi)\|_2^2 &\geq \frac{cr^2}2 \label{eq:equation_cr}\\
    \|\hat u(\bar\Xi)\|_2^2 + \|\hat u(\Xi)\|_2^2 &\leq 2\|\hat u(\bar\Xi)\|_2^2. \label{eq:equation_somme}
\end{align}
Hence for $\Xi \in \partial B$, using \eqref{eq:equation_cr} yields
\begin{equation}\label{eq:36}
\tilde J(\Xi)  - \tilde J(\bar \Xi)\geq \frac{cr^2}{2}
\end{equation}

Combining the previous inequalities yields
\begin{align*}
&J(\Xi)-J(\bar \Xi) \\ 
& = J(\Xi)- \tilde J(\Xi) + \tilde J(\Xi) - \tilde J(\bar \Xi) + \tilde J(\bar \Xi) - J(\bar \Xi) \\
& \stackrel{\eqref{eq:36}}{\geq} \frac{cr^2}{2} + J(\Xi)- \tilde J(\Xi) + \tilde J(\bar \Xi) - J(\bar \Xi) \\
& \stackrel{\eqref{eq:second_order_bnd}}{\geq} \frac{cr^2}{2} - \left[\left(\frac{b}{2}+a\right)\left(\|\hat u(\Xi)\|_2^2+\|\hat u(\bar \Xi)\|_2^2\right)+b M\sigma^2 \right] \\
& \stackrel{\eqref{eq:equation_somme}}{\geq} \frac{cr^2}{2} - \left[\left(b+2a\right)\|\hat u(\bar \Xi)\|_2^2+b M\sigma^2 \right]
\end{align*}

Therefore, the condition
\begin{align}
  \frac{cr^2}{2} > \left(b+2a\right)\|\hat u(\bar \Xi)\|_2^2+b M\sigma^2 
\end{align}
suffices to conclude on the existence of a maximizer of $J$ in the interior of $B$.
The multiplicative factor $M\,!$ is related to the fact that for a given maximizer, all the possible permutations of indices give rise to different maximizers.

\subsection{Proof of Proposition~\ref{prop:expression_gradient}}\label{sec:proof_expression_gradient}

\begin{proof}
Let us consider a point configuration $\Xi\in \R^{M}$ and a perturbation $\epsilon\in \R^M$.
Given a vector of measurements $\hat u(\Xi)\in\C^M$, we let $\nabla \hat u(\Xi)=\begin{pmatrix}\hat u'(\xi_1)\\ \vdots\\ \hat u'(\xi_M)\end{pmatrix}$ denote the vector of derivatives at the sampling locations. 
Elementary calculus leads to the following identities for every $\epsilon$ direction of variation:
\begin{align*}
(\Jac_{A}(\Xi)\epsilon)^* &= \Jac_{A^*}(\Xi)\epsilon \\
\nabla \hat u(\Xi)\odot\epsilon &= \Jac_{A^*}(\Xi)\epsilon\cdot u.
\end{align*}

Then, we apply standard calculus of variations:
\begin{align*}
J_1(\Xi+\epsilon) =& J_1(\Xi) + \Re\langle \Jac_A(\Xi)\epsilon \cdot\hat u(\Xi), r(\Xi)\rangle \\
&\hspace{-0.5cm}+ \Re\langle A(\Xi)\Jac_{A^*}(\Xi)\epsilon\cdot u, r(\Xi)\rangle + o(\|\epsilon\|_2^2) \\
=& J_1(\Xi) + \Re\langle \hat u(\Xi), \left(\Jac_A(\Xi)\epsilon\right)^*r(\Xi)\rangle \\
&+ \Re\langle \nabla \hat u(\Xi)\odot\epsilon, \hat r(\Xi) \rangle + o(\|\epsilon\|_2^2) \\
=& J_1(\Xi) + \Re\langle \hat u(\Xi), \nabla \hat r(\Xi)\odot\epsilon\rangle \\
&+ \Re\langle \epsilon, \overline{\nabla \hat u(\Xi)}\odot\hat r(\Xi) \rangle + o(\|\epsilon\|_2^2) \\
=& J_1(\Xi) + \Re\langle \overline{\nabla \hat r(\Xi)}\odot\hat u(\Xi), \epsilon\rangle \\
&+ \Re\langle \epsilon, \overline{\nabla \hat u(\Xi)}\odot\hat r(\Xi) \rangle + o(\|\epsilon\|_2^2).
\end{align*}
Hence, by identification
\begin{align*}
\nabla J_1(\Xi) &= \Re\left(\overline{\nabla \hat r(\Xi)}\odot \hat u(\Xi) + \nabla \hat u(\Xi)\odot \overline{\hat r(\Xi)}\right) \\
&= \Re\left(\nabla\left(\hat u(\Xi)\odot\overline{\hat r(\Xi)}\right)\right).
\end{align*}
\end{proof}

\subsection{Proof of Theorem~\ref{thm:gradient_flatness}}\label{sec:proof_thm_flatness}

In order to simplify the notation, let $L(\Xi)\eqdef A(\Xi)^*A(\Xi)$.
By Proposition~\ref{prop:expression_gradient}, we have 
\begin{equation*}
\left|\frac{\partial J_1(\Xi)}{\partial\xi_m}\right| \leq |\hat u'(\xi_m)|\cdot |\hat r(\xi_m)| + |\hat u(\xi_m)|\cdot |\hat r'(\xi_m)|.
\end{equation*}
By definition, we have $\hat r(\Xi)=(L(\Xi)-\Id)\hat u (\Xi)$, hence
\begin{equation}
|\hat r(\xi_m)|\leq \|\hat r(\Xi)\|_2 \leq \frac{\|\hat u(\Xi)\|_2}{\md(\Xi)},
\end{equation}
where we used Proposition~\ref{prop:conditioning} to obtain the last inequality. Now, we also wish to control $|\hat r'(\xi_m)|$.
To this end, first notice that
\begin{align*}
\hat r'(\xi_{m}) =& \sum_{m'=1}^M \left(\frac{\partial L(\Xi)_{m,m'}}{\partial\xi_m}\hat u(\xi_{m'}) \right. \\
& \left. \vphantom{ \frac{\partial L(\Xi)_{m,m'}}{\partial\xi_m} } + L(\Xi)_{m,m'}\hat u'(\xi_{m'})\ind_{m=m'}\right) - \hat u'(\xi_m) \\
=& \sum_{m'=1}^M \frac{\partial L(\Xi)_{m,m'}}{\partial \xi_{m}} \hat u(\xi_{m'}).
\end{align*}

We start with an analytical expression of the matrix $L(\Xi)$.
\begin{proposition}[The expression of $A^*A$]
Let $L(\Xi)\eqdef A(\Xi)^*A(\Xi)$. We have
\begin{equation}
[L(\Xi)]_{m,m'} = \begin{dcases}
1 & \textrm{ if } m=m', \\
\substack{ \frac{1}{N}\exp\left(\frac{\iota\pi(\xi_m-\xi_{m'})}{N}\right) \\ \times\frac{\sin(\pi (\xi_m-\xi_{m'}))}{\sin\left(\frac{\pi (\xi_m-\xi_{m'})}{N} \right)} } & \textrm{otherwise}.
\end{dcases}
\end{equation}
\end{proposition}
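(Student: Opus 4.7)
The proof is a direct computation. First I would expand the $(m,m')$ entry using the definitions from the notation section:
\begin{equation*}
[L(\Xi)]_{m,m'} = \sum_{n\in\Nc} \overline{A(\Xi)_{n,m}}\, A(\Xi)_{n,m'} = \frac{1}{N}\sum_{n=-N/2}^{N/2-1} e^{2\iota\pi(\xi_{m'}-\xi_m)n/N}.
\end{equation*}
Setting $\theta = \xi_{m'}-\xi_m$ reduces the claim to evaluating the truncated geometric series
$S(\theta) = \sum_{n=-N/2}^{N/2-1} q^n$ with $q = e^{2\iota\pi\theta/N}$.

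The case $m=m'$ is immediate: $\theta=0$ forces each summand to equal $1$, giving $S=N$ and $[L]_{m,m'}=1$. For $m\neq m'$, I would factor out $q^{-N/2}$ and apply the usual closed form
\begin{equation*}
S(\theta) = q^{-N/2}\,\frac{q^N-1}{q-1}.
\end{equation*}
The numerator and denominator are then each written in the form $2\iota e^{\iota\alpha}\sin(\alpha)$: precisely, $q^N-1 = 2\iota e^{\iota\pi\theta}\sin(\pi\theta)$ and $q-1 = 2\iota e^{\iota\pi\theta/N}\sin(\pi\theta/N)$. Combining with $q^{-N/2}=e^{-\iota\pi\theta}$ and simplifying yields
\begin{equation*}
S(\theta) = e^{-\iota\pi\theta/N}\,\frac{\sin(\pi\theta)}{\sin(\pi\theta/N)}.
\end{equation*}

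Finally, using the oddness of $\sin$ to rewrite $\sin(\pi\theta)/\sin(\pi\theta/N) = \sin(\pi(\xi_m-\xi_{m'}))/\sin(\pi(\xi_m-\xi_{m'})/N)$ (the two sign flips cancel), and recognizing the prefactor as $\exp(\iota\pi(\xi_m-\xi_{m'})/N)$, I recover the stated formula after dividing by $N$. There is no real obstacle here; the only point requiring care is bookkeeping of signs when passing between $\xi_{m'}-\xi_m$ and $\xi_m-\xi_{m'}$, and verifying that the $m=m'$ case corresponds to the removable singularity of the generic formula (since $\sin(\pi\theta)/\sin(\pi\theta/N) \to N$ as $\theta\to 0$, recovering $1$ consistently).
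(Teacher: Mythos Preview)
Your proof is correct and follows essentially the same approach as the paper: both expand the entry as a geometric sum in $q=e^{2\iota\pi(\xi_{m'}-\xi_m)/N}$, apply the closed-form formula, and rewrite numerator and denominator in the form $e^{\iota\alpha}\sin\alpha$ to extract the Dirichlet-kernel expression. The only cosmetic difference is that you make the final sign flip from $\xi_{m'}-\xi_m$ to $\xi_m-\xi_{m'}$ explicit, whereas the paper leaves it implicit.
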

\begin{proof}
We have:
\begin{align*}
\left[L(\Xi)\right]_{m,m'} =& \frac 1 N\sum_n e^{2\iota\frac\pi N\langle\xi_{m'}-\xi_{m}, n\rangle} \\
=& \frac 1 N e^{-\iota\pi(\xi_{m'}-\xi_{m})} \frac{1-e^{2\iota\pi(\xi_{m'}-\xi_{m})}}{1-e^{2\iota\frac\pi N(\xi_{m'}-\xi_{m})}} \\
=& \frac 1 N e^{-\iota\pi(\xi_{m'}-\xi_{m})} \frac{e^{\iota\pi(\xi_{m'}-\xi_{m})}}{e^{\iota\frac\pi N(\xi_{m'}-\xi_{m})}} \\
&\times \frac{e^{-\iota\pi(\xi_{m'}-\xi_{m})}-e^{\iota\pi(\xi_{m'}-\xi_{m})}}{e^{-\iota\frac\pi N(\xi_{m'}-\xi_{m})}-e^{\iota\frac\pi N(\xi_{m'}-\xi_{m})}} \\
=& \frac 1 N e^{-\iota\frac\pi N(\xi_{m'}-\xi_{m})} \frac{\sin(\pi(\xi_{m'}-\xi_{m}))}{\sin(\frac\pi N(\xi_{m'}-\xi_{m}))}.
\end{align*}
\end{proof}

Now, we will use the following lemma. 
\begin{lemma}\label{lemma:upper_bond_partial_L}
The following bound holds:
\begin{equation*}
\left|\frac{\partial L(\Xi)_{m,m'}}{\partial\xi_{m}}\right| \leq \frac\pi{N}+\frac{4}{\dist(\xi_{m'},\xi_m)}\leq \frac\pi{N}+\frac{4}{\md(\Xi)}.
\end{equation*}
\end{lemma}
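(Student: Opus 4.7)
The plan is to view the matrix entry as a function of the single scalar variable $t = \xi_{m'} - \xi_m$, so that $\partial_{\xi_m}$ reduces to $-d/dt$. Using the closed form established in the preceding proposition, I would write
\begin{equation*}
L(\Xi)_{m,m'} = e^{-\iota\pi t/N}\, g(t), \qquad g(t) \eqdef \frac{\sin(\pi t)}{N\sin(\pi t/N)}.
\end{equation*}
The product rule then gives $\left|\partial_{\xi_m} L_{m,m'}\right| \leq \frac{\pi}{N}|g(t)| + |g'(t)|$, and the two summands on the right-hand side will produce, respectively, the $\pi/N$ term and the $4/\dist$ term of the target bound.

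The first task is to establish $|g(t)| \leq 1$. The clean argument is to notice that $g(t)$ equals, up to a unimodular phase, the normalized geometric sum $\frac{1}{N}\sum_{k=0}^{N-1} e^{2\iota\pi tk/N}$, which is an average of $N$ complex numbers of modulus one and therefore has modulus at most $1$. This is essentially the same computation already used in the previous proposition to derive the closed form. Any crude substitute based on $|\sin(\pi t)|\leq 1$ alone would produce a $1/\dist$-type bound and pollute the clean $\pi/N$ contribution, so this unimodularity observation is the single nontrivial ingredient of the proof.

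For the second task, I would differentiate $g$ directly:
\begin{equation*}
g'(t) = \frac{\pi\cos(\pi t)}{N\sin(\pi t/N)} - \frac{\pi\cos(\pi t/N)}{N\sin(\pi t/N)}\, g(t).
\end{equation*}
Bounding the cosines by $1$ and reusing $|g(t)|\leq 1$ yields $|g'(t)|\leq \frac{2\pi}{N|\sin(\pi t/N)|}$. Jordan's inequality in the form $|\sin(\pi s/N)| \geq 2\dist(s, N\Z)/N$ then gives $|g'(t)| \leq \pi/\dist(t, N\Z)$, which is at most $4/\dist(\xi_{m'}, \xi_m)$ since $\dist(\xi_{m'}, \xi_m)$ is by definition $\inf_k|t - kN| = \dist(t, N\Z)$ and $\pi < 4$. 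Combining the two estimates yields the first inequality of the lemma, and $\md(\Xi)\leq \dist(\xi_{m'}, \xi_m)$ gives the second. The only real obstacle is the $|g(t)|\leq 1$ bound; once it is in hand, everything else is a routine manipulation of trigonometric inequalities.
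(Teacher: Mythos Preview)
Your proposal is correct and follows essentially the same route as the paper's proof: both split the derivative via the product rule into a phase contribution bounded by $\frac{\pi}{N}|g(t)|$ and a contribution $|g'(t)|$, both invoke the Dirichlet-kernel bound $|g(t)|\leq 1$, and both estimate $|g'(t)|$ by bounding the cosines by $1$ and then lower-bounding $|\sin(\pi t/N)|$. The only cosmetic differences are that you justify $|g|\leq 1$ explicitly via the geometric-sum interpretation (the paper simply asserts it), and that you use Jordan's inequality $\sin x \geq 2x/\pi$ while the paper uses the cruder $\sin x \geq x/2$; your intermediate constant $\pi$ is therefore sharper than the paper's $4$, which you then relax to match the stated lemma.
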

\begin{proof}
Letting $\delta=\xi_m-\xi_{m'}$, we have
\begin{align*}
&\frac{\partial L(\Xi)_{m,m'}}{\partial\xi_m} = \frac\pi{N^2}\times \frac{\iota e^{\iota\frac\pi N\delta}\sin(\pi\delta)}{\sin\left(\frac\pi N\delta\right)} \\
&+ \frac{\pi}{N}\times \frac{e^{-\iota\frac\pi N\delta}}{\sin(\frac\pi N\delta)}\left( \cos(\pi\delta)-\frac{\sin(\pi\delta)} N\times\frac{\cos(\frac \pi N\delta)}{\sin(\frac\pi N\delta)} \right) .
\end{align*}

Without loss of generality we consider the case $0\leq \delta \leq N/2$. Using $\left|\frac{\sin(\pi\delta)}{N\sin(\frac\pi N\delta)}\right| \leq 1$ let us remark that
\begin{align*}
&\left|\frac{\partial L(\Xi)_{m,m'}}{\partial\xi_m}\right| \leq \frac\pi{N} \\
&\quad + \frac\pi N\left|\frac{1}{\sin(\frac\pi N\delta)}\left(\frac{\sin(\pi\delta)\cos(\frac\pi N\delta)}{N\sin(\frac\pi N\delta)}-\cos(\pi\delta)\right)\right|.
\end{align*}

Using the inequality $\left|\frac{\sin(\pi\delta)}{N\sin(\frac\pi N\delta)}\right| \leq 1$ again, we obtain
\begin{equation*}
\left|\frac{\sin(\pi\delta)\cos(\frac\pi N\delta)}{N\sin(\frac\pi N\delta)}-\cos(\pi\delta)\right| \leq \left|\cos(\frac\pi N\delta)\right|+1 \leq 2.
\end{equation*}
Finally, using the inequality $\sin(x)\geq x/2$ for $x\in (0,\pi/2)$, we get
$\left|\frac{\partial L(\Xi)_{m',m}}{\partial\xi_{m'}}\right| \leq \frac{\pi}{N}+\frac{4}{\delta}$.
\end{proof}

Lemma~\ref{lemma:upper_bond_partial_L} and a Cauchy-Schwarz inequality provides the following bound:
\begin{equation*}
|\hat r'(\xi_m)| \leq \left( \frac{\pi}{N} + \frac{4}{\md(\Xi)}\right) \|\hat u(\Xi)\|_1.
\end{equation*}
Combining everything finally yields:
\begin{align*}
\left|\frac{\partial J_1(\Xi)}{\partial\xi_m}\right| \leq& |\hat u'(\xi_m)| \cdot \frac{\|\hat u(\Xi)\|_2}{\md(\Xi)} \\
&+ |\hat u(\xi_m)| \cdot \|\hat u(\Xi)\|_1 \cdot \left(\frac{\pi}{N}+\frac{4}{\md(\Xi)}\right).
\end{align*}

Under the decay assumptions of Theorem~\ref{thm:gradient_flatness}, we obtain
\begin{equation*}
\left|\frac{\partial J_1(\Xi)}{\partial\xi_m}\right| \lesssim \frac{\|\hat u(\Xi)\|_1}{\md(\Xi) |\xi_m|^\alpha}.
\end{equation*}

{\small
\bibliographystyle{plain}
\bibliography{biblio}
}

\end{document}